\documentclass[a4paper]{amsart}
\usepackage{amssymb} 
\usepackage{amsmath, graphicx} 
\usepackage{comment}
\usepackage{color}
\usepackage{array, booktabs, comment, mathrsfs, enumerate, multirow, soul}
\usepackage[neverdecrease]{paralist}
\usepackage[h]{esvect}
\usepackage{tikz-cd}
\usepackage{todonotes}
\usepackage{microtype}
\usepackage{url}
\usepackage[colorlinks]{hyperref}
\hypersetup{citecolor=blue}

\DeclareMathOperator{\PGL}{PGL}
\DeclareMathOperator{\PSL}{PSL}
\DeclareMathOperator{\Art}{Art}
\DeclareMathOperator{\Aut}{Aut}

\DeclareMathOperator{\N}{N}
\DeclareMathOperator{\Cl}{Cl}

\DeclareMathOperator{\GL}{GL}
\DeclareMathOperator{\Jac}{Jac}

\DeclareMathOperator{\Gal}{Gal}

\DeclareMathOperator{\Emb}{Emb}
\DeclareMathOperator{\End}{End}

\DeclareMathOperator{\ord}{ord}

\DeclareMathOperator{\Spec}{Spec}
\DeclareMathOperator{\Spf}{Spf}
\DeclareMathOperator{\GSpin}{GSpin}

\DeclareMathOperator{\Stab}{Stab}
\DeclareMathOperator{\Star}{Star}
\DeclareMathOperator{\Nrd}{Nrd}

\DeclareMathOperator{\Ind}{Ind}

\DeclareMathOperator{\Tr}{Tr}

\newcommand{\A}{{\mathbf A}}
\newcommand{\Q}{{\mathbf Q}}
\newcommand{\Z}{{\mathbf Z}}
\newcommand{\C}{{\mathbf C}}
\newcommand{\R}{{\mathbf R}}
\newcommand{\F}{{\mathbf F}}
\newcommand{\Ha}{{\mathbf H}}

\newcommand{\CO}{\mathcal{O}}

\newcommand{\gc}{{\mathfrak{c}}}

\newcommand{\gm}{\mathfrak{m}}

\newcommand{\gO}{\mathfrak{O}}
\newcommand{\gP}{\mathfrak{P}}
\newcommand{\gp}{\mathfrak{p}}
\newcommand{\gq}{\mathfrak{q}}

\newcommand{\gN}{\mathfrak{N}}

\newcommand{\Qbar}{\overline{\Q}}

\newcommand{\T}{\mathbf{T}}

\theoremstyle{plain}
\newtheorem*{thmA}{Theorem A}
\newtheorem*{thmB}{Theorem B}
\newtheorem*{thmC}{Theorem C}
\newtheorem{thm}{Theorem}[section]
\newtheorem{prop}[thm]{Proposition}

\newtheorem{cor}[thm]{Corollary}

\newtheorem{lem}[thm]{Lemma}

\newtheorem{rem}[thm]{Remark}

\setcounter{tocdepth}{2}

\title[A hyperelliptic Shimura quotient of genus $16$]{An intriguing hyperelliptic Shimura curve quotient of genus $16$} 

\begin{document}

\author{Lassina Demb\'el\'e}
\address{Department of Mathematics, University of Luxembourg, Esch-sur-Alzette L-4364, Luxembourg}
\email{lassina.dembele@gmail.com}
\thanks{The author was supported in part by EPSRC Grants EP/J002658/1 and EP/L025302/1, a Visiting grant from the Max-Planck Institute for Mathematics, 
a Simons Collaboration Grant (550029), and Luxembourg FNR Fund PRIDE/GPS/12246620.}
\date{\today}

\keywords{Abelian varieties, Hilbert modular forms, Shimura curves}
\subjclass[2010]{Primary: 11F41; Secondary: 11F80}

\maketitle

\begin{abstract} 
Let $F$ be the maximal totally real subfield of $\Q(\zeta_{32})$, the cyclotomic field of $32$nd roots of unity. Let $D$ be the
quaternion algebra over $F$ ramified exactly at the unique prime above $2$ and 7 of the real places of $F$. Let $\CO$ be a
maximal order in $D$, and $X_0^D(1)$ the Shimura curve attached to $\CO$. Let $C = X_0^D(1)/\langle w_D \rangle$, where
$w_D$ is the unique Atkin-Lehner involution on $X_0^D(1)$. We show that the curve $C$ has several striking features. First,
it is a hyperelliptic curve of genus $16$, whose hyperelliptic involution is exceptional. Second, there are $34$ Weierstrass points 
on $C$, and exactly half of these points are CM points; they are defined over the Hilbert class field of the unique CM extension $E/F$
of class number $17$ contained in $\Q(\zeta_{64})$, the cyclotomic field of $64$th roots of unity. Third, the normal closure of the field 
of $2$-torsion of the Jacobian of $C$ is the Harbater field $N$, the unique Galois number field $N/\Q$ unramified outside $2$
and $\infty$, with Galois group $\Gal(N/\Q)\simeq F_{17} = \Z/17\Z \rtimes (\Z/17\Z)^\times$. In fact, the Jacobian $\Jac(X_0^D(1))$ has
the remarkable property that each of its simple factors has a $2$-torsion field whose normal closure is the field $N$. Finally, and
perhaps the most striking fact about $C$, is that it is also hyperelliptic over $\Q$. 
\end{abstract}

\section{\bf Introduction}

Let $F$ be the maximal totally real subfield of $\Q(\zeta_{32})$, the cyclotomic field of $32$nd roots of unity. 
Recall that $2$ is totally ramified in $F$, and let $\gp$ be the unique prime above it. Let $D$ be the
quaternion algebra defined over $F$ ramified exactly at $\gp$ and 7 of the real places of $F$. Let $\CO$ be a
maximal order in $D$, and $X_0^D(1)$ the Shimura curve attached to $\CO$. Let $C = X_0^D(1)/\langle w_D \rangle$, where
$w_D$ is the unique Atkin-Lehner involution on $X_0^D(1)$. Let $\Jac(X_0^D(1))$ and $\Jac(C)$ be the Jacobians of $X_0^D(1)$
and $\Jac(C)$, respectively. In this note, we show that $C$ has several striking properties. First, we prove the following theorem 
(Theorem~\ref{thm:C-is-hyperelliptic-over-Q}).

\begin{thmA}\label{thm:thmA} 
The curve $C$ is a hyperelliptic curve of genus $16$ defined over $\Q$.
\end{thmA}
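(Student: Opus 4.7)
The plan attacks this in three parts: compute $g(C)=16$, exhibit a hyperelliptic involution, and descend the structure from $F$ to $\Q$.

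\textbf{Part 1 (Genus).} First compute $g(X_0^D(1))$ via Eichler's mass/volume formula, whose inputs are the zeta value $\zeta_F(-1)$, the ramification data of $D$, and the counts of elliptic points (optimal embeddings of small-discriminant quadratic orders into $\CO$). The fixed points of $w_D$ on $X_0^D(1)$ are the CM points associated to optimal embeddings of CM orders in the quadratic extensions of $F$ in which $\gp$ ramifies, and can be enumerated using class numbers of orders in $E = F(\sqrt{-\gp})$ and its relatives. Riemann--Hurwitz applied to $\pi: X_0^D(1) \to C$ then yields
\[
g(C) = \frac{2\,g(X_0^D(1)) + 2 - |\mathrm{Fix}(w_D)|}{4},
\]
which one checks equals $16$.

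\textbf{Part 2 (Hyperelliptic structure).} This is by far the hardest step. Since $w_D$ is the unique Atkin--Lehner involution and has already been quotiented out, a hyperelliptic involution $\iota$ on $C$ is necessarily \emph{exceptional}: it lifts to an automorphism of $X_0^D(1)$ commuting with $w_D$ but not of Atkin--Lehner origin. There is no structural reason for such an involution to exist in genus $16$, so it must be detected by explicit computation. My approach is to use an algorithm for Hilbert modular forms on $D$ (e.g., via Jacquet--Langlands transfer) to compute a basis of the $(+1)$-eigenspace of $w_D$ on parallel weight-$2$ cusp forms, which is $H^0(C,\Omega^1_C)$ of dimension $16$. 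Passing to weight $4$ (and if needed $6$) then accesses the canonical ring and the canonical map $\phi_K: C \to \PP^{15}$. By the Noether--Petri dichotomy, the multiplication map
\[
\mu: \Sym^2 H^0(C,\Omega^1_C) \longrightarrow H^0(C, (\Omega^1_C)^{\otimes 2})
\]
has rank $3g-3 = 45$ when $C$ is non-hyperelliptic and rank $2g-1 = 31$ when $C$ is hyperelliptic, so the computation of $\mathrm{rank}(\mu)$ decides the question. Once hyperellipticity is confirmed, an explicit equation $y^2=f(x)$ should be extractable from the graded ring of sections, and the $2g+2 = 34$ Weierstrass points will match the CM points highlighted in the abstract.

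\textbf{Part 3 (Descent to $\Q$).} The Galois group $\Gal(F/\Q) \cong (\Z/32\Z)^\times/\{\pm 1\}$, of order $8$, acts transitively on the real places of $F$. Since $D$ is ramified at $\gp$ (a $\Gal(F/\Q)$-invariant prime) and at $7$ of the $8$ real places, this action conjugates $D$ into its Galois twists $D^\sigma$, giving $8$ cognate Shimura curves canonically isomorphic as abstract varieties. The uniqueness of $w_D$ renders this identification compatible with the Atkin--Lehner quotient, producing a Galois descent datum on $C$; effectiveness is automatic by Weil's descent theorem since $C$ is a smooth projective curve. Concretely, one would verify descent by producing the hyperelliptic equation of Part 2 with coefficients in $\Q$ (after a linear change of variable), using a $\Gal(F/\Q)$-equivariant basis of $H^0(C,\Omega^1_C)$.
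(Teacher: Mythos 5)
There are two genuine gaps. The first is in Part~2: you correctly identify hyperellipticity as the hard step, but you do not prove it --- you only name a computation (the rank of $\mu:\Sym^2 H^0(C,\Omega^1_C)\to H^0(C,(\Omega^1_C)^{\otimes 2})$) that would decide it. Carrying this out requires the multiplicative structure of the canonical ring of a Shimura curve over a degree-$8$ totally real field, i.e.\ power series expansions of quaternionic modular forms; the paper explicitly remarks that even producing an equation for $C$ by such methods is not currently feasible and is left as future work, so your Part~2 is a research programme rather than an argument. The paper avoids the canonical ring entirely: the signature computation shows the maximal group $\Gamma_\CO$ has exactly $17$ elliptic points of order $2$ (CM by $\CO_K$, where $K/F$ is the unique CM extension of class number $17$), a local Riemann--Roch argument shows each such fixed point is a Weierstrass point, and these $17$ points lie in the Hilbert class field $H_K$, whose Galois-orbit structure ($\Gal(M/F)\simeq D_{17}$ for the normal closure $M$) forces $\#\mathscr{W}=34=2g+2$; Proposition~\ref{prop:hyperellipticity} then gives hyperellipticity. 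This counting argument is the essential idea your proposal is missing.

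The second gap is in Part~3. The effectivity you wave away is exactly the subtle point: for a curve of genus $\ge 2$ with nontrivial automorphism group (here $\Aut(C)=\Z/2\Z$), the field of moduli need not be a field of definition, and ``descent is automatic since $C$ is a smooth projective curve'' is false as stated --- Weil's criterion requires a descent datum satisfying the cocycle condition, and the ambiguity of the gluing isomorphisms by $\Aut(C)$ is precisely the obstruction. The paper resolves this by marking the curve with the CM point attached to $\Q(\zeta_{32})/F$ (rational over $F$ because $\Q(\zeta_{32})$ has class number one) and invoking the marked-descent results of Sijsling--Voight, first to descend $C$ and then, using the triviality of $\Aut(C)/\langle\iota\rangle$, to descend the hyperelliptic involution itself. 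Note also that the determination of $\Aut(C)$ --- carried out in the paper via the \v{C}erednik--Drinfel'd dual graph of the stable model and an admissibility analysis --- is an input you never supply but implicitly need, both for the descent of $\iota$ and to know that the involution you seek is unique and exceptional.
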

We first show that $C$ is hyperelliptic over $F$ (Theorem~\ref{thm:C-is-hyperelliptic}), then we apply a descent argument from~\cite{sv16} 
to show that both the curve and the hyperelliptic involution are defined over $\Q$. For the first part, we simply count the number of Weierstrass points on $C$.
This count yields that $C$ has $34$ Weierstrass points, the maximum number for a hyperelliptic curve of genus $16$ by the Weierstrass gap theorem. 
Half of those Weierstrass points are CM points defined over the Hilbert class field of the  unique CM extension $E/F$ of class number $17$ contained in 
$\Q(\zeta_{64})$, the cyclotomic field of $64$th roots of unity. 

To show that $C$ is in fact defined over $\Q$, we determine the automorphism group $\Aut(C)$ of $C$ as a curve over $F$. We do this by exploiting the 
\v{C}erednik-Drinfel'd $2$-adic uniformisation of $X_0^D(1)$ and the fact that the automorphism group of a stable curve injects into an {\it admissible} subgroup 
of the automorphism group of its dual graph (see~\cite{dm69} and \S \ref{subsec:automorphism-group} for the definition of admissibility). A careful study of the 
dual graph of the stable model of $C$ over the completion of $F$ at $\gp$ then yields that $\Aut(C) = \Z/2\Z$.  As a result, we get that the {\it only} non-trivial 
automorphism of $C$ is the hyper\-el\-lip\-tic involution, which in this case must be exceptional since the curve $C$ is obtained as the quotient of $X_0^D(1)$ 
by the unique Atkin-Lehner involution $w_D$. 


Our second result concerns the field of $2$-torsion of $\Jac(C)$. It is known that $17$ is the smallest odd integer which can occur as the 
degree of a number field $K/\Q$ for which $2$ is the only finite prime which ramifies. That there is no such integer less than $17$
follows from work of Jones~\cite{jon10}. On the other hand, Harbater~\cite{har94} proves that there is a unique Galois number field $N/\Q$
unramified outside $2$ and $\infty$, with Galois group $\Gal(N/\Q) \simeq F_{17} = \Z/17\Z \rtimes (\Z/17\Z)^\times$. So, the fixed field of
the Sylow $2$-subgroup of $F_{17}$ is a number field of degree $17$ in which $2$ is the only ramified finite prime. Noam Elkies provides a degree $17$ 
polynomial whose splitting field is $N$. The computation which led to that polynomial stemmed from a discussion on \verb|mathoverflow.net|~\cite{er15}
initiated by Jeremy Rouse. In the context of that discussion, it is natural to ask whether there is a curve defined over $\Q$, with good reduction 
away from $2$, whose field of $2$-torsion is the Harbater field $N$. The following theorem provides an affirmative answer to that question 
(Theorem~\ref{thm:harbater-field}).
\begin{thmB} The field  of $2$-torsion of $\Jac(C)$ is the Harbater field $N$.
\end{thmB}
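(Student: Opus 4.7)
The aim is to identify the Galois closure $M$ of $\Q(\Jac(C)[2])$ over $\Q$ with $N$. By Harbater's uniqueness theorem, it suffices to verify that $M/\Q$ is unramified outside $\{2,\infty\}$ and that $\Gal(M/\Q) \simeq F_{17}$. The unramified statement is immediate: since $D$ has reduced discriminant $\gp$, the Shimura curve $X_0^D(1)$ and its quotient $C$ have good reduction away from $\gp$, and combined with Theorem~A the curve $C/\Q$ has good reduction at every odd rational prime. By N\'eron-Ogg-Shafarevich, $\Jac(C)[2]$ is an unramified $G_{\Q_p}$-module for all $p \neq 2$, hence $M/\Q$ is unramified outside $\{2,\infty\}$.

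To identify $\Gal(M/\Q)$, the plan is to decompose $\Jac(C)$ over $\Qbar$ via the Jacquet-Langlands correspondence and analyse the mod-$2$ Galois representations of its simple factors. There is an isogeny $\Jac(X_0^D(1)) \otimes \Qbar \sim \prod_{[f]} A_f$ where $[f]$ ranges over $\Gal(\Qbar/\Q)$-conjugacy classes of Hilbert modular newforms of parallel weight $2$ and level $\gp$ over $F$, with $\dim A_f = [\Q(f):\Q]$; the factors of $\Jac(C)$ correspond, via Jacquet-Langlands, to those $f$ with $w_D$-eigenvalue $+1$. I would enumerate these forms and their Hecke eigenvalue fields $\Q(f)$ using existing algorithms for Hilbert modular forms over totally real fields of higher degree. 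For each prime $\lambda$ of $\Q(f)$ above $2$, the attached $2$-adic Galois representation $\rho_{f,\lambda}\colon G_F \to \GL_2(\OO_{\Q(f),\lambda})$ reduces to $\bar\rho_{f,\lambda}$ describing the $G_F$-action on $A_f[\lambda]$; computing enough Hecke eigenvalues $a_\gq(f) \bmod \lambda$ at primes $\gq$ of good reduction pins down the image of each $\bar\rho_{f,\lambda}$. The key arithmetic input enabling the identification is that $F_{17}$ embeds in $\GL_2(\F_{16})$, since $17 \mid 16^2 - 1$; each image is expected to be (conjugate to) this copy of $F_{17}$. Combining the $\bar\rho_{f,\lambda}$ across all relevant $f$, together with the permutation action of $\Aut(F/\Q)$ on the forms (required to descend from $G_F$- to $G_\Q$-representations and assemble a $\Q$-rational $\Jac(C)[2]$), would yield $\Gal(M/\Q) \simeq F_{17}$, whence $M = N$ by Harbater's uniqueness.

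The main obstacle is twofold. First, the computational burden of enumerating Hilbert modular newforms of level $\gp$ on the degree-$8$ totally real field $F$, and of extracting enough Hecke data modulo $\lambda$ to determine the mod-$2$ representations precisely, is substantial; explicit computer algebra is essential here. Second, one must carefully descend from the $G_F$-representations on the individual simple factors $A_f$ to a single $G_\Q$-representation on $\Jac(C)[2]$, since the $A_f$ are typically defined only over $F$ (or a subfield) and only assemble into a $\Q$-rational object after grouping Galois conjugates or taking Weil restrictions. Keeping track of this descent correctly is what ensures the final Galois group is exactly $F_{17}$ rather than some larger group containing it, and hence is what permits the clean application of Harbater's uniqueness.
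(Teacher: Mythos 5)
Your overall architecture (good reduction outside $2$, identification of the Galois group, Harbater's uniqueness theorem) is sound, and your route through the Jacquet--Langlands decomposition and the mod $2$ representations of the simple factors is in principle viable --- it is essentially how the paper proves the stronger Theorem C. But it is not how the paper proves Theorem B, and your version breaks at the decisive step. The group $F_{17}=\Z/17\Z\rtimes(\Z/17\Z)^{\times}$ does \emph{not} embed in $\GL_2(\F_{16})$: any subgroup of order $17$ lies in a non-split Cartan subgroup (cyclic of order $255$), which is its own centralizer, and the normalizer of that Cartan induces only $\{\pm1\}\subset(\Z/17\Z)^{\times}$ on the order-$17$ subgroup, so no subgroup of $\GL_2(\F_{16})$ realizes the faithful action of $\Z/16\Z$ on $\Z/17\Z$. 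Accordingly the image of each $\bar{\rho}_{f,\lambda}\colon \Gal(\Qbar/F)\to\GL_2(\F_{16})$ is not $F_{17}$ but the dihedral group $D_{17}$ of order $34$: these representations are induced from unramified order-$17$ characters of $\Gal(\Qbar/K)$, where $K$ is the CM field of class number $17$, so each cuts out only the Hilbert class field $H_K$ (Proposition~\ref{prop:dihedral-rep}). The group $F_{17}$ emerges only upon descent to $\Q$, via the twisted compatibilities ${}^{\sigma}\!f=f'$ and ${}^{\sigma^2}\!f=f^{\tau}$, which make $\Gal(F/\Q)\simeq\Z/8\Z$ act on the $\Z/17\Z$ through the remaining automorphisms; since the extension $1\to D_{17}\to F_{17}\to\Z/8\Z\to 1$ is non-split, $F_{17}$ cannot be seen inside any single $\GL_2(\F_{16})$-valued representation. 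You do flag the descent as the delicate point, but the mechanism you propose for producing $F_{17}$ cannot work, so $\Gal(M/\Q)\simeq F_{17}$ is not established. A secondary gap: the isogeny $\Jac(C)\sim\prod A_f$ may have $2$-power degree and so need not preserve $2$-torsion fields; even with all the $\bar{\rho}_{f,\lambda}$ in hand you must argue that their compositum equals $\Q(\Jac(C)[2])$ (the paper does this with the observation that $F_{17}$ has no nontrivial normal subgroup of order dividing $16$).

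For comparison, the paper's proof of Theorem B bypasses modular forms entirely. Since $C$ is hyperelliptic, $\Jac(C)[2]$ is generated by differences of Weierstrass points; seventeen of the $34$ Weierstrass points are CM points defined, by Shimura reciprocity, over the Hilbert class field $H_K$, and the Galois action on all of $\mathscr{W}$ factors through $\Gal(M/F)\simeq D_{17}$, where $M$ is the normal closure of $H_K$ over $F$. The field of $2$-torsion over $\Q$ is therefore the normal closure $N$ of $M$, which a direct computation shows has $\Gal(N/\Q)\simeq F_{17}$ and which is by construction unramified outside $\{2,\infty\}$; Harbater's uniqueness theorem then identifies $N$ with the Harbater field.
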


The fact that the Harbater field can be realised as the field of $2$-torsion of a {\it hyperelliptic} curve of rather large genus, with good reduction outside $2$,
seems rather remarkable to us. For that reason, we think that it would be very interesting to find a defining equation for $C$ over $\Q$. This is a question
of independent interest that we hope to consider in the future. 

In fact, we prove a slightly stronger result than Theorem~B. Namely, up to isogeny, the Jacobian $\Jac(X_0^D(1))$ decomposes as the product of four
abelian varieties of dimension $4$ and one of dimension $24$. We give two different proofs of the following (Theorem~\ref{thm:2-torsion-fields}). 

\begin{thmC} Let $A$ be a simple factor of $\Jac(X_0^D(1))$. Then the normal closure of the field of $2$-torsion of $A$
is the Harbater field $N$.
\end{thmC}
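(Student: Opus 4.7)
The plan is to identify, for each simple factor $A$ of $\Jac(X_0^D(1))$, the image of the mod-$2$ Galois representation on $A[2]$ and then show that the normal closure over $\Q$ of its fixed field coincides with the Harbater field $N$. The key overall reduction is this: since $X_0^D(1)$, and hence each simple factor $A$, has good reduction outside $\gp$, the representation of $G_F$ on $A[2]$ is unramified outside $\gp$ and $\infty$. Consequently, the normal closure over $\Q$ of $F(A[2])$ is a finite Galois extension of $\Q$ unramified outside $2$ and $\infty$, and Harbater's theorem guarantees that any such extension whose Galois group is isomorphic to $F_{17}$ coincides with $N$. The whole task therefore reduces to identifying the Galois group of the normal closure of $F(A[2])$ over $\Q$ as $F_{17}$.

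For the \emph{first proof}, I would use the Jacquet--Langlands correspondence to match the isogeny decomposition of $\Jac(X_0^D(1))$ with a decomposition of the space of Hilbert newforms on $\GL_2/F$ of parallel weight $2$ and level $\gp$ into Galois orbits. Each simple factor $A_f$ corresponds to a newform $f$ with coefficient field $K_f$, and the $2$-torsion $A_f[2]$, viewed as a module over $\CO_{K_f}\otimes\F_2$, decomposes as $\bigoplus_{\gQ\mid 2}A_f[\gQ]$, on which $G_F$ acts through representations $\bar\rho_{f,\gQ}\colon G_F\to\GL_2(\F_\gQ)$ with $\tr\bar\rho_{f,\gQ}(\Frob_\gq)\equiv a_\gq(f)\pmod{\gQ}$. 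Compute enough Hecke eigenvalues $a_\gq(f)$ via quaternionic algorithms, reduce modulo each prime $\gQ\mid 2$ of $K_f$, and combine the resulting fields across Galois conjugates $f^\sigma$ (for $\sigma\in\Gal(F/\Q)$) to pin down the compositum over $\Q$ as $N$ with Galois group $F_{17}$.

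For the \emph{second proof}, I would bootstrap from Theorem~B. The decomposition $\Jac(X_0^D(1))\sim\Jac(C)\oplus B$ into $(\pm 1)$-eigenspaces of $w_D$ matches the numerical decomposition $40=16+24$: the four $4$-dimensional simple factors together constitute $\Jac(C)$, while $B$ is the $24$-dimensional simple factor. Theorem~B immediately places the normal closure over $\Q$ of $F(A_i[2])$ inside $N$ for each $4$-dimensional factor $A_i$; a handful of Frobenius-trace computations should show that $\bar\rho_{f_i,\gQ}(G_F)$ contains an element of order $17$, from which it follows that the Galois closure cannot lie in any proper subfield of $N$. For the $24$-dimensional factor $B$, I would either redo the analysis on the $w_D=-1$ eigensubspace of the quaternionic Hecke module, or identify $\bar\rho_{f_B,\gQ}$ with a Galois twist of one of the $\bar\rho_{f_i,\gQ'}$, transferring the result from the $w_D=+1$ side.

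The \emph{main obstacle} is explicit control of the quaternionic Hecke algebra: identifying the Galois orbits of newforms of level $\gp$, their coefficient fields $K_f$, and the residue degrees at primes $\gQ\mid 2$. Establishing that $\F_\gQ$ contains a primitive $17$th root of unity (equivalently, that the residue degree is a multiple of $8$, the multiplicative order of $2$ modulo $17$) is necessary even to embed $F_{17}$ inside $\GL_2(\F_\gQ)$, and ruling out that the global image collapses to a proper quotient of $F_{17}$ requires Frobenius-trace data across sufficiently many primes. This is where the heavy computations enter, and verifying their correctness is the crux of the argument.
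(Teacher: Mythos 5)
Your high-level reduction (Harbater's uniqueness theorem plus identification of the Galois group as $F_{17}$) matches the paper's, and your bootstrap from Theorem~B handles the four $4$-dimensional factors essentially as the paper's first proof does (including the observation that a subgroup of $F_{17}$ corresponding to a normal subextension of degree divisible by $17$ must be all of $F_{17}$). But there is a genuine gap at the step you leave as an ``either/or'': the $24$-dimensional factor $A_h$. Your first option --- redoing the analysis on the $w_D=-1$ eigenspace --- does not go through, because the proof of Theorem~B rests on the Weierstrass/CM-point description of the $2$-torsion of $\Jac(C)$ for the hyperelliptic curve $C$, and there is no such curve whose Jacobian is $A_h$. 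Your second option --- identifying the mod~$2$ representation attached to $h$ with a conjugate of one coming from the $w_D=+1$ side --- is exactly the paper's route, but it is the crux of the whole argument and cannot be read off from a few Frobenius traces: one must prove an actual mod~$2$ congruence between $h$ and $f,g$. The paper does this in Proposition~\ref{prop:mod-2-constituents} by a multiplicity-one argument on the socle of the mod~$2$ Hecke module, showing that all five newforms give rise to only two mod~$2$ eigensystems $\theta,\theta'$, swapped by $\Gal(F/\Q)$. Without that input your argument only covers the factors of $\Jac(C)$.

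Two further corrections. First, your criterion for an order-$17$ element is wrong: you do not need $\F_\gQ$ to contain the $17$th roots of unity. Such an element may lie in a non-split torus of $\GL_2(\F_{2^s})$, so it suffices that $17$ divide $2^{2s}-1$, i.e.\ residue degree $4$; indeed the paper's representations land in $\GL_2(\F_{16})$ with image the dihedral group $D_{17}$. Relatedly, $F_{17}$ is never the image of $\bar{\rho}$ (a representation of $\Gal(\Qbar/F)$); the image is $D_{17}$, and the $\Z/8\Z$ quotient is contributed by $\Gal(F/\Q)$. Second, the paper's second proof supplies the conceptual mechanism your sketch lacks: each $\bar{\rho}_\theta$ is induced from an unramified character of the CM field $K$ of class number $17$ (Proposition~\ref{prop:dihedral-rep}), so the $2$-torsion fields are the Hilbert class field of $K$ and the normal closure over $\Q$ has group $D_{17}\rtimes\Z/8\Z \simeq F_{17}$. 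Incorporating either the congruence argument or this dihedral description is necessary to turn your outline into a proof.
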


The second proof of Theorem~C uses congruences. Namely, let $S_2^D(1)$ be the space of automorphic forms of level $(1)$ and weight $2$ 
over the quaternion algebra $D$, and $\T$ be the Hecke algebra acting on $S_2^D(1)$. We show that there are two congruence classes modulo 
$2$ among the newforms in $S_2^D(1)$, whose associated mod $2$ residual Galois representations have the same  image $D_{17}$. These two 
congruence classes are permuted by $\Gal(F/\Q)$. As a result, we get that the normal closure of the field of $2$-torsion of every simple factor of 
$\Jac(X_0^D(1))$ is the Harbater field $N$. Interestingly, the existence of these two {\it distinct} congruence classes modulo $2$ turns out to have
the following amusing consequence: The connectedness of $\Spec(\T)$, which is obtained by an argument \`a la Mazur~\cite[Proposition 10.6]{maz77}, 
{\it cannot} arise from a single congruence modulo $2$. In other words, the existence of the Harbater field as the normal closure of the field of $2$-torsion 
of $\Jac(X_0^D(1))$ is an obstruction to the connectedness of $\Spec(\T)$ being achieved via a unique congruence modulo $2$. This is due to the
tautological reason that the semi-direct product $F_{17} = D_{17}\rtimes \Z/8\Z$ is {\it non-split}. In fact, we show that the connectedness of $\Spec(\T)$ 
is given by two different congruences modulo $3$ and $5$. 

Our initial interest in the curve $X_0^D(1)$ stems from a conjecture of Benedict H. Gross which states that, for any prime $p$, 
there is a non-solvable number field $K/\Q$ ramified at $p$ (and possibly at $\infty$) only. In~\cite{dem09}, we proved that conjecture for $p = 2$ by using 
Hilbert modular forms of level $(1)$ and weight $2$ over $F$. Theorem C implies that none of the simple factors of $\Jac(X_0^D(1))$ has a $2$-torsion 
field that can be used to provide an affirmative answer to the Gross conjecture for number fields given that $N$ is solvable. Amusingly, it turns out that the 
simple factors of $\Jac(X_0^D(1))$ are more interesting in relation to other conjectures of Gross~\cite{gro16} which concern modularity of abelian varieties 
not of $\GL_2$-type. Indeed, functorially, these simple factors are related to abelian varieties defined over $\Q$ with {\it small} or even trivial endomorphism 
rings, but which acquire extra endomorphisms over $F$, as we explain later (see also~\cite{cd17}).

The outline of the paper is as follows. In Section~\ref{sec:weierstrass-points}, we recall the necessary background on Weierstrass points and
hyperellipticity. In Section~\ref{sec:fuchsian-groups}, we recall the necessary background on arithmetic groups in quaternion algebras, and
compliment this by discussing optimal embeddings into maximal arithmetic Fuchsian groups. In Section~\ref{sec:shimura-curves}, we review
the theory of Shimura curves, especially their $p$-adic uniformisation. Finally, in Sections~\ref{sec:gross-curve}
and~\ref{sec:2-torsion-fields}, we discuss our example, its Jacobian and the connection of their $2$-torsion fields with the Harbater field.

\medskip
\noindent
{\bf Acknowledgements.} I would like to thank Frank Calegari for several helpful email exchanges; Vladimir Dokchitser and C\'eline
Maistret for some useful discussion; and Jeroen Sijsling for carefully reading an earlier draft of this work. I would also like to give a special
thanks to John Voight as this note owes a lot to the lengthy discussions I had with him on this topic. I learned of the discussion about 
the Harbater field between Jeremy Rouse and Noam D. Elkies from David P.~Roberts who pointed us to the \verb|mathoverflow.net| post 
related to this. So, I would like to thank him for this. During the course of this project, I stayed at the following institutions: Dartmouth 
College, King's College London, the Max-Planck Institute for Mathematics in Bonn, and the University of Barcelona; I would like to thank 
them for their generous hospitality. I also thank the referees for many helpful suggestions. Finally, as alluded to earlier, this note originated 
with a question of Benedict Gross. So I would like to thank him for this, and for his constant encouragement.

\section{\bf Background on Weierstrass points}\label{sec:weierstrass-points}
Throughout this section, $X$ is a smooth projective curve of genus $g \ge 2$ defined over a field $k$ of characteristic $0$,
with algebraic closure $\overline{k}$.

\subsection{Definition and properties}
Let $P$ be a point on $X$. We say that $P$ is a {\it Weierstrass point} if there exists a differential form $\omega \in H^0(X, \Omega^1_X)$
such that $\ord_P(\omega) \ge g$. We let $\mathscr{W}$ be the set of all Weierstrass points on $X(\overline{k})$. 
Alternatively, one can describe $\mathscr{W}$ as follows. Let $D$ be a divisor on $X$, and $\mathscr{L}(D)$ the Riemann-Roch space
associated to $D$, i.e.
$$\mathscr{L}(D) := \{f \in k(X)^\times: \mathrm{div}(f) + D \ge 0\}\cup \{0\}.$$ 
By the Riemann-Roch Theorem, $\mathscr{L}(D)$ is finite dimensional, and we let $\ell(D)$ be its dimension. 

\begin{prop} Let $P$ be a point on $X$. Then, $P \in \mathscr{W}$ if and only if $\ell(gP) \ge 2$. 
\end{prop}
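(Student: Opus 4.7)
The plan is to prove both directions by invoking the Riemann--Roch theorem for the divisor $D = gP$, and by translating the condition ``there exists a holomorphic differential vanishing to order at least $g$ at $P$'' into a statement about the dimension of a Riemann--Roch space associated to $K - gP$, where $K$ is a fixed canonical divisor.

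First, I would apply Riemann--Roch to the divisor $gP$. Since $\deg(gP) = g$, this gives
\[
\ell(gP) - \ell(K - gP) = g - g + 1 = 1,
\]
so the condition $\ell(gP) \ge 2$ is equivalent to $\ell(K - gP) \ge 1$. The task is then reduced to showing that this latter condition is equivalent to the existence of a differential $\omega \in H^0(X, \Omega^1_X)$ with $\ord_P(\omega) \ge g$.

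Next, I would fix a nonzero differential $\omega_0 \in H^0(X, \Omega^1_X)$ and let $K = \divisor(\omega_0)$, which is a canonical divisor of degree $2g - 2$. The map $f \mapsto f\omega_0$ gives an isomorphism $\mathscr{L}(K) \xrightarrow{\sim} H^0(X, \Omega^1_X)$, under which $\divisor(f\omega_0) = \divisor(f) + K$. Hence a nonzero $f \in \mathscr{L}(K - gP)$ corresponds to a nonzero differential $\omega = f\omega_0$ satisfying
\[
\divisor(\omega) = \divisor(f) + K \ge gP,
\]
i.e.\ $\ord_P(\omega) \ge g$, and conversely. This furnishes a bijection between $\mathscr{L}(K - gP) \setminus \{0\}$ and the set of nonzero differentials vanishing to order at least $g$ at $P$. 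Combining this with the Riemann--Roch identity above gives the required equivalence.

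There is no real obstacle here: the argument is a textbook application of Riemann--Roch, and the only mildly subtle point is the identification of $H^0(X, \Omega^1_X)$ with $\mathscr{L}(K)$, which requires choosing a base differential $\omega_0$ but is independent of the choice up to isomorphism. The proposition is essentially the starting point for the whole theory of Weierstrass points, and in particular for the gap sequence interpretation used later in the paper to count the Weierstrass points on $C$.
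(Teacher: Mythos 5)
Your argument is correct and is precisely the standard Riemann--Roch computation that the paper invokes by citation (the paper's proof is a one-line reference to \cite[\S A.4]{hs00}); you have simply written out the details of that same approach. The identification of $\mathscr{L}(K-gP)$ with differentials vanishing to order at least $g$ at $P$, combined with $\ell(gP)-\ell(K-gP)=1$, is exactly the intended reasoning.
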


\begin{proof} This is a consequence of the Riemann-Roch Theorem~\cite[\S A.4]{hs00}.
\end{proof}

The {\it gap sequence} associated to a Weierstrass point $P$ is the set
$$G(P) := \{ n \in \Z_{\ge 0} : \ell(nP) = \ell((n-1)P)\}.$$
The {\it weight} of the Weierstrass point $P$ is defined by
$$w(P) := \left(\sum_{n \in G(P)} n\right) - \frac{g(g+1)}{2}.$$

\begin{thm} Let $P$ be a point on $X$. Then $P$ is a Weierstrass point if and only if $w(P) \ge 1$, and 
$\sum w(P)P$ belongs to the complete linear system 
$$\left|\frac{g(g+1)}{2} K_X\right|,$$
where $K_X$ is a canonical divisor on $X$. 
In particular, we have that 
$$\sum_{P \in \mathscr{W}}w(P) = g(g^2 - 1).$$
\end{thm}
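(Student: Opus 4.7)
The plan is to establish both assertions via the classical Wronskian of a basis of holomorphic differentials.

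First I would observe that the equivalence $P \in \mathscr{W} \iff w(P) \geq 1$ follows purely combinatorially from Riemann-Roch. Since $\ell(0) = 1$, $\ell((n+1)P) - \ell(nP) \in \{0,1\}$, and $\ell(nP) = n-g+1$ for $n \geq 2g-1$, the gap set $G(P)$ always has exactly $g$ elements, all lying in $\{1, \ldots, 2g-1\}$. For a non-Weierstrass point one has $\ell(gP) = 1$, which forces $G(P) = \{1, \ldots, g\}$ and hence $w(P) = 0$. Conversely, if $w(P) \geq 1$ then $G(P)$ contains an integer exceeding $g$, so some $n \leq g$ is a non-gap; by monotonicity this gives $\ell(gP) \geq 2$, and $P$ is a Weierstrass point.

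For the divisorial statement, I would fix a basis $\omega_1, \ldots, \omega_g$ of $H^0(X, \Omega^1_X)$ and, at each point $P$, choose a local parameter $z$ and write $\omega_i = f_i(z)\,dz$. Form the Wronskian
\[
W(\omega_1, \ldots, \omega_g) := \det\left(\frac{d^{\,i-1} f_j}{dz^{\,i-1}}\right)_{1 \leq i,j \leq g}.
\]
A direct chain-rule computation shows that under a change of local parameter $z \mapsto \tilde z$ the Wronskian transforms by $(dz/d\tilde z)^{1+2+\cdots+g}$, so $W$ patches into a global section of $(\Omega^1_X)^{\otimes g(g+1)/2}$. This section is not identically zero because $\omega_1, \ldots, \omega_g$ are linearly independent over $k$.

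The heart of the argument is the local identity $\ord_P(W) = w(P)$. I would prove it by choosing a basis at $P$ adapted to the filtration of $H^0(X, \Omega^1_X)$ by order of vanishing at $P$, so that $\omega_i$ vanishes to order $n_i - 1$ with $G(P) = \{n_1 < \cdots < n_g\}$. Expanding each $f_i$ as a power series in $z$ and computing the determinant term by term, the lowest-order contribution comes from the identity permutation and has degree $\sum_i (n_i - i) = \sum n_i - \frac{g(g+1)}{2} = w(P)$; a Vandermonde-type identity in the gap integers ensures that the coefficient of this leading term is nonzero. Granting this, the zero divisor of $W$ is $\sum_P w(P)P$, which therefore represents $\frac{g(g+1)}{2} K_X$ in $\Pic(X)$. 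Taking degrees yields
\[
\sum_{P \in \mathscr{W}} w(P) \;=\; \tfrac{g(g+1)}{2}\cdot (2g-2) \;=\; g(g^2 - 1).
\]

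The main obstacle is the local Wronskian computation: one must verify both the exact order of vanishing and the nonvanishing of the leading coefficient, which is where the combinatorics of the gap sequence enters. Everything else reduces to routine Riemann-Roch and the change-of-variables formula for differentials.
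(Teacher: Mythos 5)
Your argument is correct: the paper itself does not prove this theorem but simply cites \cite[\S III.5]{fk92} and \cite[Exercise A.4.14]{hs00}, and the Wronskian proof you outline (combinatorics of the gap sequence via Riemann--Roch, the Wronskian as a section of $(\Omega^1_X)^{\otimes g(g+1)/2}$, and the local computation $\ord_P(W) = w(P)$ with the falling-factorial/Vandermonde determinant giving the nonzero leading coefficient) is precisely the standard argument given in those references, valid here since $\mathrm{char}(k)=0$. No gaps.
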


\begin{proof} See~\cite[\S III.5]{fk92} or~\cite[Exercise A.4.14]{hs00}.
\end{proof}

\subsection{Hyperellipticity} We recall that $X$ is a {\it hyperelliptic} curve if there is a degree
$2$ map $\phi:\,X \to \mathbf{P}^1$ defined over $\overline{k}$. In that case, $\phi$ is unique (up to automorphisms of 
$\mathbf{P}^1$).  The map $\phi$ induces a degree $2$ extension $\overline{k}(X)/\overline{k}(\mathbf{P}^1)$, which is Galois since 
$\mathrm{char}(k) = 0$. So, this gives rise to a map $\iota:\, X \to X$ called the {\it hyperelliptic involution}. We say 
$X$ is hyperelliptic over $k$ if $\phi$ is defined over $k$. The following is a well-known classical result. 

\begin{prop}\label{prop:hyperellipticity} Let $X$ be a curve of genus $g\ge 2$ defined over a field $k$ of
characteristic $0$, and $\mathscr{W}$ the set of Weierstrass points of $X(\overline{k})$. Then, we have
$$ 2g + 2 \le \#\mathscr{W} \le g^3 - g.$$
Furthermore, $X$ is hyperelliptic if and only if $\#\mathscr{W} = 2g + 2$. In that case, the branch points are the Weierstrass
points.
\end{prop}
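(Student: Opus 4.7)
The plan is to exploit the weight identity $\sum_{P\in\mathscr{W}}w(P)=g(g^2-1)$ from the preceding theorem together with uniform bounds on the weight at a single point. First, I would invoke the Weierstrass gap theorem, a direct application of Riemann--Roch: for every $P\in X(\overline{k})$ the gap sequence $G(P)$ is a subset of $\{1,2,\ldots,2g-1\}$ of cardinality $g$, always containing $1$, and the non-gaps form a numerical semigroup. The crucial combinatorial input is that if $n_1<\cdots<n_g$ are the gaps then $n_i\le 2i-1$: indeed, if $s<n_i$ is a non-gap then $n_i-s$ must be a gap (otherwise $n_i=s+(n_i-s)$ would be a non-gap), which pairs the $(n_i-i)$ non-gaps in $[1,n_i-1]$ injectively with the $(i-1)$ gaps there. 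Summing $n_i\le 2i-1$ yields $\sum_{n\in G(P)}n\le g^2$ with equality iff $G(P)=\{1,3,\ldots,2g-1\}$; combined with the trivial lower bound $\sum_{n\in G(P)}n\ge g(g+1)/2$, this gives $0\le w(P)\le g(g-1)/2$ for every point, with the upper bound attained iff $G(P)=\{1,3,\ldots,2g-1\}$.

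Feeding these bounds into the weight formula then produces both inequalities simultaneously: $\#\mathscr{W}\le \sum_{P\in\mathscr{W}}w(P)=g^3-g$ from $w(P)\ge 1$, and $\#\mathscr{W}\cdot g(g-1)/2\ge \sum_{P\in\mathscr{W}}w(P)=g^3-g$, i.e.\ $\#\mathscr{W}\ge 2g+2$, from $w(P)\le g(g-1)/2$. Moreover, equality $\#\mathscr{W}=2g+2$ forces $w(P)=g(g-1)/2$ at every Weierstrass point, which in turn means $G(P)=\{1,3,\ldots,2g-1\}$ there.

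Finally, for the equivalence with hyperellipticity: if $G(P)=\{1,3,\ldots,2g-1\}$ at some $P$, then $2$ is a non-gap, so $\ell(2P)\ge 2$ and any non-constant $f\in\mathscr{L}(2P)$ yields a degree-$2$ morphism $X\to\mathbf{P}^1$, so $X$ is hyperelliptic. Conversely, if $\phi:X\to\mathbf{P}^1$ has degree $2$, Riemann--Hurwitz produces exactly $2g+2$ branch points, and at each branch point $P$ the pullback of a rational function with a simple pole at $\phi(P)$ shows that $2$ is a non-gap; since the non-gaps then contain all positive even integers, $G(P)=\{1,3,\ldots,2g-1\}$ and $w(P)=g(g-1)/2$. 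These $2g+2$ branch points already account for $(2g+2)\cdot g(g-1)/2=g^3-g$ of the total weight, so they exhaust $\mathscr{W}$, proving the final assertion that the branch locus coincides with $\mathscr{W}$. The only non-tautological step is the combinatorial upper bound $\sum_{n\in G(P)}n\le g^2$ on a single gap sequence (which one could alternatively cite from~\cite[\S III.5]{fk92}); once this is in hand, everything else reduces to routine Riemann--Roch and Riemann--Hurwitz bookkeeping.
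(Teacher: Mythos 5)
Your proof is correct and is essentially the standard argument from the references the paper cites ([FK92, \S III.5], [HS00, Exercise A.4.14]); the paper itself gives no argument beyond those citations. The combinatorial bound $n_i\le 2i-1$ on the gap sequence, the resulting bounds $1\le w(P)\le g(g-1)/2$ fed into $\sum_P w(P)=g^3-g$, and the Riemann--Hurwitz count in the hyperelliptic case are exactly the classical route, and all steps check out.
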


\begin{proof}
See~\cite[\S III.5]{fk92} or~\cite[Exercise A.4.14]{hs00}.
\end{proof}

\subsection{Galois action}\label{subsec:galois-action}

Let $\mathscr{W}$ be the set of all Weierstrass points over $X(\overline{k})$, then $\mathscr{W}$ is preserved by the action of 
$\Gal(\overline{k}/k)$. In particular, when $X$ is a hyperelliptic curve, this action factors through the symmetric group $S_{2g + 2}$.

\section{\bf Arithmetic Fuchsian groups}\label{sec:fuchsian-groups}

From now on, $F$ is a totally real number field of degree $g$. We denote the real embeddings of $F$ by $v_1,\,\ldots,\,v_g$. 
We let $\CO_F$ be the ring of integers of $F$, and $\CO_F^{\times +}$ the group of totally positive units in $\CO_F$. 
We let $D$ be a quaternion algebra defined over $F$, and fix a maximal order $\CO$
in $D$. Let $v$ be a place of $F$, and $F_v$ the completion of $F$ at $v$. We recall that $D$ is said to be ramified at $v$ if 
$D_v = D \otimes F_v$ is a division quaternion algebra. We let $S_\infty$ (resp. $S_f$) be the set of archimedian places 
(resp. finite places) where $D$ is ramified; and set $S = S_\infty \cup S_f$. We let $r = \#S_f$. 

\subsection{Fuchsian groups} From now on, we assume that $D$ is ramified at all but one archimedian places; namely, that 
$S_\infty =\{v_2,\,\ldots,v_g\}$. This means that, we have $D \otimes \R \simeq \mathrm{M}_2(\R) \times \Ha^{g-1}$, where 
$\Ha$ is the Hamilton quaternion algebra over $\R$. We let $j_1:\,D\otimes_{v_1}\R \to \mathrm{M}_2(\R)$ be the 
projection onto the factor corresponding to $v_1$. We will also denote the map induced on the unit groups by 
$j_1:\, (D\otimes_{v_1}\R)^\times \to \GL_2(\R)$. For the definition of the reduced norm $\Nrd: D \to F$ below, 
we refer to \cite[Chap. I, \S 1]{vig80} or \cite[\S 3.3]{voi18}. We let
 \begin{align*}
\CO^1 &:= \left\{x \in \CO: \Nrd(x) = 1\right\};\\
\CO^\times &:= \left\{x \in \CO: \Nrd(x) \in \CO_F^\times \right\};\\
\CO_{+}^\times &:= \left\{x \in \CO: \Nrd(x) \in \CO_F^{\times +} \right\}.
\end{align*}
We recall that the {\it normaliser} of $\CO$ inside $D$ is defined by
$$N_D(\CO) := \left\{ x \in D^\times: \, x\CO = \CO x \right\}.$$
We set
$$N_D(\CO)_{+} := \left\{ x \in N_D(\CO) : \Nrd(x) \in F_{+}^\times \right\}.$$
We let $\Gamma^1$ (resp. $\Gamma$ and $\Gamma_\CO$) be the image of $\CO^1$ (resp. $\CO_{+}^\times$ and $N_D(\CO)_{+}$) 
in $\PGL_2^+(\R) := \GL_2^+(\R)/\R^\times$ via $j_1$, where $$\GL_2^+(\R) := \left\{\gamma \in \GL_2(\R): \det(\gamma) > 0 \right\}.$$
We will also use the same notation to identify these groups with their respective images in $D^\times/F^\times$.
We recall that $\Gamma^1$ is an {\it arithmetic Fuchsian group}, i.e. a discrete subgroup of $\PSL_2(\R)$. 
The {\it commensurability class} of $\Gamma^1$, consists of all the subgroups $\Gamma' \subset \PGL_2^{+}(\R)$ that are
commensurable with $\Gamma^1$, i.e. such that $\Gamma' \cap \Gamma^1$ has finite index in both $\Gamma'$ and $\Gamma^1$.
Any Fuchsian group that is commensurable to an arithmetic Fuchsian group is itself arithmetic. So, the commensurability class of
$\Gamma^1$ is independent of the embedding $j_1$. We define it simply as the commensurability class of $\CO^1$ in 
$D^\times/F^\times$, and denote it by $\mathscr{C}(D)$. In $\mathscr{C}(D)$, one is particularly interested in those groups $\Gamma'$
with minimal covolume. Borel~\cite{bor81} shows that, up to conjugacy, there are finite many such groups, and gives their covolume
purely in terms of the number theoretic data used in defining them. These groups are called {\it maximal arithmetic Fuchsian groups}, 
and are the main objects of interest to us in this section.

\begin{thm}[Borel~\cite{bor81}]\label{thm:borel-volume}
Every maximal arithmetic Fuchsian group in $\mathscr{C}(D)$ is of the form $\Gamma_\CO$, where $\CO$ is a maximal order in $D$. In that case, the covolume
of $\Gamma_\CO$ is given by $$\mathrm{Vol}(\Gamma_\CO\backslash \mathfrak{H}) = \frac{8 \pi D_F^{3/2}\zeta_F(2)}{(4\pi^2)^g[H : {F^\times}^2]} \prod_{\gq \in S_f}(\N\gq - 1),$$
where $H = \{\Nrd(x) : x \in N_D(\CO)_{+}\}$. In particular, it depends only on $F$ and $S_f$. 
\end{thm}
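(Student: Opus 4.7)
The plan is to split the theorem into two parts: the classification of maximal arithmetic Fuchsian groups in $\mathscr{C}(D)$, and the explicit covolume formula. I would handle both parts adelically, passing to the finite-adele group $(D \otimes_F \A_f)^\times$, where open compact subgroups correspond, via strong approximation (valid since $D$ is split at $v_1$), to commensurable arithmetic subgroups of $D^\times / F^\times$.

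For the classification, I would identify each $\Gamma'$ commensurable with $\Gamma^1$, modulo $F^\times$, with an open compact subgroup $U \subset (D \otimes_F \A_f)^\times / F^\times$, and note that $\Gamma'$ is maximal if and only if $U$ is. Locally at each finite place $\gq$, the structure theory of orders (see Vign\'eras~\cite{vig80}, Ch.~II; Voight~\cite{voi18}) classifies the maximal open compact subgroups of $D_\gq^\times / F_\gq^\times$ as exactly the images of the normalizers $N_{D_\gq}(\CO_\gq)$, where $\CO_\gq$ ranges over the maximal orders of $D_\gq$. Assembling these local data globally yields a maximal order $\CO \subset D$ whose adelic normalizer recovers $U$, and strong approximation then gives the identification $\Gamma' = \Gamma_\CO$.

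For the covolume, I would first invoke the Eichler-Shimizu mass formula to obtain
$$\mathrm{Vol}(\Gamma^1 \backslash \mathfrak{H}) = \frac{8 \pi D_F^{3/2} \zeta_F(2)}{(4\pi^2)^g} \prod_{\gq \in S_f}(\N\gq - 1),$$
which follows from an Euler-product decomposition of the Tamagawa number of the algebraic group $D^1$, combined with a local volume computation at each place (each ramified prime contributing a factor of $\N\gq - 1$). Next, from the reduced-norm exact sequence
$$1 \longrightarrow \Gamma^1 \longrightarrow \Gamma_\CO \xrightarrow{\Nrd} H / F^{\times 2} \longrightarrow 1,$$
I would read off $[\Gamma_\CO : \Gamma^1] = [H : F^{\times 2}]$, and dividing the covolume of $\Gamma^1$ by this index yields the stated formula.

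The main obstacle is to show that the group $H$ is intrinsic to the pair $(F, S_f)$, independent of the particular maximal order $\CO$. I would argue this via class field theory: the image $\Nrd(N_D(\CO)_+)$ is characterized by purely local conditions at the primes in $S_f$ (where $\Nrd(N_{D_\gq}(\CO_\gq))$ is generated by a uniformizer together with the local units) and by the totally positive condition at the archimedean ramified places in $S_\infty$, assembled globally through the narrow class group of $F$. Since all maximal orders in $D$ are locally conjugate, none of these contributions depends on the chosen $\CO$, so $H$ is well-defined, and the covolume formula follows.
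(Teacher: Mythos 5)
The paper offers no argument here at all---it simply cites Borel \cite[\S 8.4]{bor81}---so your sketch has to stand on its own. The covolume half of it does: the Eichler--Shimizu volume formula for $\Gamma^1\backslash\mathfrak{H}$ (which checks out against $\mathrm{Vol}(\PSL_2(\Z)\backslash\mathfrak{H})=\pi/3$ when $F=\Q$, $S_f=\emptyset$), together with the reduced-norm exact sequence giving $[\Gamma_\CO:\Gamma^1]=[H:{F^\times}^2]$, is the standard and correct route. Two small points you should make explicit: the kernel computation needs the lemma that an element of $N_D(\CO)$ of reduced norm in ${F^\times}^2$ is, after scaling by $F^\times$, a unit of $\CO$ (true for maximal orders because the reduced norm is injective on the group of two-sided ideals), and the identification $\Nrd(\CO_+^\times)=\CO_F^{\times+}$ uses Eichler's norm theorem, hence the Eichler condition.

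The classification half has a genuine gap. Your local claim---that the maximal open compact subgroups of $D_\gq^\times/F_\gq^\times$ are exactly the images of the normalizers of maximal orders---is false at every place where $D$ splits. There $D_\gq^\times/F_\gq^\times\simeq\PGL_2(F_\gq)$ acts on the Bruhat--Tits tree and has \emph{two} conjugacy classes of maximal compact subgroups: the vertex stabilizers $\PGL_2(\CO_{F_\gq})$, and the edge stabilizers, which are the images of normalizers of Eichler orders of level $\gq$ (Iwahori normalizers, containing the local Atkin--Lehner element that inverts an edge and hence fixes no vertex). Assembling local data therefore produces, as candidates for maximal arithmetic groups, the normalizers of Eichler orders of arbitrary square-free level coprime to $\mathrm{disc}(D)$---and these really are maximal in general: for $D=\mathrm{M}_2(\Q)$ the groups $\Gamma_0(p)^{+}$ are maximal discrete subgroups of $\PSL_2(\R)$ not conjugate into $\PSL_2(\Z)$ (Helling), and their covolumes exceed $\pi/3$, so they cannot be of the form $\Gamma_\CO$ with $\CO$ maximal. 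This is precisely why Borel's theorem, in full, classifies maximal arithmetic Fuchsian groups in $\mathscr{C}(D)$ via Eichler orders of square-free level, with the covolume formula acquiring extra factors indexed by that level. Your argument as written would ``prove'' that the edge-stabilizer groups are never maximal, which is wrong; to recover the statement as quoted (maximal orders only, covolume depending only on $F$ and $S_f$) you must either prove the full classification and observe that the paper is invoking only the level-$(1)$ member of the family, or supply a genuine additional argument at the split places.
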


\begin{proof} See Borel~\cite[\S 8.4]{bor81}. 
\end{proof}

\subsection{The Atkin-Lehner group}\label{subsec:atkin-lehner-gp}
We define the {\it Atkin-Lehner group}
$$W := N_D(\CO)/F^\times \CO^\times.$$
By the Skolem-Noether Theorem~\cite[Chap. II, Th\'eor\`eme 2.1]{vig80}, $W$ can be identified with the group of automorphisms of $\CO$. It is generated by the
classes $[u] \in W$ such that $(u)$ is a principal two-sided ideal whose norm is supported at the prime ideals in $S_f$. By the Hasse-Schilling-Maass 
Theorem~\cite[Chap. III, Th\'eor\`eme 5.7]{vig80}, $W$ is a {\it finite} elementary abelian $2$-group. So, there is a positive integer $r$ such that
$$W \simeq (\Z/2\Z)^r.$$
We define the {\it positive Atkin-Lehner groups}
\begin{align*}
W_{+} &:= N_D(\CO)_+/F^\times \CO_{+}^\times,\\
W^1 &:= N_D(\CO)/F^\times \CO^1.
\end{align*}
There is a split exact sequence
$$1 \to \CO_F^{\times +}/(\CO_F^\times)^2 \to W^1 \to W_{+} \to 1,$$
which gives an isomorphism $$W^1 \simeq \CO_F^{\times +}/(\CO_F^\times)^2 \times W_{+} \simeq (\Z/2\Z)^s,$$
where $s \le (n-1) + r$. The rank $s$ of $W^1$ can be determined from the Dirichlet unit theorem and the fact that
the image of $W_{+}$ inside $W$ is generated by those principal two-sided ideals whose norms are totally positive 
and supported at $S_f$.

\subsection{Optimal embeddings}
Let $E/F$ be a CM extension, i.e. a totally imaginary quadratic extension. By~\cite[Chap. III, Th\'eor\`eme 3.8]{vig80}, $E$ embeds into $D$ if and only if,
every finite place $v \in S_f$ is ramified or inert in $E$. The following theorem will be very useful for us.  

\begin{thm}\label{thm:chinburg-friedman}
Let $E/F$ be a {\rm CM} extension, and $\sigma:\, E \hookrightarrow D$ an embedding. Let $\alpha \in E \setminus F$, and 
$\mathrm{disc}(\alpha) = \Tr_{E/F}(\alpha)^2 - 4 \N_{E/F}(\alpha)$. Then, up to conjugation, $\sigma(\alpha) \in N_D(\CO)_{+}$ 
if and only if $\mathrm{disc}(\alpha)/\N_{E/F}(\alpha) \in \CO_F$, and $\N_{E/F}(\alpha) \in F_{+}^\times$ is supported at $S_f$ 
modulo squares. 
\end{thm}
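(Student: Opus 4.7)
My plan is a local-to-global analysis of the condition $\sigma(\alpha)\in N_D(\CO)_+$, done place by place.

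First, I would note that since $E/F$ is CM, $\N_{E/F}(\alpha)=\Nrd(\sigma(\alpha))$ is automatically totally positive at every real place of $F$, so the positivity decoration in $N_D(\CO)_+$ is free. The requirement $\sigma(\alpha)\in N_D(\CO)$ then reduces to local normalizer conditions $\sigma_v(\alpha)\in N_{D_v}(\CO_v)$ at each finite place $v$. At $\gq\in S_f$, $D_\gq$ is a local division algebra and $\CO_\gq$ its unique maximal order, so $N_{D_\gq}(\CO_\gq)=D_\gq^\times$ and the condition will be automatic.

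Next, I would focus on the finite places $v\notin S_f$, where $D_v\cong M_2(F_v)$ and $N_{D_v}(\CO_v)=F_v^\times \GL_2(\CO_{F_v})$. Up to $D_v^\times$-conjugation of $\sigma_v$, the condition becomes $\sigma_v(\alpha)=c_v g_v$ with $c_v\in F_v^\times$ and $g_v\in\GL_2(\CO_{F_v})$. Matching the characteristic polynomial of $g_v$ against $x^2-(t/c_v)x+n/c_v^2$, where $t=\Tr_{E/F}(\alpha)$ and $n=\N_{E/F}(\alpha)$, will force $v(n)$ to be even (with $v(c_v)=v(n)/2$) and $v(t)\ge v(n)/2$. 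A short case analysis separating $v\nmid 2$ from $v\mid 2$ will show that these local conditions are jointly equivalent to $v(n)\in 2\Z$ together with $v(t^2-4n)\ge v(n)$.

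Aggregating over $v\notin S_f$, the parity conditions give precisely that $\N_{E/F}(\alpha)$ is supported at $S_f$ modulo squares, while the valuation bounds give the integrality of $\mathrm{disc}(\alpha)/\N_{E/F}(\alpha)$ away from $S_f$. At $\gq\in S_f$, the local embedding $E_\gq\hookrightarrow D_\gq$ forces $E_\gq$ to be an inert or ramified quadratic extension of $F_\gq$, and a direct valuation computation in $E_\gq$ yields $v(t)\ge v(n)/2$, hence $v(t^2-4n)\ge v(n)$, so the integrality at $\gq\in S_f$ will be automatic; this promotes the integrality from away-$S_f$ to a global statement and completes the forward direction. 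For the converse, the two global hypotheses will yield the local conditions at every $v$; I would then invoke Eichler's theory of optimal embeddings of quadratic orders to produce locally compatible embedding data at each finite place, and strong approximation for $\CO^1$ (valid because $D$ is split at the archimedean place $v_1$) to assemble these data into a global $D^\times$-conjugate of $\sigma$ with image in $N_D(\CO)_+$.

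The main technical obstacle will be the uniform handling of primes above $2$, where the factor $4$ in $\mathrm{disc}(\alpha)=t^2-4n$ interacts non-trivially with the $2$-adic valuations of $t$ and $n$; the local ramification of $E/F$ at such primes must be tracked carefully to separate the contributions of the two global conditions, especially at the prime $\gp\in S_f$ of residue characteristic $2$ relevant to the paper's application.
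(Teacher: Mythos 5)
Your proposal is a genuine direct proof, whereas the paper does not argue this statement at all: it simply cites Chinburg--Friedman \cite[Lemma 4.3]{cf99} and Maclachlan \cite[Theorem 3.1]{mac06}, which are themselves proved by exactly the kind of place-by-place analysis you outline. Your local computations are correct and complete: total positivity of $\N_{E/F}(\alpha)=\Nrd(\sigma(\alpha))$ is indeed free for a CM extension; $N_{D_\gq}(\CO_\gq)=D_\gq^\times$ at $\gq\in S_f$ makes those places vacuous; at split $v$ the normalizer is $F_v^\times\GL_2(\CO_{F_v})$ and your characteristic-polynomial matching gives precisely ``$v(n)$ even and $v(t)\ge v(n)/2$'', which one checks (uniformly, even at $v\mid 2$, since $v(4n)>v(n)$ there) is equivalent to ``$v(n)$ even and $v(t^2-4n)\ge v(n)$''; and the observation that $v(t)\ge v(n)/2$ holds automatically whenever $E_v/F_v$ is a field neatly explains why integrality of $\mathrm{disc}(\alpha)/\N_{E/F}(\alpha)$ at $S_f$ comes for free. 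What your route buys is transparency about where each of the two global conditions comes from; what the citation buys the author is not having to address the one delicate point, namely the adelic-to-global step.

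That step is the only place where your plan is thinner than it should be. Having local conjugators $\gamma_v$ with $\gamma_v\sigma_v(\alpha)\gamma_v^{-1}\in N_{D_v}(\CO_v)$, you must produce a single $\gamma\in D^\times$; the freedom is to modify the adelic conjugator by $N_{\widehat{D}}(\widehat{\CO})$ on one side and by $\sigma(\widehat{E}^\times)$ (the centralizer) on the other, and strong approximation reduces the existence of $\gamma$ to the vanishing of a class in $\widehat{F}^\times/\bigl(\Nrd(D^\times)\,\Nrd(N_{\widehat{D}}(\widehat{\CO}))\,\N_{E/F}(\widehat{E}^\times)\bigr)$. This is exactly the selectivity obstruction of \cite{cf99}, and ``invoke strong approximation'' does not by itself dispose of it. It does vanish here, but for a reason worth recording: the corresponding class field is split at the archimedean place $v_1$ (because $D_{v_1}\simeq \mathrm{M}_2(\R)$ gives $\Nrd(N_{D_{v_1}}(\CO))=\R^\times$), while $E$ is totally imaginary, so $E$ cannot embed in that class field and the obstruction group dies. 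Note that the paper itself is careful to remark, in \S 3.5, that its maximal orders do not satisfy the selectivity condition of \cite{cf99}; your write-up should make the analogous check explicit rather than leaving it inside the phrase ``assemble these data''.
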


\begin{proof} This follows from Chinburg-Friedman~\cite[Lemma 4.3]{cf99} (see also Maclachlan~\cite[Theorem 3.1]{mac06}). 
\end{proof}

Let $E/F$ be a CM extension, and $\gO$ an $\CO_F$-order in $E$. 
An {\it optimal embedding} of $\gO$ in $\CO$ is a homomorphism $\iota:\, E \hookrightarrow D$ such that
$\iota(\gO) = \iota(E) \cap \CO$. We denote the set of optimal embeddings of $\gO$ into
$\CO$ by $\Emb(\gO, \CO)$. We fix an embedding $E \hookrightarrow D$. Then, by the Skolem-Noether Theorem, 
every embedding of $E$ into $D$ is of the form $(x \mapsto \alpha x \alpha^{-1})$ for some $\alpha \in D^\times$. 
So, we can identify $\Emb(\gO, \CO)$ with the coset space $E^\times\backslash\mathscr{E}(\gO, \CO)$ where
\begin{align*}
\mathscr{E}(\gO, \CO) &:= \left\{ \alpha \in D^\times : \alpha E \alpha^{-1} \cap \CO = \gO \right\}\\
&\,\,= \left\{ \alpha \in D^\times : E \cap \alpha^{-1} \CO \alpha = \alpha^{-1}\gO\alpha \right\}.
\end{align*}
Conjugation induces a right action of $N_D(\CO)/F^\times$ on $\Emb(\gO, \CO)$.
For any subgroup $\Gamma^1 \subset \Gamma \subset N_D(\CO)/F^\times$, we let 
$\Emb(\gO, \CO; \Gamma)$ be the set of $\Gamma$-conjugacy classes of optimal embeddings. 
Similarly, if $\CO^1 \subset G \subset N_D(\CO)$, we let
$\Emb(\gO, \CO; G) := \Emb(\gO, \CO; \overline{G})$, where $\overline{G}$ is the image of $G$ 
in $N_D(\CO)/F^\times$. The set $\Emb(\gO, \CO; \Gamma)$ is {\it finite} since $\Gamma$ has finite
index in $N_D(\CO)/F^\times$. The cardinality $m(\gO, \CO; \Gamma)$ of this set is called the embedding number of
$\gO$ into $\CO$, with respect to $\Gamma$; or simply the embedding number of $\gO$ into $\CO$ when $\Gamma = \CO^\times$. 
There are formulae for $m(\gO, \CO; \CO^\times)$, see for example~\cite[Chap. II, \S 3 and Chap. III, \S 5]{vig80} or~\cite[\S 30]{voi18}. 
The following lemma can be used to get  $m(\gO, \CO; G)$ for any subgroup $\CO^1 \subset G \subset \CO^\times$. 

\begin{lem}\label{lem:mO1} Let $\CO^1 \subset G \subset \CO^\times$ be a subgroup. 
Then we have
$$m(\gO, \CO; G) = m(\gO, \CO; \CO^\times) [\Nrd(\CO^\times) : \Nrd(G)\N_{E/F}(\gO^\times)].$$
\end{lem}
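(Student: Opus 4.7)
The plan is to compare $G$-conjugacy classes and $\CO^\times$-conjugacy classes of optimal embeddings by computing the fibres of the natural surjection $\Emb(\gO, \CO; G) \twoheadrightarrow \Emb(\gO, \CO; \CO^\times)$. Since $G \subset \CO^\times$, every $\CO^\times$-class breaks up into finitely many $G$-classes, so it suffices to show that every fibre has cardinality $[\Nrd(\CO^\times) : \Nrd(G)\N_{E/F}(\gO^\times)]$ and then multiply by $m(\gO, \CO; \CO^\times)$.

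First I would identify the fibres. Fix a representative $\iota : E \hookrightarrow D$ of a given $\CO^\times$-class. The stabiliser of $\iota$ under $\CO^\times$-conjugation is the centraliser of $\iota(E)$ inside $\CO^\times$. Because $E$ is a maximal commutative subfield of the quaternion algebra $D$, its centraliser in $D$ is $\iota(E)$ itself, so this stabiliser equals $\iota(E) \cap \CO^\times$, which by the optimality condition $\iota(E) \cap \CO = \iota(\gO)$ is precisely $\iota(\gO^\times)$. The $\CO^\times$-orbit of $\iota$ is therefore in bijection with $\CO^\times/\iota(\gO^\times)$, and its partition into $G$-orbits is parameterised by the double-coset space $G \backslash \CO^\times / \iota(\gO^\times)$.

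The key step is to show that the reduced norm induces a bijection
$$G \backslash \CO^\times / \iota(\gO^\times) \;\longleftrightarrow\; \Nrd(\CO^\times)/\bigl(\Nrd(G)\,\N_{E/F}(\gO^\times)\bigr),$$
using that $\Nrd \circ \iota = \N_{E/F}$ on $E$. Well-definedness and surjectivity are immediate. For injectivity, this is exactly where the hypothesis $\CO^1 \subset G$ enters: if $\Nrd(\alpha) = \Nrd(g)\Nrd(\beta)\N_{E/F}(u)$ with $g \in G$ and $u \in \gO^\times$, then $\alpha\,(g\beta\iota(u))^{-1}$ has reduced norm $1$ and so lies in $\CO^1 \subset G$, which forces $\alpha \in G\beta\iota(\gO^\times)$. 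Since the target is a quotient of an abelian group, its cardinality is the stated index, and visibly independent of the choice of $\iota$.

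Combining, each $\CO^\times$-class contributes the same number of $G$-classes, giving the identity. I expect the argument to be essentially formal; the only points requiring care are pinning down the stabiliser via the optimality condition and consistently interpreting the double-coset/quotient bijection. Neither is a serious obstacle, and no deeper input is needed beyond the standard fact that a maximal commutative subfield of a quaternion algebra is self-centralising, together with the exactness of $1 \to \CO^1 \to \CO^\times \xrightarrow{\Nrd} \Nrd(\CO^\times) \to 1$.
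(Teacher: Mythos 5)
Your argument is correct. Note that the paper does not actually write out a proof of this lemma: it simply cites Voight \cite[Lemma 30.3.14]{voi18}. What you have supplied is precisely the standard argument behind that citation, and it is also the same technique the paper itself spells out for the companion Lemma~\ref{lem:mO2}: compare the two sets of conjugacy classes via the natural surjection, identify each fibre with a double-coset space (here $\iota(\gO^\times)\backslash\CO^\times/G$, using that $E$ is self-centralising and that optimality pins the stabiliser down to $\iota(\gO^\times)$), and then use the reduced norm, whose kernel $\CO^1$ is contained in $G$, to identify that double-coset space with the abelian quotient $\Nrd(\CO^\times)/\bigl(\Nrd(G)\N_{E/F}(\gO^\times)\bigr)$. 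All the steps you flag as needing care (the stabiliser computation and the injectivity of the norm map on double cosets) are handled correctly, so there is nothing to add.
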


\begin{proof} See Voight~\cite[Lemma 30.3.14]{voi18}. (We note that the statement in Vign\'eras \cite[Chap. III, Corollaire 5.13]{vig80} is
only correct with the inclusion $G \subset N_D(\CO)$ replaced by $G \subset \CO^\times$.)  
\end{proof}

Here we are interested in the case when  $\CO_{+}^\times \subset G \subset N_D(\CO)_{+}$. In particular,
we want $\Emb(\gO, \CO; N_D(\CO)_{+})$ when $\CO$ is a maximal order in $D$.

\begin{lem}\label{lem:mO2} Let $\CO_{+}^\times \subset G \subset N_D(\CO)_{+}$ be a subgroup. 
Then we have
$$m(\gO, \CO; \CO_{+}^\times) = m(\gO, \CO; G) \left[ \Nrd(G) : \Nrd(G) \cap \N_{E/F}(E^\times)\CO_F^{\times +}\right].$$
\end{lem}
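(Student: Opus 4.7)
My plan is to establish the formula by an orbit-stabilizer argument, in close analogy with the proof of Lemma~\ref{lem:mO1} from Voight's treatise. First I would fix a $G$-orbit on $\Emb(\gO,\CO)$ represented by an optimal embedding $[\alpha]$, and observe that its stabilizer in $G$ is $H_\alpha := G \cap \alpha E^\times \alpha^{-1}$. Conjugation by any element of $N_D(\CO)_+$ preserves $\CO$ and the reduced norm (hence preserves total positivity), so $\CO_+^\times$ is normal in $N_D(\CO)_+$ and in particular normal in $G$. Therefore the number of $\CO_+^\times$-orbits contained in the chosen $G$-orbit equals the group-theoretic index $[G:\CO_+^\times H_\alpha]$.

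Next I would transfer this index through the reduced norm. The Hasse-Schilling-Maass theorem gives $\Nrd(\CO_+^\times)=\CO_F^{\times+}$, and $\ker(\Nrd|_G)\subseteq \CO^1\subseteq \CO_+^\times$, so $\Nrd$ descends to an isomorphism $G/\CO_+^\times \cong \Nrd(G)/\CO_F^{\times+}$. Under this isomorphism the subgroup $\CO_+^\times H_\alpha/\CO_+^\times$ maps onto $\CO_F^{\times+}\Nrd(H_\alpha)/\CO_F^{\times+}$.

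The central computation is then the identification
\[
\Nrd(H_\alpha)=\Nrd(G)\cap \N_{E/F}(E^\times).
\]
The inclusion $\subseteq$ is immediate since $\Nrd(\alpha e\alpha^{-1})=\N_{E/F}(e)$. For $\supseteq$, given $\mu=\N_{E/F}(e)\in\Nrd(G)$, I would use Theorem~\ref{thm:chinburg-friedman} to produce an element of the stabilizer that normalises $\CO$ and has reduced norm $\mu$. Combined with the easy identity
\[
\CO_F^{\times+}\bigl(\Nrd(G)\cap \N_{E/F}(E^\times)\bigr)=\Nrd(G)\cap \N_{E/F}(E^\times)\CO_F^{\times+},
\]
this yields
\[
[G:\CO_+^\times H_\alpha]=\bigl[\Nrd(G):\Nrd(G)\cap \N_{E/F}(E^\times)\CO_F^{\times+}\bigr],
\]
a quantity independent of $\alpha$. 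Summing over the $m(\gO,\CO;G)$ orbits gives the formula.

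The main obstacle will be establishing the non-trivial inclusion $\Nrd(G)\cap \N_{E/F}(E^\times)\subseteq \Nrd(H_\alpha)$: one must show that every admissible reduced norm can be realised by an element of $E^\times$ whose $\alpha$-conjugate genuinely lies in $N_D(\CO)_+$, and moreover in $G$. This is exactly the setting where the Chinburg-Friedman criterion quoted in Theorem~\ref{thm:chinburg-friedman} applies, and I expect the verification to reduce to checking the support condition on $\N_{E/F}$ at the primes in $S_f$.
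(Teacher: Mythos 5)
Your orbit--stabilizer argument is essentially the paper's own proof: the paper computes the fibre of the surjection $E^\times\backslash\mathscr{E}(\gO,\CO)/\CO_{+}^\times \to E^\times\backslash\mathscr{E}(\gO,\CO)/G$ over the class of $\alpha$ as the double coset space $\left(\alpha E^\times\alpha^{-1}\cap G\right)\backslash G/\CO_{+}^\times$ (your $[G:\CO_{+}^\times H_\alpha]$, using normality of $\CO_{+}^\times$) and then identifies it via the reduced norm with $\Nrd(G)/\bigl(\Nrd(G)\cap\N_{E/F}(E^\times)\CO_F^{\times+}\bigr)$, citing $\ker(\Nrd)=\CO^1\subset\CO_{+}^\times$ exactly as you do. The ``hard inclusion'' you isolate, namely $\Nrd(G)\cap\N_{E/F}(E^\times)\subseteq\Nrd(H_\alpha)$ up to $\CO_F^{\times+}$, is precisely the surjectivity half of the bijection that the paper asserts without elaboration, and your plan to settle it with Theorem~\ref{thm:chinburg-friedman} matches how the paper treats elements of $N_D(\CO)_{+}$ coming from $E^\times$; so the two arguments coincide, with yours making the one nontrivial step explicit.
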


\begin{proof} There is a natural surjection
$$E^\times\backslash\mathscr{E}(\gO, \CO)/\CO_{+}^\times \to E^\times\backslash\mathscr{E}(\gO, \CO)/G.$$
To prove the lemma, we need to understand the fibres of this map. For $\alpha \in \mathscr{E}(\gO, \CO)$, the fibre of $E^\times \alpha G$ is 
$$T := E^\times\backslash E^\times \alpha G/\CO_{+}^\times \simeq \left(\alpha E^\times \alpha^{-1} \cap G\right)\backslash G /\CO_{+}^\times.$$
It is enough to show that the cardinality of $T$ is independent of $\alpha$. To see this, we recall that the reduced norm $\Nrd:\, D_{+}^\times \to F_{+}^\times$
induces a map
$$\phi:\,\left(\alpha E^\times \alpha^{-1} \cap G\right)\backslash G /\CO_{+}^\times \to \Nrd(G)/\Nrd(G)\cap \N_{E/F}(E^\times)\CO_F^{\times +},$$
which is a bijection since $\ker(\phi) = \CO^1 \subset \CO_{+}^\times \subset G \subset N_D(\CO)_{+}$. 

Alternatively, we can observe that $\CO_{+}^\times$ is a normal subgroup of $G$. So, we can identify 
$\left(\alpha E^\times \alpha^{-1} \cap G\right)\backslash G /\CO_{+}^\times$ with a subgroup of $W_{+}$.
This means that $\# T$ divides $\#W_{+}$, and is always a power of $2$. 
\end{proof}

Let $\widehat{\CO} := \CO \otimes \widehat{\Z} = \prod_{v<\infty}\CO_v$ and $\widehat{D} := D\otimes\widehat{\Q}$, where $\widehat{\Z}$ and $\widehat{\Q}$ are the finite ad\`eles of $\Z$
and $\Q$, respectively. For every finite place $v$, let $\CO_v^\times \subset G_v \subset N_{D_v}(\CO_v)$ be a subgroup, and $\widehat{G} := \prod_{v<\infty}G_v$. 
We would like to understand the global embedding numbers of the group $\widehat{G}$, or $G := \widehat{G} \cap D_{+}^\times$. Since $D$ satisfies the 
Eichler condition, we have  $D_{+}^\times \backslash \widehat{D}^\times/\widehat{\CO}^\times \simeq \Cl_F^{+}$, where $\Cl_F^{+}$ is the narrow class group of $F$. 
Let $h = \#\Cl_F^{+}$ be the narrow class number of $F$, and $$\widehat{D}^\times  = \coprod_{i = 1}^h D_{+}^\times g_i \widehat{\CO}^\times,$$
where $g_i \in \widehat{D}^\times$, $i = 1,\,\ldots,\,h$, and $g_1 = 1$. Then, for each $i$, $\CO_i := g_i \widehat{\CO}g_i^{-1} \cap D$ is a maximal order, 
and $N_D(\CO_i) = g_i N_{\widehat{D}}(\widehat{\CO})g_i^{-1} \cap D$. 
Letting $G_i := g_i \widehat{G} g_i^{-1} \cap D_{+}^\times$, we have $(\CO_i)_{+}^\times \subset G_i \subset N_D(\CO_i)_{+}$. 

For $\widehat{G} = \widehat{\CO}^\times$, there are formulae for global optimal embeddings numbers (see~\cite[Chap. III, \S 5]{vig80} or~\cite[\S 30]{voi18}).
For $\widehat{\CO}^\times \subset \widehat{G} \subset N_{\widehat{D}}(\widehat{\CO})$, we have the following theorem. 

\begin{thm}\label{thm:emb-numbers} Keeping the above notations, let $G := G_1$ and $h_{\gO}$ be the class number of $\gO$. Then we have
\begin{align*}
\sum_{i = 1}^{h} m(\gO, \CO_i; G_i) = \frac{2h_\gO}{[H : H \cap \N_{E/F}(E^\times)\CO_F^{\times+}]}\prod_{v<\infty}m(\gO_v, \CO_v; \CO_v^\times),
\end{align*}
where $H := \Nrd(G)$, and $m(\gO_v, \CO_v; \CO_v^\times)$ is the local embedding number at the place $v$. (Here $v$ runs over all finite places.)
\end{thm}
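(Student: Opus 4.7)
The plan is to derive the formula adelically, combining the classical local--global mass formula for optimal embeddings with the two index formulas in Lemmas~\ref{lem:mO1} and~\ref{lem:mO2} which together govern the passage from $\CO_i^\times$ to the intermediate group $G_i$ via $(\CO_i)_+^\times$.

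First I would recall the classical mass formula (see \cite[Chap.~III, \S 5]{vig80} or \cite[\S 30]{voi18}),
$$\sum_{i=1}^{h} m(\gO, \CO_i; \CO_i^\times) \;=\; h_\gO \prod_{v<\infty} m(\gO_v, \CO_v; \CO_v^\times),$$
established by strong approximation. Using the Eichler condition one has $\widehat D^\times = \coprod_{i=1}^{h} D_+^\times g_i \widehat\CO^\times$ with $h = \#\Cl_F^+$, which yields a bijection $\coprod_i E^\times \backslash \mathscr{E}(\gO, \CO_i)/\CO_i^\times \leftrightarrow E^\times \backslash \mathscr{E}(\widehat\gO, \widehat\CO)/\widehat\CO^\times$. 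The local--global principle factors $\mathscr{E}(\widehat\gO, \widehat\CO) = \prod_v \mathscr{E}(\gO_v, \CO_v)$, producing the product of local embedding numbers, and the descent from $\widehat E^\times$ to $E^\times$ contributes $h_\gO = [\widehat E^\times : E^\times \widehat\gO^\times]$.

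Next, I would apply Lemma~\ref{lem:mO1} with $G = \CO_+^\times$ to pass from $\CO_i^\times$ to $(\CO_i)_+^\times$, and then Lemma~\ref{lem:mO2} to pass from $(\CO_i)_+^\times$ to $G_i$. Composed, these give
$$m(\gO, \CO_i; G_i) = m(\gO, \CO_i; \CO_i^\times) \cdot \frac{[\Nrd(\CO_i^\times) : \Nrd((\CO_i)_+^\times) \N_{E/F}(\gO^\times)]}{[\Nrd(G_i) : \Nrd(G_i) \cap \N_{E/F}(E^\times) \CO_F^{\times+}]}.$$
Both indices are independent of $i$: conjugation preserves reduced norms at each place, so $\Nrd(G_i) = \Nrd(G) = H$ and $\Nrd(\CO_{i,v}^\times) = \Nrd(\CO_v^\times)$ for all $i$ and $v$; the global reduced-norm images are pinned down by Hasse--Schilling--Maass purely in terms of the archimedean ramification of $D$. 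A short calculation, using that $E/F$ is CM (so $\N_{E/F}(\gO^\times) \subset \CO_F^{\times+}$) and that $D$ is split at exactly one archimedean place $v_1$, then evaluates the numerator to be $2$.

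Summing over $i$ and substituting the classical mass formula yields the claimed identity. The main obstacle I anticipate is the bookkeeping of these indices, and in particular the evaluation of the numerator as precisely $2$: the key point is that $\Nrd(\CO_i^\times)$ differs from $\CO_F^{\times+}$ by exactly one coset, represented by a unit of $\CO_F$ whose sign is negative only at $v_1$, and that this coset is not absorbed by $\N_{E/F}(\gO^\times)$, which consists of totally positive norms since $E/F$ is CM. Once this is pinned down, the remainder of the argument is a mechanical composition of the classical mass formula with the two index lemmas.
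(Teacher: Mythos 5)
Your proposal is correct and follows essentially the same route as the paper: apply Lemma~\ref{lem:mO1} with $G=\CO_{+}^\times$ to produce the factor $[\Nrd(\CO^\times):\CO_F^{\times+}]=2$ (via the Norm Theorem, the CM hypothesis on $E/F$, and the single split archimedean place), compose with Lemma~\ref{lem:mO2}, note that $\Nrd(G_i)=H$ is independent of $i$, and sum over the ideal classes using the classical formula $\sum_i m(\gO,\CO_i;\CO_i^\times)=h_\gO\prod_v m(\gO_v,\CO_v;\CO_v^\times)$. The only cosmetic difference is the order of operations (you state the adelic mass formula first and cite it last in the paper), which changes nothing.
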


\begin{proof} By applying Lemma~\ref{lem:mO1} with $G = \CO_{+}^\times$, we have
\begin{align*}
m(\gO, \CO; \CO_{+}^\times) &= m(\gO, \CO; \CO^\times) [\Nrd(\CO^\times) : \Nrd(\CO_{+}^\times)\N_{E/F}(\gO^\times)]\\
&= m(\gO, \CO; \CO^\times) [\Nrd(\CO^\times) : \CO_F^{\times +}] = 2 m(\gO, \CO, \CO^\times).
\end{align*}
The latter equality follows from the fact that $D$ is ramified at all but one archimedian place, the Norm Theorem~\cite[Chap. III, Th\'eor\`eme 4.1]{vig80}
and the Dirichlet unit theorem. 

Now we return to the situation  $\CO_{+}^\times \subset G \subset N_D(\CO)_{+}$. Combining the above identity with Lemma~\ref{lem:mO2}, 
we have $$2 m(\gO, \CO; \CO^\times) = m(\gO, \CO; G)[\Nrd(G) : \Nrd(G) \cap \N_{E/F}(E^\times)\CO_F^{\times + }].$$
A similar identity holds for the other maximal orders. In other words, for each maximal order $\CO_i$, we have
$$2 m(\gO, \CO_i; \CO_i^\times) = m(\gO, \CO; G_i)[\Nrd(G_i) : \Nrd(G_i) \cap \N_{E/F}(E^\times)\CO_F^{\times + }].$$
However, the group $\Nrd(G_i)$ is independent of $i$ again by the Norm Theorem. Hence setting $H := \Nrd(G)$, we get
$$2m(\gO, \CO_i; \CO_i^\times) = m(\gO, \CO_i; G_i)[H : H \cap \N_{E/F}(E^\times)\CO_F^{\times + }].$$
So, we have
\begin{align*}
\sum_{i = 1}^h m(\gO, \CO_i; G_i) = \frac{2}{[H : H \cap \N_{E/F}(E^\times)\CO_F^{\times +}]}\sum_{i=1}^h m(\gO, \CO_i; \CO_i^\times). 
\end{align*}
We then apply~\cite[Chap. III, Th\'eor\`eme 5.11]{vig80} or \cite[Theorem 30.7.3]{voi18} to conclude the proof.

\end{proof}

\subsection{Torsion in maximal arithmetic groups}\label{subsec:torsion-elts}

 From now on, we will assume that the field $F$ has narrow class number one.
However, the results discussed here can be easily adapted to any field by following~\cite[\S\S 31 and 39]{voi18} given that our 
maximal orders do {\it not} satisfy the selectivity condition in~\cite{cf99}.

Since $F$ has narrow class number one, under the assumptions of Theorem~\ref{thm:emb-numbers}, we have
$$m(\gO, \CO; G) = \frac{2h_\gO}{[H : H \cap \N_{E/F}(E^\times)\CO_F^{\times+}]}\prod_{v<\infty}m(\gO_v, \CO_v; \CO_v^\times).$$

\begin{thm}\label{thm:elliptic-elts} Let $q \ge 2$ be an integer, and $e_q$ the number of elliptic points of order $q$ in $G$. 
Suppose that $e_q > 0$. For $q \ge 3$, let $E = F(\zeta_q)$, where $\zeta_q$ is a primitive $q$-th root of unity, and
let $\mathscr{S}_q$ be the set of $\CO_F$-orders defined by 
$$\mathscr{S}_q := \left\{ \CO_F[\zeta_q] \subset \gO \subset \CO_E : \#\gO_{\rm tors}^\times = q\right\}.$$
For $q = 2$, let $\mathscr{N}_q$ be a set of representatives for the norms of elements in $G$ in $\Nrd(W_{+})$,
and let $\mathscr{S}_q$ be the set of $\CO_F$-orders defined by 
$$\mathscr{S}_q := \bigcup_{n \in \mathscr{N}_q \atop E = F(\sqrt{-n})}\left\{ \CO_F[\sqrt{-n}] \subset \gO \subset \CO_E\right\}.$$
Then the number of elliptic points of order $q$ in $G$ is given by  
$$e_q := \frac{1}{2}\sum_{\gO \in \mathscr{S}_q} m(\gO, \CO; G).$$
\end{thm}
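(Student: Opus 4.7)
The plan is to identify elliptic points of order $q$ on $G\backslash\mathfrak{H}$ with $G$-conjugacy classes of optimal embeddings of orders $\gO\in\mathscr{S}_q$ into $\CO$, and then to read off the count from a $2$-to-$1$ correspondence coming from Galois conjugation on the underlying CM extension. I would begin with the standard dictionary: an elliptic point of order $q$ on $G\backslash\mathfrak{H}$ corresponds to a $G$-conjugacy class of cyclic subgroups of $G$ of exact order $q$, and a generator $\gamma$ of such a subgroup lifts (up to $F^\times$) to an element of $N_D(\CO)_+$ whose image under $j_1$ is a rotation of order $q$. The subfield $F[\gamma]\subset D$ is then a CM extension of $F$, and $\gO:=F[\gamma]\cap\CO$ is an $\CO_F$-order in that CM extension into which $\gamma$ optimally embeds.

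For $q\ge 3$, the element $\gamma$ must correspond to a primitive $q$-th root of unity, so $F[\gamma]\simeq F(\zeta_q)=E$ and $\gO$ contains $\CO_F[\zeta_q]$. The requirement that $\gamma$ generate a cyclic subgroup of exact order $q$ in $\Gamma$, and not be a proper power of a higher-order elliptic element, translates precisely to $\#\gO^\times_{\rm tors}=q$; this is the defining condition of $\mathscr{S}_q$. For $q=2$, the equation $\gamma^2\in F^\times$, ellipticity at every archimedean place, and membership in $N_D(\CO)_+$ together force $\gamma^2=-n$ with $n\in F_+^\times$ and $\Nrd(\gamma)=n$ representing a class in $\Nrd(W_{+})$; rescaling $\gamma$ by $F^\times\CO_+^\times$ changes $n$ by $F^{\times 2}\Nrd(\CO_+^\times)$, so the class of $n$ in $\Nrd(W_+)$ is well-defined and a system of representatives is precisely $\mathscr{N}_q$, with $F[\gamma]=F(\sqrt{-n})$.

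Conversely, given any $\gO\in\mathscr{S}_q$ and any optimal embedding $\iota\colon\gO\hookrightarrow\CO$, the element $\iota(\zeta_q)$ (for $q\ge 3$) or $\iota(\sqrt{-n})$ (for $q=2$) lies in $G$ and generates a cyclic subgroup of order exactly $q$. This furnishes a map from $\bigsqcup_{\gO\in\mathscr{S}_q}\Emb(\gO,\CO;G)$ to the set of $G$-conjugacy classes of cyclic subgroups of order $q$ in $G$, which is the set we want to count. The crucial observation is that this map is $2$-to-$1$: postcomposition with the nontrivial element $c$ of $\Gal(E/F)$ sends $\iota$ to $\iota\circ c$, which produces the element $\gamma^{-1}$ generating the same cyclic subgroup (or, for $q=2$, the element $-\gamma\equiv\gamma\pmod{F^\times}$ paired with the matching involution on the indexing by $\mathscr{N}_q$). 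Summing over $\gO\in\mathscr{S}_q$ therefore yields $\sum_{\gO\in\mathscr{S}_q}m(\gO,\CO;G)=2e_q$, which is the claimed formula.

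The main obstacle I anticipate is verifying the $2$-to-$1$ claim uniformly, i.e.\ showing that the involution $\iota\mapsto\iota\circ c$ is fixed-point-free on $\Emb(\gO,\CO;G)$. Any fixed point would require some $g\in G$ with $g\iota(\alpha)g^{-1}=\iota(\bar\alpha)$ for all $\alpha\in E$, meaning $g$ normalises $\iota(E)$ and induces the nontrivial Galois automorphism of $E/F$; such a $g$ lies outside $\iota(E^\times)$ and, by the classification of the normaliser of an embedded CM field in a quaternion algebra, would have reduced norm in $\N_{E/F}(E^\times)$ up to $F^{\times 2}$. Checking that this is ruled out by the definition of $\Nrd(G)$ and by the index $[H:H\cap\N_{E/F}(E^\times)\CO_F^{\times +}]$ appearing in Theorem~\ref{thm:emb-numbers} is the technical heart of the argument, and is where care with the normaliser-level bookkeeping from \S\ref{subsec:atkin-lehner-gp} will be essential.
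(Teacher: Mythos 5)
Your outline is the same as the paper's: the paper's proof is a one-line deferral to the standard dictionary between elliptic cycles and $G$-conjugacy classes of optimal embeddings (Voight \S 39.4, Vign\'eras Ch.~IV), with the $q=2$ case handled via Theorem~\ref{thm:chinburg-friedman} and the description of $W_{+}$, and that is exactly the dictionary you reconstruct, including the correct identification of $\mathscr{N}_2$ with classes of $\Nrd(\gamma)=n$ modulo $F^{\times 2}\Nrd(\CO_{+}^\times)$.

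The one point you leave open, however, is precisely the step that produces the factor $\tfrac12$, and the route you propose for closing it is the wrong tool. The fixed-point-freeness of $\iota\mapsto\iota\circ c$ on $\Emb(\gO,\CO;G)$ has nothing to do with the index $[H:H\cap\N_{E/F}(E^\times)\CO_F^{\times+}]$ of Theorem~\ref{thm:emb-numbers} (that index measures the fibres of $\Emb(\gO,\CO;\CO_{+}^\times)\to\Emb(\gO,\CO;G)$, a different comparison); it is an archimedean positivity statement. Suppose $g\in G$ satisfies $g\,\iota(x)\,g^{-1}=\iota(\bar x)$ for all $x\in E$. Writing $D=\iota(E)\oplus\iota(E)u$ with $u x u^{-1}=\bar x$ and $u^2=\theta\in F^\times$, one has $g\in \iota(E^\times)u$, hence $\Trd(g)=0$ and $\Nrd(g)=-g^2$. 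The torus $\iota(E^\times)\otimes\R\subset \GL_2(\R)$ (via $j_1$) has exactly two fixed points $z_0\in\mathfrak{H}^+$ and $\bar z_0\in\mathfrak{H}^-$ on $\mathbf{P}^1(\C)$, and since $g$ induces the nontrivial automorphism of $\iota(E)$ it cannot fix both, so $j_1(g)$ must swap $z_0$ and $\bar z_0$. But $g\in G\subset N_D(\CO)_{+}$ has $v_1(\Nrd(g))>0$, so $j_1(g)\in\GL_2^+(\R)$ preserves $\mathfrak{H}^+$ -- a contradiction. This is where the subscript ``$+$'' in $\CO_{+}^\times\subset G\subset N_D(\CO)_{+}$ earns its keep: for the full normaliser $N_D(\CO)$ the involution could have fixed points and the count would change. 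With this substitution your argument is complete and agrees with the paper's (cited) proof; the only remaining cosmetic issue is your parenthetical for $q=2$, where no ``matching involution on $\mathscr{N}_2$'' is needed -- the involution is still $\iota\mapsto\iota\circ c$ within each $\Emb(\gO,\CO;G)$, and both embeddings hit the same class $\gamma\bmod F^\times$.
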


\begin{proof}
The proof is essentially an adaptation of the discussion of~\cite[\S 39.4]{voi18} (see also~\cite[Chap. IV, Section 2]{vig80}); the only 
difference arises from the elliptic points that are fixed by the Atkin-Lehner group $W_{+}$. However, the number of $2$-torsion 
elliptic elements can be computed by combining Theorem~\ref{thm:chinburg-friedman} and \S \ref{subsec:atkin-lehner-gp}.
\end{proof}

\begin{rem}\rm There seems to be very little discussion on the number of elliptic elements (or optimal embeddings) 
in maximal arithmetic Fuchsian groups. The only literature we could find on this topic is from Michon~\cite{mich81}
and Vign\'eras~\cite[Chap. IV, \S 3]{vig80} for $F = \Q$, and Maclachlan~\cite{mac06, mac09} for $[F:\Q] > 1$. In the
latter case, however, the presentation is very different than ours. Our results are stated in a way as to draw the most parallel 
with optimal embeddings in Fuchsian groups, which correspond to Shimura curves, given that there is an abundance of 
literature in this case (see~\cite[\S 30]{voi18} and references therein). 
\end{rem}

\subsection{Genus formula}\label{subsec:genus-formula}
Let $\Gamma$ be a Fuchsian group of signature $(g; e_1,\ldots, e_r)$, then the quotient $\Gamma\backslash \mathfrak{H}$ 
is a compact Riemann surface, whose volume is given by
$$\mathrm{Vol}(\Gamma\backslash \mathfrak{H}) = 2 \pi \left(2g - 2  + \sum_{i = 1}^r \left(1 - \frac{1}{e_i}\right) \right).$$
When $\Gamma = \Gamma_\CO$ is maximal in some commensurability class $\mathscr{C}(D)$, the volume 
depends only on $F$ and $S_f$ according to Theorem~\ref{thm:borel-volume}. In fact, it follows from 
Maclachlan~\cite[Corollary 5.7]{mac09} that all maximal arithmetic Fuchsian groups in the commensurability class 
$\mathscr{C}(D)$ have the same signature, and we can compute their genus by combining the volume formula in 
Theorem~\ref{thm:borel-volume} with the results  of Subsection~\ref{subsec:torsion-elts} (at least when $F$ has 
narrow class number one).

\section{\bf Shimura curves}\label{sec:shimura-curves}
We keep the notations of Section~\ref{sec:fuchsian-groups}. Here, we summarise the necessary backgrounds on canonical models 
and $\gp$-adic unformisation of Shimura curves. Our main references are~\cite{bc91, bz, car86b, nek12, sij13}. We view $F$
as a subfield of $\C$ via the embedding $v_1:\, F \hookrightarrow \C$.

\subsection{Complex uniformisation}\label{subsec:C-uniformisation} Let $U = \prod_{\gq}U_\gq \subset \widehat{\CO}^\times$
be a compact open subgroup, such that $U_\gp$ is maximal. We consider the quotient
$$X_U(\C) := D^\times\backslash X\times \widehat{D}^\times/U,$$
where $X := \mathbf{P}^1(\C) - \mathbf{P}^1(\R) = \mathfrak{H}^+ \sqcup \mathfrak{H}^-$, and $\mathfrak{H}^-$ and $\mathfrak{H}^+$
are the lower and upper Poincar\'e half-planes. Since $D$ is a division algebra, $X_U(\C)$ is a Riemann surface. 

There is a right action of $\widehat{D}^\times$ on $X \times \widehat{D}^\times$ by conjugation. For each $g \in \widehat{D}^\times$,
this induces an isomorphism of complex curves
$$X_U(\C) \stackrel{\sim}{\to} X_{g^{-1}Ug}(\C).$$

By the strong approximation theorem, we have the following bijections
$$D_{+}^\times\backslash \widehat{D}^\times/U \simeq D^\times\backslash \{\pm 1\}\times\widehat{D}^\times/U \simeq 
F_{+}^\times\backslash \widehat{F}^\times/\Nrd(U).$$
By class field theory, there is a unique abelian extension $F_U$ of $F$ such that the Artin map induces an isomorphism
$$\Art_F:\,\Gal(F_U/F) \simeq F_{+}^\times\backslash \widehat{F}^\times/\Nrd(U).$$
So the set $F_{+}^\times\backslash \widehat{F}^\times/\Nrd(U)$ is a Galois set. Therefore, there is a finite \'etale scheme
$\mathscr{T}_U$ defined over $F$ such that 
$$\mathscr{T}_U(F_U) = \mathscr{T}_U(\overline{F}) = \mathscr{T}_U(\C) = F_{+}^\times\backslash \widehat{F}^\times/\Nrd(U).$$

Shimura~\cite{shi70} shows that $X_U(\C)$ admits a canonical model defined over $F$ (see also~\cite{del71}). Namely, we have the following result.

\begin{thm}\label{thm:canonical-models} There is a curve $X_U$ defined over $F$, called a {\rm canonical model}, which satisfies the following properties:
\begin{enumerate}[(i)]
\item The set of complex points of $X_U$ is $X_U(\C)$, i.e. $$(X_U \otimes_{F,v_1}\C)(\C) = X_U(\C).$$
\item For a compact open $U'\subset U$, the morphism $X_{U'}(\C) \to X_U(\C)$ is induced by an $F$-morphism $X_{U'} \to X_U$.
\item For each $g \in \widehat{D}^\times$, the morphism $X_U(\C) \to X_{g^{-1}Ug}(\C)$ is induced from a $F$-morphism $X_U \to X_{g^{-1}Ug}$.
\item The morphism $X_U(\C) \to \mathscr{T}_U(\C)$, has connected fibres, and is induced by a morphism of $F$-schemes
$X_U \to \mathscr{T}_U$. In particular, the group of connected component $\pi_0(X_U)$ is a finite \'etale group scheme over $F$ 
such that $\pi_0(X_U)(\C) = \pi_0(X_U(\C))= \mathscr{T}_U(\C)$, where $\pi_0(X_U(\C))$ is the group of connected components of $X_U(\C)$.
\end{enumerate}
\end{thm}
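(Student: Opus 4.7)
The plan is to follow Shimura's classical construction of canonical models through CM points and the reciprocity law at special points, as developed for quaternionic Shimura curves by Carayol~\cite{car86b} and Boutot--Carayol~\cite{bc91}. Because $D$ is a division algebra with $S_\infty = \{v_2,\ldots,v_g\}$, the reflex field of the Shimura datum attached to $(D^\times, X)$ is $v_1(F) \subseteq \C$, which we identify with $F$. The uniqueness of the $F$-rational descent will come from Zariski density of CM points together with Shimura reciprocity.

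First I would construct $X_U$. Fix a CM extension $E/F$ embedding into $D$ (which exists, since every prime in $S_f$ can be arranged to be inert or ramified in $E$). Each optimal embedding $E \hookrightarrow D$, together with a fixed point of the image of $E^\times$ acting on $X$, produces a dense set of CM points on $X_U(\C)$. Shimura's reciprocity law prescribes the Galois action of $\Gal(\Qbar/E)$ on these points: via $\Art_E$, an id\`ele $s \in \widehat{E}^\times$ acts by right multiplication on the $\widehat{D}^\times$-component by the image of $s$ under the natural inclusion $\widehat{E}^\times \hookrightarrow \widehat{D}^\times$. The main theorem of~\cite[\S 3]{shi70} (see also~\cite{del71, nek12, sij13}) then produces a unique $F$-scheme $X_U$ with $(X_U\otimes_{F,v_1}\C)(\C) = X_U(\C)$ on which this reciprocity holds at every CM point, which establishes (i).

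For (ii) and (iii), the complex morphisms $X_{U'}(\C) \to X_U(\C)$ for $U' \subset U$ and the Hecke maps $X_U(\C) \to X_{g^{-1}Ug}(\C)$ for $g \in \widehat{D}^\times$ both send CM points to CM points and commute with the $\widehat{E}^\times$-action by construction; uniqueness in the previous step then forces them to descend to $F$-morphisms. For (iv), strong approximation applied to the kernel of the reduced norm gives
$$\pi_0(X_U(\C)) = D_{+}^\times\backslash\widehat{D}^\times/U \simeq F_{+}^\times\backslash\widehat{F}^\times/\Nrd(U) \simeq \Gal(F_U/F),$$
where the first isomorphism is induced by $\Nrd$ and the second by $\Art_F$. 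The component map is thus identified with the natural map $X_U(\C) \to \mathscr{T}_U(\C)$; its fibres are single Riemann surfaces of the form $\Gamma\backslash\mathfrak{H}$, and its compatibility with Shimura reciprocity on $X_U$ and Artin reciprocity on $\mathscr{T}_U$ makes it $\Gal(\overline{F}/F)$-equivariant on CM points, hence induced by an $F$-morphism $X_U \to \mathscr{T}_U$.

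The principal obstacle is Shimura's descent argument itself: to promote a continuous $\Gal(\overline{F}/F)$-action on the Zariski-dense set of CM points into a genuine $F$-rational structure on the curve. This rigidity is precisely what the reciprocity law provides; alternatively, one can invoke Deligne's formalism of Shimura varieties~\cite{del71} attached to the datum $(D^\times, X)$ to realize $X_U$ as a moduli space of polarized Hodge structures with level-$U$ structure. Once the $F$-structure is in place, all four properties follow from the naturality of the CM-point construction.
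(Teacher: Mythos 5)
The paper's own proof is a one-line citation to Carayol~\cite[\S\S 1.1--1.2]{car86b}, where these properties of canonical models are simply listed, and your sketch unpacks exactly the classical Shimura--Deligne construction (reciprocity at CM points, density of their Hecke orbits, descent of the degeneracy and Hecke maps, strong approximation for $\pi_0$) that those references rest on, so you are taking essentially the same route with more detail. The only point worth tightening is that for a fixed $E$ and fixed level $U$ the CM points form a finite set, so the density needed for uniqueness must come from the full Hecke orbit of a special point (or from varying $E$) --- which your appeal to the Hecke action already supplies implicitly.
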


\begin{proof}This is essentially a summary of the properties of canonical models of Shimura curves listed in~\cite[\S\S 1.1 and 1.2]{car86b}. 
\end{proof}

Theorem~\ref{thm:canonical-models} (iv) is known as the Shimura reciprocity law, it implies that $X_U$ is an irreducible scheme, which is not geometrically 
irreducible in general. However, when $\Nrd(U) = \widehat{\CO}_F^\times$, then $X_U$ is geometrically irreducible since we assume that $F$ has narrow class
number one.

We define the {\it adelic Atkin-Lehner group} by $\widehat{W} := N_{\widehat{D}}(U)/\widehat{F}^\times U$. By making use of the weak approximation theorem,
one can show that $$\widehat{W} \simeq \prod_{\gq \in S_f \cup S_0}\Z/2\Z,$$
where $S_0$ is the set of primes where $U_\gq$ is non-maximal.

\begin{cor}\label{cor:global-atkin-lehner} The group $\widehat{W}$ acts on $X_U(\C)$. This action is induced from an action of $\widehat{W}$ on $X_U$
defined over $F$.  In particular, if $W'\subseteq \widehat{W}$ is a subgroup, then the quotient $X_U/W'$ is defined over $F$.
\end{cor}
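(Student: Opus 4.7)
The plan is to combine the complex-analytic construction of the action with the canonical-model machinery already recorded in Theorem~\ref{thm:canonical-models}.

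First, I would construct the action over $\C$. Given $g \in N_{\widehat{D}}(U)$, right conjugation sends a class $[(z, h)]\in X_U(\C) = D^\times\backslash X\times \widehat{D}^\times/U$ to $[(z, hg)]$; since $g^{-1}Ug = U$, this lands back in $X_U(\C)$ and hence defines a biholomorphism of $X_U(\C)$. Two elementary checks then show the action descends to $\widehat{W}$: if $g = fu$ with $f\in\widehat{F}^\times$ and $u\in U$, then right multiplication by $u$ is absorbed by the quotient by $U$ on the right, while right multiplication by $f$ (viewed as a central element of $\widehat{D}^\times$ via the inclusion $F\hookrightarrow D$) differs from left multiplication by $f^{-1}\in F^\times\subset D^\times$, which is also absorbed by the quotient on the left. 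Hence the action of $N_{\widehat{D}}(U)$ factors through $\widehat{W} = N_{\widehat{D}}(U)/\widehat{F}^\times U$.

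Next, to promote the action to one over $F$, I would invoke Theorem~\ref{thm:canonical-models}(iii): for each $g\in\widehat{D}^\times$, the map $X_U(\C)\to X_{g^{-1}Ug}(\C)$ induced by conjugation by $g$ is induced by an $F$-morphism $X_U\to X_{g^{-1}Ug}$. Specializing to $g\in N_{\widehat{D}}(U)$ gives an $F$-endomorphism $[g]_F\colon X_U\to X_U$ whose complex points realise the biholomorphism constructed above. The uniqueness of the canonical model (or, equivalently, the fact that a morphism of smooth projective $F$-curves is determined by its action on $\C$-points) forces $[gg']_F = [g]_F\circ [g']_F$ and $[fu]_F = \mathrm{id}$ for $f\in\widehat{F}^\times$, $u\in U$, so we obtain an action of $\widehat{W}$ on $X_U$ in the category of $F$-schemes.

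Finally, for any subgroup $W'\subseteq\widehat{W}$, I would take the quotient $X_U/W'$ in the category of $F$-schemes: since $W'$ is finite and $X_U$ is a quasi-projective $F$-scheme on which $W'$ acts (over $F$), the geometric quotient $X_U/W'$ exists as a quasi-projective $F$-scheme by standard results on quotients of quasi-projective schemes by finite groups (e.g.\ SGA~1 or Mumford's GIT in the tame setting, which applies here since we are in characteristic $0$). Its $\C$-points are precisely $X_U(\C)/W'$.

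There is no real obstacle: once the descent to $\widehat{W}$ is verified and Theorem~\ref{thm:canonical-models}(iii) is in hand, everything is formal. The only mildly delicate point is checking that $\widehat{F}^\times$ acts trivially on $X_U(\C)$, which requires remembering that the central $\widehat{F}^\times$ in $\widehat{D}^\times$ is reached by a global element $f\in F^\times$ only up to $\Nrd(U)$, so the triviality on $X_U(\C)$ really uses the quotient by both $D^\times$ on the left and $U$ on the right simultaneously.
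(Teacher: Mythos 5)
Your proposal is correct and follows the same route as the paper, which simply notes that each $g\in\widehat{W}$ gives an automorphism of $X_U(\C)$ and then invokes Theorem~\ref{thm:canonical-models}(iii) to descend it to $F$. You have merely filled in the details the paper leaves implicit (the factorisation through $\widehat{F}^\times U$, which in this setting uses the narrow class number one hypothesis, and the existence of the finite quotient as an $F$-scheme).
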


\begin{proof}Every element $g\in \widehat{W}$ defines an automorphism of $X_U(\C)$. By Theorem~\ref{thm:canonical-models} (iii),
this automorphism descends to $F$. 
\end{proof}

When there is an integral ideal $\gN$ coprime with the discriminant $\mathrm{disc}(D)$ of $\CO$, and an Eichler order $\CO_0(\gN) \subset \CO$
of level $\gN$ such that $U = \widehat{\CO_0(\gN)}^\times$, we will denote the Shimura curve $X_U$ by $X_0^D(\gN)$, or simply write $X_0^D(1)$
when $\gN = (1)$.

\subsection{Bruhat-Tits tree} Let $\mathcal{T}_\gp$ be the Bruhat-Tits tree attached to $\GL_2(F_\gp)$. 
Its set of vertices $\mathcal{V}(\mathcal{T}_\gp)$ consists of maximal $\CO_{F_\gp}$-orders in $\mathrm{M}_2(F_\gp)$, two vertices being adjacent if 
their intersection is an Eichler order of level $\gp$. Let $\vv{\mathcal{E}}(\mathcal{T}_\gp)$ denote the set of ordered edges of $\mathcal{T}_\gp$, 
i.e., the set of ordered pairs $(s, t)$ of adjacent vertices of $\mathcal{T}_\gp$. If $e = (s, t)$, the vertex $s$ is called the {\it source} of $e$ and the vertex $t$ 
is called its {\it target}; they are denoted by $s(e)$ and $t(e)$ respectively.

The Atkin-Lehner involution $\iota:\,\vv{\mathcal{E}}(\mathcal{T}_\gp) \to \vv{\mathcal{E}}(\mathcal{T}_\gp)$ sends the edge $e = (s, t)$ to the opposite
edge $\bar{e}$. We let $\mathcal{E}(\mathcal{T}_\gp) = \vv{\mathcal{E}}(\mathcal{T}_\gp)/\langle \iota \rangle$ be the set of non-oriented edges. 

The tree $\mathcal{T}_\gp$ is endowed with a natural left action of $\PGL_2(F_\gp)$ by isometries corresponding to conjugation
of maximal orders by elements of $\GL_2(F_\gp)$. This action is transitive on both  $\mathcal{V}(\mathcal{T}_\gp)$ and 
$\vv{\mathcal{E}}(\mathcal{T}_\gp)$.

\subsection{$\gp$-adic uniformisation}\label{subsec:l-adic-uniformisation}

Let $\overline{F_\gp}$ be an algebraic closure of $F_\gp$, and $\C_\gp := \widehat{\overline{F_\gp}}$ be a fixed completion of 
$\overline{F_\gp}$. Let $\widehat{\mathscr{H}}_\gp$ be $\gp$-adic upper half plane. This is the formal scheme over $\Spf(\CO_{F_\gp})$
defined in~\cite[\S 1.3]{bc91} by
$$\widehat{\mathscr{H}}_\gp := \mathbf{P}^1(\C_\gp) - \mathbf{P}^1(F_\gp).$$
The scheme $\widehat{\mathscr{H}}_\gp$ admits a natural action by the group $\GL_2(F_\gp)$, which factors through the adjoint group 
$\PGL_2(F_\gp)$. We let 
$$\widehat{\mathscr{H}}_\gp^{\rm ur} = \widehat{\mathscr{H}}_\gp\times_{\Spf(\CO_{F_\gp})}\Spf(\CO_{F_\gp}^{\rm ur}).$$

Let $B$ be the totally definite quaternion algebra defined over $F$ whose set of ramified {\it finite} places is $S_f \setminus \{\gp\}$ 
so that $B_\gp \simeq \mathrm{M}_2(F_\gp)$. (Note that this means that the set of ramified archimedian places of $B$ is $S_\infty \cup \{v_1\}$.)
We write $\widehat{B} = B_\gp \times B^{\gp}$ and $\widehat{D} = D_\gp \times D^{\gp}$, and we fix an isomorphism 
$\varphi:\, D^{\gp} \stackrel{\sim}{\to} B^{\gp}$. We let $K = K_\gp \times K^{\gp}$ be a compact open subgroup of $\widehat{B}^\times$ such that 
$K_\gp \simeq \GL_2(\CO_{F_\gp})$ and $\varphi(U^{\gp}) = K^{\gp}$. We also let 
$$K_\gp^0 := \left\{\begin{pmatrix} a& b \\ c & d \end{pmatrix} \in K_\gp : c \equiv 0 \bmod \gp \right\},$$
and $K_0(\gp) = K_\gp^0 \times K^\gp$. 

Since $U_\gp$ is the maximal compact open subgroup of $D_\gp^\times$, the norm map induces an isomorphism 
$D_\gp^\times/U_\gp \stackrel{\sim}{\to} F_\gp^\times/\CO_{F_\gp}^\times$ (see \cite[Chap. II, Lemme 1.5]{vig80}). 
The group $B_\gp^\times$ acts on $F_\gp^\times/\CO_{F_\gp}^\times$ through its reduced norm map $\Nrd:\, B_\gp^\times \to F_\gp^\times$. 
We obtain a corresponding action of $B_\gp^\times$ on $D_\gp^\times/U_\gp$. This, together with the isomorphism $\varphi$, gives an action
of $\widehat{B}^\times$ on $\widehat{D}^\times/U$. 

\begin{thm}[\v{C}erednik-Drinfel'd]\label{thm:cerednik-drinfeld}
There exist a model $\mathscr{M}$ of $X_U$ over $\CO_{F_\gp}$, and an isomorphism of formal schemes
$$\widehat{\mathscr{M}}^{\rm ur} = \widehat{\mathscr{M}}\times_{\Spf(\CO_{F_\gp})}\Spf(\CO_{F_\gp}^{\rm ur})
\simeq B^\times\backslash \widehat{\mathscr{H}}_\gp^{\rm ur} \times \widehat{D}^\times/U,$$
where $\widehat{\mathscr{M}}$ is the completion of $\mathscr{M}$ along its special fibre.
\end{thm}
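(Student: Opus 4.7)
The plan is to follow Drinfel'd's moduli-theoretic approach, in which the $\gp$-adic uniformization of $X_U$ falls out of the deformation theory of ``fake elliptic curves''. First I would realise $X_U$ as the coarse moduli space of triples $(A,\iota,\overline{\eta})$, where $A$ is an abelian variety with an action $\iota\colon \CO \hookrightarrow \End(A)$ of the quaternion order satisfying the appropriate Kottwitz determinant condition, and $\overline{\eta}$ is a level-$U$ structure. The hypotheses that $D$ is ramified at $\gp$ and at all but one archimedean place guarantee that the corresponding PEL Shimura variety realises $X_U$ via the embedding $v_1$, so that the model $\mathscr{M}$ to be constructed is the integral model of this moduli problem over $\CO_{F_\gp}$.

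Next I would reduce the $\gp$-adic deformation theory to a problem about formal groups. By the Serre-Tate theorem, infinitesimal deformations of $(A,\iota)$ over $\CO_{F_\gp}$-algebras are controlled by those of the $\gp$-divisible group $A[\gp^\infty]$; since $D_\gp$ is a division quaternion algebra, this $\gp$-divisible group carries the structure of a \emph{special formal $\CO_\gp$-module} of height $4$ in Drinfel'd's sense, while the polarisation and the $\CO$-action away from $\gp$ are rigid. I would then invoke Drinfel'd's representability theorem: the formal moduli space of special formal $\CO_\gp$-modules equipped with a quasi-isogeny to a fixed module over $\overline{\F}_\gp$ is $\widehat{\mathscr{H}}_\gp^{\rm ur}\times \Z$, where the $\Z$-factor records the height of the quasi-isogeny and Frobenius of $F_\gp^{\rm ur}$ shifts it.

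The conceptual heart of the argument is the \emph{interchange of invariants}. On the closed fibre every object is isogenous to a single supersingular model, whose global quasi-endomorphism algebra, by the Honda-Tate computation, is the quaternion $F$-algebra ramified at $v_1$ and at $S_f\setminus\{\gp\}$, namely $B$. Rigidifying the global moduli problem by a quasi-isogeny to this fixed supersingular object therefore turns the $D$-uniformisation problem into a $B$-uniformisation problem: the group of global automorphisms of the rigidification is $B^\times$, acting on the $\gp$-component via $B_\gp^\times\simeq \GL_2(F_\gp)$ on $\widehat{\mathscr{H}}_\gp^{\rm ur}$ (combined with a translation on the $\Z$-factor by $\ord_\gp\circ\Nrd$) and on the away-from-$\gp$ component via $B^{\gp,\times}\simeq D^{\gp,\times}$ on $\widehat{D}^\times/U$. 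Taking the global $B^\times$-quotient of the resulting local uniformising tower yields the stated isomorphism over $\Spf(\CO_{F_\gp}^{\rm ur})$; finally, Galois descent along $F_\gp^{\rm ur}/F_\gp$, made possible because Frobenius acts compatibly through the $\Z$-factor, produces the model $\mathscr{M}$ over $\Spf(\CO_{F_\gp})$.

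The main obstacle is Drinfel'd's representability theorem together with the correct identification of the global quasi-endomorphism algebra in the closed fibre as $B$ rather than $D$. The former requires a delicate study of special formal modules, via displays or Dieudonn\'e theory, to show that their universal deformation space is precisely $\widehat{\mathscr{H}}_\gp$. The latter — the ``switch'' from $D$ to its Jacquet-Langlands partner $B$ — is what makes the theorem surprising and nontrivial; verifying it amounts to computing the local invariants of the endomorphism algebra of a supersingular fake elliptic curve at every place, checking that the invariants at $\gp$ and $v_1$ are exchanged relative to $D$, and tracing through the effect this has on the adelic group acting on the local-global product.
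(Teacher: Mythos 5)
The paper does not prove this theorem at all: it is quoted as a black box with a citation to Boutot--Zink \cite[Theorem 3.1]{bz}, so the relevant comparison is with the argument in that reference (and in Carayol \cite{car86b}), which is indeed the Drinfel'd-style proof you outline. Your sketch captures the two genuinely hard inputs correctly --- Drinfel'd's representability theorem identifying the deformation space of special formal $\CO_{D_\gp}$-modules with $\widehat{\mathscr{H}}_\gp^{\rm ur}\times\Z$, and the interchange of invariants at $\gp$ and $v_1$ that replaces $D$ by its totally definite partner $B$ --- and the descent along $F_\gp^{\rm ur}/F_\gp$ via the Frobenius action on the $\Z$-factor is the right way to recover $\mathscr{M}$ over $\CO_{F_\gp}$.

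There is, however, a genuine gap in your opening step, and it is precisely the point where the general totally real case differs from $F=\Q$. Here $[F:\Q]=8$, and the Shimura curve attached to $\mathrm{Res}_{F/\Q}(D^\times)$ with $D$ ramified at all but one archimedean place is \emph{not} of PEL type: there is no symplectic $(D,*)$-module whose similitude group realises this Shimura datum, so $X_U$ is not the coarse moduli space of triples $(A,\iota,\overline{\eta})$ with a Kottwitz condition as you assert. The standard remedy, due to Carayol and carried out integrally by Boutot--Zink, is to introduce an auxiliary CM extension and a unitary similitude group $G'$ whose Shimura variety \emph{is} PEL and has the same geometrically connected components as $X_U$; one proves Rapoport--Zink uniformisation for that auxiliary variety by exactly the Serre--Tate/special-formal-module argument you describe, and then transfers the statement back to $X_U$ and its canonical model. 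Without this detour your first paragraph has no object to deform. A second, minor, point: the quasi-endomorphism algebra of the basic (supersingular) isogeny class is ramified at \emph{all} archimedean places, i.e.\ at $S_\infty\cup\{v_1\}$ as well as at $S_f\setminus\{\gp\}$; your phrasing ``ramified at $v_1$ and at $S_f\setminus\{\gp\}$'' drops the seven places in $S_\infty$, although the algebra you name, $B$, is the correct one.
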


\begin{proof} See~\cite[Theorem 3.1]{bz}.
\end{proof}

\subsection{The dual graph} The {\it dual graph} associated to $B^\times\backslash \widehat{\mathscr{H}}_\gp^{\rm ur} \times \widehat{D}^\times/U$ 
is the weighted graph $$\mathcal{G} := B^\times\backslash \mathcal{T}_\gp \times \widehat{D}^\times/U.$$
The vertices of $\mathcal{V}(\mathcal{G})$ and oriented edges $\vv{\mathcal{E}}(\mathcal{G})$ of $\mathcal{G}$ are given 
respectively by
$$\mathcal{V}(\mathcal{G}) := B^\times\backslash \mathcal{V}(\mathcal{T}_\gp) \times \widehat{D}^\times/U,\,\,\text{and}\,\,
\vv{\mathcal{E}}(\mathcal{G}) := B^\times\backslash \vv{\mathcal{E}}(\mathcal{T}_\gp) \times \widehat{D}^\times/U.$$
We define the weight of a vertex $v \in \mathcal{V}(\mathcal{G})$ to be $\#\Stab_{B^\times/F^\times}(v)$, and the weight of
an edge $e \in \vv{\mathcal{E}}(\mathcal{G})$ to be $\#\Stab_{B^\times/F^\times}(e)$. For a vertex $v$, we let $\Star(v)$
denote the set of all edges containing $v$.

\begin{prop}\label{prop:bi-partite-graph} The maps
\begin{align*}
\vartheta_1:\, (B_\gp^\times/F_\gp^\times K_\gp) \times (D_\gp^\times/U_\gp) \times ({D^{\gp}}^\times\!/U^{\gp}) &\to (B_\gp^\times/K_\gp) \times \Z \times ({B^\gp}^\times\!/K^\gp)\\
(x_\gp, y_\gp, y^\gp) &\mapsto (x_\gp, \ord_\gp(\Nrd(y_\gp)), \varphi(y^\gp)),\\
\vartheta_2:\, (B_\gp^\times/F_\gp^\times K_\gp^0) \times (D_\gp^\times/U_\gp) \times ({D^{\gp}}^\times\!/U^{\gp}) &\to (B_\gp^\times/K_\gp^0) \times \Z \times ({B^\gp}^\times\!/K^\gp)\\
(x_\gp, y_\gp, y^\gp) &\mapsto (x_\gp, \ord_\gp(\Nrd(y_\gp)), \varphi(y^\gp))
\end{align*}
induce an isomorphism of bipartite graphs
\begin{align*}
\mathcal{V}(\mathcal{G}) &= B^\times\backslash \mathcal{V}(\mathcal{T}_\gp) \times \widehat{D}^\times/U \stackrel{\sim}{\to}
(B^\times \backslash \widehat{B}^\times/K) \times \Z/2\Z,\\
\vv{\mathcal{E}}(\mathcal{G})&= B^\times\backslash \vv{\mathcal{E}}(\mathcal{T}_\gp) \times \widehat{D}^\times/U \stackrel{\sim}{\to}
(B^\times \backslash \widehat{B}^\times/K_0(\gp)) \times \Z/2\Z\,
\end{align*} 
as follows: We write 
$\mathcal{V}(\mathcal{G}) = \mathcal{V} \sqcup \mathcal{V}' \simeq B^\times \backslash \widehat{B}^\times/K \sqcup 
B^\times \backslash \widehat{B}^\times/K$, and we let the adjacency matrix in the basis $\mathcal{V} \cup \mathcal{V}'$ be
given by the matrix
$$\begin{bmatrix} 0 & T_\gp \\ T_\gp &0 \end{bmatrix},$$
where $T_\gp$ is the Hecke operator at $\gp$ acting on the Brandt module $M := \Z[B^\times \backslash \widehat{B}^\times/K]$. 
In that identification, the action of the Atkin-Lehner involution $w_\gp$ on $\mathcal{V}(\mathcal{G})$ is given by the matrix
$$\begin{bmatrix} 0 & \mathbf{1}_M \\ \mathbf{1}_M &0 \end{bmatrix}.$$
\end{prop}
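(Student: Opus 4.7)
The plan is to combine transitivity of the $B_\gp^\times$-action on the Bruhat-Tits tree $\mathcal{T}_\gp$ with the norm identifications at $\gp$ and the fixed isomorphism $\varphi:\,D^{\gp}\stackrel{\sim}{\to}B^{\gp}$ to convert all of the $D$-data into $B$-data, and then to track how the bipartite $\Z/2\Z$ factor emerges from the tree.

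Since $B_\gp^\times\simeq \GL_2(F_\gp)$ acts transitively on $\mathcal{V}(\mathcal{T}_\gp)$ and on $\vv{\mathcal{E}}(\mathcal{T}_\gp)$ with stabilizers $F_\gp^\times K_\gp$ and $F_\gp^\times K_\gp^0$, respectively, we first identify $\mathcal{V}(\mathcal{T}_\gp)\simeq B_\gp^\times/F_\gp^\times K_\gp$ and $\vv{\mathcal{E}}(\mathcal{T}_\gp)\simeq B_\gp^\times/F_\gp^\times K_\gp^0$. At $\gp$, since $D_\gp$ is a division algebra, the reduced norm gives $D_\gp^\times/U_\gp\stackrel{\sim}{\to}F_\gp^\times/\CO_{F_\gp}^\times\stackrel{\sim}{\to}\Z$ via $\ord_\gp$; off $\gp$, $\varphi$ gives $D^{\gp,\times}/U^\gp\simeq B^{\gp,\times}/K^\gp$. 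This is precisely the content of the maps $\vartheta_1,\vartheta_2$, which are manifestly $B^\times$-equivariant by construction.

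The key point for the bipartite factor is that the natural map $B_\gp^\times/K_\gp\to (B_\gp^\times/F_\gp^\times K_\gp)\times\Z$, $x_\gp K_\gp\mapsto ([x_\gp],\ord_\gp(\Nrd(x_\gp)))$, is injective with image exactly those pairs $(v,n)$ whose parity matches a canonical bipartite coloring $c:\mathcal{V}(\mathcal{T}_\gp)\to\Z/2\Z$. Indeed, two lifts of a fixed vertex differ by an element of $F_\gp^\times K_\gp$ whose reduced norm has valuation in $2\Z$, since $\Nrd(K_\gp)=\CO_{F_\gp}^\times$. Splitting the target by the residue of $n-c(v)\bmod 2$ then yields $(B_\gp^\times/F_\gp^\times K_\gp)\times\Z\simeq (B_\gp^\times/K_\gp)\times\Z/2\Z$, and combining with the $\varphi$-identification gives $\mathcal{V}(\mathcal{T}_\gp)\times\widehat{D}^\times/U\simeq (\widehat{B}^\times/K)\times\Z/2\Z$. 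The same argument with $K_\gp^0$ in place of $K_\gp$ handles the oriented edges. Furthermore, for $b\in B^\times$ the action on $(v,n)$ shifts both $c(v)$ and $n$ by $\ord_\gp(\Nrd(b))$, so $n-c(v)\bmod 2$ is preserved; the $\Z/2\Z$ factor therefore descends to the $B^\times$-quotient, yielding the stated bijections on vertices and oriented edges.

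It remains to identify the adjacency and the Atkin-Lehner action. The $q+1$ neighbors of a vertex in $\mathcal{T}_\gp$ are in bijection with $K_\gp/K_\gp^0$, equivalently with the coset space $K_\gp\pi_\gp K_\gp/K_\gp$ for a uniformizer $\pi_\gp$; this is exactly the double-coset data defining the Hecke operator $T_\gp$ on the Brandt module $M=\Z[B^\times\backslash\widehat{B}^\times/K]$, so the bipartite adjacency is encoded by $\bigl[\begin{smallmatrix} 0 & T_\gp \\ T_\gp & 0\end{smallmatrix}\bigr]$. The Atkin-Lehner involution $w_\gp$ corresponds to multiplication by a uniformizer $\pi_\gp\in D_\gp^\times$, which shifts $\ord_\gp(\Nrd(y_\gp))$ by one and hence swaps the two color classes while acting trivially on the $\widehat{B}^\times/K$ component, giving the matrix $\bigl[\begin{smallmatrix} 0 & \mathbf{1}_M \\ \mathbf{1}_M & 0\end{smallmatrix}\bigr]$. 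The main subtlety I anticipate is pinning down the bipartite coloring $c$ canonically (up to a base-vertex choice) and verifying equivariance of $\vartheta_1,\vartheta_2$ under both the $B^\times$- and the Atkin-Lehner actions, since a sloppy normalization can easily conflate the genuine $\Z/2\Z$ factor with a trivial one.
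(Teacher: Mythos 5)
Your argument is correct, and it is essentially the standard proof: the paper itself gives no argument for this proposition, deferring entirely to Kurihara, Nekov\'a\v{r} and Sijsling, and what you write is precisely the argument found there --- transitivity of $B_\gp^\times$ on vertices and oriented edges with stabilizers $F_\gp^\times K_\gp$ and $F_\gp^\times K_\gp^0$, the identification $D_\gp^\times/U_\gp\simeq\Z$ via $\ord_\gp\circ\Nrd$, the parity invariant $n-c(v)\bmod 2$ (which, being defined through the reduced norm, is already canonical, so the normalization worry you raise at the end is resolved by your own construction), and the uniformizer of $D_\gp$ accounting for the swap of the two color classes under $w_\gp$.
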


\begin{proof}
See~\cite[Propositions 3.1.8 and 3.1.9]{sij13}, \cite[\S 1.5]{nek12} or \cite[\S 4]{kur79}.
\end{proof}

\begin{rem}\rm In the isomorphism of Proposition~\ref{prop:bi-partite-graph}, the set of non-oriented edges is given by
$$\mathcal{E}(\mathcal{G}) = B^\times\backslash \mathcal{E}(\mathcal{T}_\gp) \times \widehat{D}^\times/U \simeq
B^\times \backslash \widehat{B}^\times/K_0(\gp).$$
\end{rem}

The following result is an essential ingredient in the description of the special fibre of the \v{C}erednik-Drinfel'd model described in
Theorem~\ref{thm:cerednik-drinfeld}. As we will see later, it is also useful in understanding the automorphism group of the curve $X_U$.

\begin{thm}\label{thm:special-fibre} 
Let $\mathscr{M}$ be the scheme in Theorem~\ref{thm:cerednik-drinfeld}. Then, we have the following:
 \begin{enumerate}[(i)]
\item $\mathscr{M}\otimes_{\CO_{F_\gp}}\!\CO_{F_{\gp^2}}$ is a normal, proper, flat and semistable scheme over $\CO_{F_{\gp^2}}$.
\item The special fibre of $\mathscr{M}\otimes_{\CO_{F_\gp}}\!\CO_{F_{\gp^2}}$ is reduced. Its components are rational curves, and all
its singular points are ordinary double points. 
\item The weighted dual graph associated to $\mathscr{M}\otimes_{\CO_{F_\gp}}\!\CO_{F_{\gp^2}}$ is the graph $\mathcal{G}$ 
described in Proposition~\ref{prop:bi-partite-graph}.
\item Let $\mathcal{H}$ be a connected component of $\mathcal{G}$, and $\mathscr{M}_{\mathcal{H}}$ the corresponding irreducible
component of $\mathscr{M}$. Then the arithmetic genus of $\mathscr{M}_{\mathcal{H}}$ is given by the Betti number 
$1 + \#\mathcal{E}(\mathcal{H})- \#\mathcal{V}(\mathcal{H})$. 
\end{enumerate}
\end{thm}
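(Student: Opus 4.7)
The plan is to derive all four statements from the Čerednik--Drinfel'd uniformisation of Theorem~\ref{thm:cerednik-drinfeld} by working locally on the Drinfel'd $\gp$-adic upper half plane and then taking the quotient. The first step is to recall Mumford's/Deligne's description of the formal scheme $\widehat{\mathscr{H}}_\gp$: it admits a stable formal model over $\Spf(\CO_{F_\gp})$ whose special fibre is an infinite tree-like union of projective lines $\mathbf{P}^1_{\overline{k(\gp)}}$, indexed by $\mathcal{V}(\mathcal{T}_\gp)$, meeting transversally at ordinary double points indexed by $\vv{\mathcal{E}}(\mathcal{T}_\gp)/\langle\iota\rangle$. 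This is a classical fact for which I would cite~\cite{bc91} or~\cite{bz}. All of the asserted local properties (normality, flatness, semistability, reducedness of the special fibre, rationality of components, and the double-point nature of singularities) already hold for $\widehat{\mathscr{H}}_\gp$ itself; the issue is whether they are preserved by the quotient in Theorem~\ref{thm:cerednik-drinfeld}.

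To carry this through, I would first explain the need to base-change to $\CO_{F_{\gp^2}}$: the Galois action of $\Gal(F_{\gp^2}/F_\gp)$ on $\widehat{\mathscr{H}}_\gp^{\rm ur}$ involves a Frobenius twist that swaps the two $B^\times$-orbits of vertices of $\mathcal{T}_\gp$ (corresponding to $\ord_\gp(\Nrd(\cdot))$ modulo $2$), so the irreducible components of the special fibre are not all individually defined over $k(\gp)$, but they are after extending to the quadratic unramified extension. For part (i), since $B^\times$ acts properly discontinuously on $\widehat{\mathscr{H}}_\gp^{\rm ur}\times\widehat{D}^\times/U$ with finite stabilisers (the group $B^\times/F^\times$ acts on the tree with finite stabilisers because $B$ is totally definite and $U$ is compact open), the quotient inherits normality, flatness and semistability. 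Properness then follows because the double coset space $B^\times\backslash\widehat{D}^\times/U$ is finite and the quotient of each component of $\widehat{\mathscr{H}}_\gp^{\rm ur}$ by the finite stabiliser is proper.

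For parts (ii) and (iii), the key observation is that taking the quotient of a semistable formal scheme by a discontinuous group action with finite stabilisers preserves ordinary double points and produces, as dual graph, exactly the quotient graph. Combining this with the Bruhat--Tits description, the dual graph of the special fibre of $\mathscr{M}\otimes\CO_{F_{\gp^2}}$ is $B^\times\backslash\mathcal{T}_\gp\times\widehat{D}^\times/U = \mathcal{G}$, with the weights of vertices and edges being precisely the orders of the respective stabilisers in $B^\times/F^\times$. The identification with the bipartite graph of Proposition~\ref{prop:bi-partite-graph} then gives (iii). Finally, for (iv), I would use the standard formula for the arithmetic genus of a semistable curve whose components are rational: $p_a = \sum_v p_a(C_v) + b_1(\mathcal{H})$, where $b_1(\mathcal{H})$ is the first Betti number of the dual graph. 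Since all components are $\mathbf{P}^1$ (so $p_a(C_v)=0$) and $b_1(\mathcal{H}) = \#\mathcal{E}(\mathcal{H}) - \#\mathcal{V}(\mathcal{H}) + 1$ for a connected graph $\mathcal{H}$, the claimed genus formula follows.

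The main obstacle is conceptual rather than technical: one must be careful that the formal quotient in Theorem~\ref{thm:cerednik-drinfeld} really corresponds, in the category of schemes, to the naive quotient of dual graphs, and that the weights attached to vertices and edges of $\mathcal{G}$ via stabilisers match the weights coming from the ramification of the component maps of $\widehat{\mathscr{H}}_\gp^{\rm ur}$ onto $\mathscr{M}\otimes\CO_{F_{\gp^2}}$. Since all of this is standard material in the \v{C}erednik--Drinfel'd theory, I would simply refer to~\cite{bc91}, \cite{bz}, \cite{kur79} and~\cite[\S 1.5]{nek12} for the detailed verifications and present the proof as a summary.
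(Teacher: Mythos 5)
Your proposal is correct and matches the paper's treatment: the paper's proof of this theorem is simply a citation to \cite[Proposition 1.5.5]{nek12} and \cite[Proposition 3.2]{kur79}, and your sketch of the Mumford--Drinfel'd model, the quotient by $B^\times$, the need for the base change to $\CO_{F_{\gp^2}}$, and the Betti-number genus formula is the standard argument underlying those references, to which you also ultimately defer. No gap to report.
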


\begin{proof}
See~\cite[Proposition 1.5.5]{nek12} or~\cite[Proposition 3.2]{kur79}.
\end{proof}

\subsection{Special fibre of $\mathscr{M}\otimes_{\CO_{F_\gp}}\!\CO_{F_{\gp^2}}$} The curve $\mathscr{M}$ is an {\it admissible} curve over $\CO_{F_\gp}$
in the following sense:
\begin{enumerate}[(i)]
\item $\mathscr{M}\otimes_{\CO_{F_\gp}}\!\CO_{F_{\gp^2}}$ is a normal, proper, flat and semistable scheme over $\CO_{F_{\gp^2}}$. Each irreducible
component has a smooth generic fibre. 
\item The completion of the local ring of $\mathscr{M}\otimes_{\CO_{F_\gp}}\!\CO_{F_{\gp^2}}$ at each of its singular points $x$ is isomorphic, as an 
$\CO_{F_\gp}$-algebra, to $\CO_{F_\gp}[[X,Y]]/(XY - \varpi_\gp^w)$, where $\varpi_\gp$ is a uniformising element at $\gp$, and $w = w(x) \in \{1, 2, 3,\,\ldots\}$.
\item The special fibre $\mathscr{M}\otimes_{\CO_{F_\gp}} k(\gp)$ is reduced; the normalisation of each of its irreducible components is isomorphic to
$\mathbf{P}^1(k(\gp))$; its only singular points are ordinary double points, where $k(\gp)$ is the residue field of $\CO_{F_{\gp^2}}$.
\end{enumerate}
The dual graph encodes the following combinatoric data of the special fibre.

\begin{enumerate}
\item[(iv)] Each vertex $v \in \mathcal{V}(\mathcal{G})$ corresponds to an irreducible component $C_v$ of the special fibre $\mathscr{M}\otimes_{\CO_{F_\gp}} k(\gp)$. 
\item[(v)] Each edge $e = \{v, v'\} \in \mathcal{E}(\mathcal{G})$ corresponds to a singular point in $x_e \in C_v \cap C_{v'}$. The completion of local ring at $x_e$ is of the form 
$\CO_{F_\gp}[[X,Y]]/(XY - \varpi_\gp^w)$, where $w = w(e)$ is the weight of the edge $e$.
\end{enumerate}
The above description can be found in~\cite{nek12, kur79} and references therein.

\subsection{Automorphism groups}\label{subsec:automorphism-group}
An {\it automorphism of weighted graph} $\mathcal{G}$ is an automorphism of graphs
which preserves the weights of the edges. We will denote the group of such automorphisms by $\Aut(\mathcal{G})$. We
note that there is a natural inclusion $\Aut(\mathcal{G}) \subset \Aut^s(\mathcal{G})$, where $\Aut^s(\mathcal{G})$ is the 
automorphism group of the underlying simple graph to $\mathcal{G}$.

For the next statement, we recall the notion of admissibility from~\cite{kr08}. We say that an element $\omega \in \Aut(\mathcal{G})$
is {\it admissible} if there is no vertex $v \in \mathcal{V}(\mathcal{G})$ fixed by $\omega$ such that $\Star(v)$ has at least $3$
edges also fixed by $\omega$. We say that a subgroup $H \subset \Aut(\mathcal{G})$ is {\it admissible} if every non-trivial element 
$\omega \in H$ is admissible.

\begin{prop}\label{prop:quotient-graph}
Let $W' \subset \widehat{W}$ be a subgroup. Then, we have the following:
\begin{enumerate}[(1)]
\item The dual graph of $(\mathscr{M}/W')\otimes_{\CO_{F_\gp}}\!\CO_{F_{\gp^2}}$ is the graph $\mathcal{G}' = (\mathcal{G}/W')^*$,
where ${}^*$ means we remove all loops from the quotient graph $\mathcal{G}/W'$.
\item Assume that the genus of $X_U/W'$ is at least $2$, and let $\mathcal{G}_{st}$ be the dual graph of the stable model 
$(\mathscr{M}/W')_{st}$ of $\mathscr{M}/W'$. Then there is a natural injection $\varrho:\,\Aut(X_U/W') \hookrightarrow \Aut(\mathcal{G}_{st})$
whose image $\mathrm{im}(\varrho)$ lies in an admissible subgroup.
\end{enumerate}
\end{prop}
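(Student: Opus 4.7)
My plan is to handle the two parts separately, with Part~(2) being the harder one. For Part~(1), I would begin from Theorem~\ref{thm:cerednik-drinfeld}, which produces the integral model $\mathscr{M}$ and identifies the dual graph of $\mathscr{M}\otimes_{\CO_{F_\gp}}\CO_{F_{\gp^2}}$ with $\mathcal{G}$. By Corollary~\ref{cor:global-atkin-lehner}, the action of $W'\subseteq \widehat{W}$ on $X_U$ extends to an action on $\mathscr{M}$ (indeed, the Atkin--Lehner action on the right-hand side of Theorem~\ref{thm:cerednik-drinfeld} is induced by left multiplication on $\widehat{D}^\times/U$ and is visible on the model). Formation of the special fibre and its dual graph commutes with the quotient by the finite group $W'$, so vertices and edges of the dual graph of $(\mathscr{M}/W')\otimes\CO_{F_{\gp^2}}$ are $W'$-orbits of vertices and edges of $\mathcal{G}$. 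The only subtlety is when $w \in W'$ fixes a node $x_e$ but swaps the two components meeting there: the local equation $XY - \varpi^{w(e)}$ is preserved by the involution $X \leftrightarrow Y$, whose invariant subring $\CO_{F_\gp}[[X+Y,XY]]/(XY-\varpi^{w(e)}) = \CO_{F_\gp}[[X+Y]]$ is regular (in residue characteristic away from $2$; otherwise one works with the normalization of the quotient). Hence an inverted edge descends to a loop in $\mathcal{G}/W'$ that does not correspond to a singularity of $\mathscr{M}/W'$, and must be removed, giving the dual graph $(\mathcal{G}/W')^*$.

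For Part~(2), since $X_U/W'$ has genus at least $2$ it admits a unique stable model $(\mathscr{M}/W')_{st}$ over $\CO_{F_\gp}$. By universality, every $\omega \in \Aut(X_U/W')$ extends uniquely to an automorphism of $(\mathscr{M}/W')_{st}$, which restricts to a permutation of the irreducible components and nodes of the special fibre; this defines the homomorphism $\varrho$ to $\Aut(\mathcal{G}_{st})$. For injectivity, suppose $\omega \in \Ker(\varrho)$. Then on every component $C_v \simeq \mathbf{P}^1_{k(\gp)}$ the map $\omega$ fixes all special points; since stability forces at least $3$ special points on each $C_v$, the restriction $\omega|_{C_v}$ is an automorphism of $\mathbf{P}^1$ fixing at least $3$ points, hence the identity. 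Thus $\omega$ is the identity on the special fibre, and rigidity of stable curves (\cite{dm69}) then gives $\omega = \text{id}$ on the whole stable model, hence on $X_U/W'$.

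To verify admissibility of the image, I would argue by contradiction: suppose $\omega \neq \text{id}$ is non-admissible, so there is a vertex $v$ fixed by $\omega$ such that at least $3$ edges of $\Star(v)$ are also fixed. Then $\omega|_{C_v}$ is an automorphism of $\mathbf{P}^1$ fixing at least $3$ points, so $\omega|_{C_v} = \text{id}$. The critical propagation step is to show $\omega$ is the identity on each adjacent component $C_{v'}$: at a fixed node $x_e$ on $C_v$ the completed local ring is $\CO_{F_\gp}[[X,Y]]/(XY - \varpi^{w(e)})$, and an automorphism preserving both branches and acting as the identity on the $C_v$-branch must act as the identity on the $C_{v'}$-branch as well, by the local rigidity lemma at ordinary double points in~\cite{kr08}. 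Hence $\omega$ is the identity on a formal neighborhood of $x_e$ inside $C_{v'}$, and by irreducibility of $C_{v'}$ on all of $C_{v'}$. Connectedness of the special fibre spreads triviality of $\omega$ to every component, contradicting $\omega \neq \text{id}$. The hardest step is the local rigidity at nodes, for which I would follow~\cite{kr08} closely and adapt it to our setting (where the residue characteristic may be $2$, so some care is needed at loops and at nodes of even thickness).
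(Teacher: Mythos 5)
Your Part~(1) and the injectivity of $\varrho$ in Part~(2) are fine and consistent with the paper, which simply invokes general properties of Mumford curves for (1) and \cite[Lemmas 1.12 and 1.16]{dm69} together with the universal property of the stable model for the injection. The genuine problem is the propagation step in your admissibility argument. The ``local rigidity at ordinary double points'' you invoke is false. On $R=\CO_{F_\gp}[[X,Y]]/(XY-\varpi_\gp^{w})$ the assignment $X\mapsto (1+Y)X$, $Y\mapsto (1+Y)^{-1}Y$ is an $\CO_{F_\gp}$-algebra automorphism: it fixes $XY=\varpi_\gp^{w}$, hence descends to $R$, and it is the identity on $\mathfrak{m}/\mathfrak{m}^2$, hence bijective. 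It preserves both branches and induces the identity on $R/(Y,\varpi_\gp)=k(\gp)[[X]]$, yet on the other branch it acts by $Y\mapsto Y/(1+Y)\neq Y$. The relation $\sigma(X)\sigma(Y)=XY$ only forces the derivative along the second branch at the node to be $1$; so being the identity on $C_v$ merely makes $\omega|_{C_{v'}}$ a \emph{parabolic} element of $\PGL_2(k(\gp))$ fixing $x_e$, not the identity. Worse, the residue characteristic here is $2$, where parabolic elements such as $z\mapsto z+1$ have order $2$, so you cannot rescue the step by appealing to the finite order of $\omega$. This is also a reason to be wary of quoting \cite{kr08} for an unexamined local lemma: the paper itself remarks that \cite[Proposition 3.4]{kr08} is incorrect as stated.

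The paper closes this gap globally rather than node by node. Up to the point where a fixed vertex $v$ with three fixed edges in $\Star(v)$ forces $\omega|_{C_v}=\mathrm{id}$, your argument and the paper's agree. The paper then argues that such an $\omega$ would have to fix more than $2g(X_U/W')+2$ points of the Riemann surface $(X_U/W')(\C)$, contradicting the classical bound on the number of fixed points of a non-trivial automorphism of a compact Riemann surface of genus at least $2$; hence $\omega=\mathrm{id}$ and every non-trivial element of $\image(\varrho)$ is admissible. You need to replace your propagation across nodes by a global input of this kind; as written, the key step of Part~(2) does not go through.
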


\begin{proof}
Part (1) follows from general properties of Mumford curves. From~\cite[Lemmas 1.12 and 1.16]{dm69}, and universal properties of stable
models, there is an injection $\varrho:\,\Aut(X_U/W') \hookrightarrow \Aut(\mathcal{G}_{st})$. To prove Part (2), we only need to show that
every non-trivial element in the image of $\varrho$ is admissible. To this end, let $\omega \in \Aut(X_U/W')$ be such that $\varrho(\omega)$ fixes
a vertex $v$, and at least 3 edges in $\Star(v)$. Then, since every automorphism of the projective line, which fixes at least $3$ points is the identity, 
the restriction $\omega|_{C_v}$ is the identity, where $C_v$ is the irreducible component associated to $v$. This would imply that, as an automorphism 
of the Riemann surface $(X_U/W')(\C)$, $\omega$ fixes more than $2g(X_U/W') + 2$ points, where $g(X_U/W')$ is the genus of $X_U/W'$. 
Hence $\omega$ must be the identity. Therefore, if $\omega$ is non-trivial, then $\varrho(\omega)$ must be admissible. 
\end{proof}

\section{\bf The hyperelliptic Shimura quotient curve}\label{sec:gross-curve}

\subsection{The quaternion algebra}\label{subsec:quat-algs}
Let $F = \Q(\alpha) = \Q(\zeta_{32}+\zeta_{32}^{-1})$ be the maximal totally real subfield of the cyclotomic field of the $32$nd roots of unity. 
This field is defined by the polynomial $x^8 - 8x^6 + 20x^4 - 16x^2 + 2$. Let $\sigma$ be a generator of $\Gal(F/\Q)$. Let $\CO_F$ be the ring 
of integers of $F$. Let  $v_1,\,\ldots,v_8$ be the real places of $F$. We consider the quaternion algebra $D/F$ ramified at $v_2,\ldots, v_8$ and 
the unique prime $\gp$ above $2$. More concretely, we have $D = \left(\frac{u, -1}{F}\right)$, where $u = -\alpha^2 + \alpha$ has signature 
$(+, -,\ldots,-)$. Let $\CO$ be the maximal order in $D$ given by
$$\CO := \CO_F[1, i, \frac{(\alpha^7 + \alpha^6 + \alpha^4 + 1) + \alpha^7i + j}{2}, \frac{(\alpha^7 + \alpha^6 + \alpha^4 + 1)i + k}{2}].$$ 

We also let $B/F$ be the totally definite quaternion algebra ramified exactly 
at all the real places $v_1,\ldots,v_8$, and fix a maximal order $\CO_B$ in $B$. Both these orders were computed using the
Quaternion Algebras Package in Magma~\cite{magma} implemented by Voight~\cite{voi05}).

\subsection{The CM field and its embedding}\label{subsec:CM-extension}
We recall the following diagram
\begin{eqnarray*}
\begin{tikzcd}
&&\Q(\zeta_{64})&&\\
\Q(\zeta_{64})^{+}\arrow[-, "2"]{urr}&&\Q(i(\zeta_{64} + \zeta_{64}^{-1}))\arrow[-, "2"']{u}&&\Q(\zeta_{32})\arrow[-, "2"']{ull}\\
&&F\arrow[-, "2"']{ull}\arrow[-, "2"']{u}\arrow[-, "2"]{urr}&&\\
\end{tikzcd}
\end{eqnarray*}
The subfield $K := \Q(\beta) = \Q(i(\zeta_{64} + \zeta_{64}^{-1}))$ is the unique CM extension of $F$ with class number $17$. 
For later, we observe that $\beta^2 = -2 - \alpha$, where $\gp = (2 + \alpha)$. Since $\gp$ is the unique prime of $F$ that
ramifies in both $K$ and $D$, we see that $K$ is a splitting field of $D$ by~\cite[Chap. III, Th\'eor\`eme 3.8]{vig80}. It is possible to
compute an explicit embedding $K \hookrightarrow D$ using the Quaternion Algebras Package in \verb|Magma|~\cite{magma} (see~\cite{voi05}),
but we will not need such a map here.

\subsection{The spaces of forms}\label{subsec:newforms}
Let $S_2(\gp)^{\rm new}$ be the new subspace of Hilbert cusp forms of level $\gp$ and weight $2$, this is a $40$-dimensional space. 
Let $S_2^D(1)$ (resp. $S_2^{B}(\gp)^{\rm new}$) be the space of automorphic forms of level $(1)$ and weight $2$ on $D$ (resp. new subspace
of automorphic forms of level $\gp$ and weight $2$ on $B$.) By the Jacquet-Langlands correspondence, we have isomorphisms of Hecke modules
$$S_2(\gp)^{\rm new} \simeq S_2^D(1) \simeq S_2^{B}(\gp)^{\rm new}.$$
The space $S_2(\gp)^{\rm new}$ decomposes into $5$ Hecke constituents of dimensions 
$4, 4, 4, 4$ and $24$ respectively. (We note that all the computations have been performed using the Hilbert Modular Forms Package in 
\verb|Magma|~\cite{magma}, the algorithms are described in~\cite{dd08, dv13, gv11}.) There are choices of newforms $f, f', g, g'$ and $h$
in those constituents such that we have:

\begin{table}
\caption{Newforms of level $\gp$ and weight $2$ on $F = \Q(\zeta_{32})^{+}$}
\begin{tabular}{@{}cccc@{}}\toprule
\text{Newform}&\text{Coefficient field}\,\, $L_f$&\text{Fixed field}\,\, $K_f = L_f^\Delta$ & $\Gal(L_f/K_f)$ \\\midrule
$f$, $f'$ & $\Q(\zeta_{15})^+$& $\Q$ & $\Z/4\Z$\\
$g$, $g'$ & \text{Quartic subfield of}\, $\Q(\zeta_{95})^+$& $\Q$ & $\Z/4\Z$\\
\multirow{2}{*}{$h$}  &\text{Ray class field of modulus} &  $\Q(c) := \Q[x]/(r(x))$,& \multirow{2}{*}{$\Z/8\Z$}\\
& $\gc = (\frac{1}{2}(c^2 - 16c + 25))$ & $ r = x^3 + x^2 - 229x + 167$& \\\midrule
\text{Relations}&${}^\sigma\!f = f'\,\text{and}\, {}^{\sigma^2}\!f = f^\tau$ & ${}^\sigma\!g = g'\, \text{and}\, {}^{\sigma^2}\!g = g^\tau$ & ${}^\sigma\!h = h^\tau$\\
\bottomrule
\end{tabular}
\label{table:eigenforms}
\end{table}

\begin{enumerate}[(i)]
\item The forms $f$ and $f'$ have the same coefficient field $L_f = L_{f'}$, which is the real quartic field $\Q(\zeta_{15})^+$ 
given by $x^4 + x^3 - 4x^2 - 4x + 1$. They satisfy the relations ${}^\sigma\!f = f'$ and ${}^{\sigma^2}\!f = f^\tau$,
where $\tau$ is a generator of $\Gal(L_f/\Q)$. 
\item  The forms $g$ and $g'$ have the same coefficient field $L_g = L_{g'}$, which is the real quartic subfield of $\Q(\zeta_{95})^+$
given by $x^4 + 19x^3 - 59x^2 + 19x + 1$. They satisfy the relations ${}^\sigma\!g = g'$ and ${}^{\sigma^2}\!g = g^\tau$,
where $\tau$ is a generator of $\Gal(L_g/\Q)$. 
\item The coefficient field of the form $h$ is a field $L_h$ of degree $24$, which is cyclic over the field $K_h = \Q(c)$ defined by 
$c^3 + c^2 - 229c + 167 = 0$. More precisely, it is the ray class field of conductor $\gc = (\frac{1}{2}(c^2 - 16c + 25))$. The form $h$
satisfies the relation ${}^\sigma\!h = h^\tau$, where $\tau$ is a generator of $\Gal(L_h/K_h)$. 
\end{enumerate} 
(We summarise that data in Table~\ref{table:eigenforms}, and the relations among the forms.)
Let $w$ and $w_D$ be the Atkin-Lehner involutions acting on $S_2(\gp)^{\rm new}$ and $S_2^D(1)$ respectively.
The Atkin-Lehner involution $w$ acts as follows:
\begin{align*}
w f =  -f,\,\,w f' = -f',\,\,w g  = -g,\,\,w g'  = -g',\,\,w h  = h.
\end{align*}
We recall that $w_D = -w$.

\subsection{The Shimura curve and its quotient}\label{subsec:curves}
Let $X_0^D(1)$ be the Shimura curve attached to $\CO$.  Let $w_D$ be the Atkin-Lehner involution at $\gp$, and $C := X_0^D(1)/\langle w_D \rangle$. 
We can canonically identify $S_2^D(1)$ with the the space of $1$-differential forms on $X_0^D(1)$. From the discussion in \S\ref{subsec:newforms}, 
it follows that $X_0^D(1)$ is a curve of genus $40$; and that $C$ is a curve of genus $16$. 
 
 \begin{thm}\label{thm:signatures} 
 The curves $X_0^D(1)$ and $C$ have the respective signatures $(40; 3^{18}, 16^1)$ and $(16; 2^{17}, 3^9, 32^1)$. 
 \end{thm}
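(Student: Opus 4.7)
The plan is to combine Borel's volume formula (Theorem~\ref{thm:borel-volume}) with the optimal-embedding count of Theorem~\ref{thm:elliptic-elts}, applied first to the Fuchsian group $\Gamma$ uniformising $X_0^D(1)$ and then to the maximal group $\Gamma_\CO$ uniformising $C$. Since the Atkin-Lehner group $W_+$ has order two, we have $\mathrm{Vol}(\Gamma \backslash \mathfrak{H}) = 2\,\mathrm{Vol}(\Gamma_\CO \backslash \mathfrak{H})$, and Borel's formula with $F = \Q(\zeta_{32})^+$ of degree $8$, $S_f = \{\gp\}$, $\N\gp - 1 = 1$, and the index $[H : (F^\times)^2]$ read off from $N_D(\CO)_+$ pins down both covolumes exactly (computed, say, in \texttt{Magma}). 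The genus formula of Subsection~\ref{subsec:genus-formula} then converts covolume plus elliptic data into a signature, so the task reduces to counting elliptic points of each order.

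To enumerate these, I would observe that an element of $\Gamma$ of order $q$ is the image of some $x \in \CO_+^\times$ with $x^q \in F^\times$, so $E := F(x)$ is a CM quadratic extension of $F$ that contains an appropriate root of unity and embeds in $D$; the latter forces $\gp$ to be inert or ramified in $E$. Since $F$ contains $\zeta_8 + \zeta_8^{-1}$, $\zeta_{16} + \zeta_{16}^{-1}$, and $\zeta_{32} + \zeta_{32}^{-1}$, the only cyclotomic candidates are $q \in \{2, 3, 4, 6, 8, 12, 16, 24, 32\}$, and the splitting behaviour of $\gp$ in each $F(\zeta_{2q})$ is decided by a short finite computation. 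For each surviving $E$ I would apply Theorem~\ref{thm:emb-numbers} (combined with Lemmas~\ref{lem:mO1} and~\ref{lem:mO2}) to evaluate $m(\gO, \CO; \Gamma)$ as a product of local embedding numbers (Vign\'eras~\cite{vig80}, Voight~\cite{voi18}) against the class number $h_\gO$ and the appropriate norm index; the expected tally is $e_3 = 18$ from the maximal order $\CO_F[\zeta_3]$, $e_{16} = 1$ from the maximal order in $\Q(\zeta_{32})$, and every other $e_q$ equal to zero. Borel's covolume supplies a sharp numerical cross-check.

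To pass from $X_0^D(1)$ to $C$, the cleanest route is Riemann--Hurwitz for the degree-$2$ quotient by $w_D$: the identity $2\cdot 40 - 2 = 2(2\cdot 16 - 2) + \#\mathrm{Fix}(w_D)$ forces $w_D$ to have $18$ fixed points. The unique order-$16$ elliptic point must be fixed (its $w_D$-orbit has length $1$) and produces the order-$32$ point of $C$; a direct check shows the $18$ order-$3$ points break into $9$ free $w_D$-orbits, contributing $9$ order-$3$ points of $C$; the remaining $17$ fixed points of $w_D$ are smooth on $X_0^D(1)$ and yield the $17$ order-$2$ elliptic points of $C$. Alternatively one can rerun the entire optimal-embedding computation with $\Gamma_\CO$ in place of $\Gamma$, in which case the $q = 2$ case of Theorem~\ref{thm:elliptic-elts} acquires the Atkin-Lehner contribution indexed by $\mathscr{N}_2$ and delivers the $17$ new involutions directly.

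The main obstacle is the certification step inside the embedding count: verifying that the global numbers $m(\gO, \CO; \Gamma)$ really do vanish for the orders $q \in \{2, 4, 8\}$ in $\Gamma$, and for the analogously non-occurring orders in $\Gamma_\CO$. This demands, for each candidate CM field, a careful evaluation of local embedding numbers and of the index $[H : H \cap \N_{E/F}(E^\times)\CO_F^{\times+}]$; the calculation is mechanical but genuinely delicate and is most naturally performed with the quaternion-algebras package of \texttt{Magma}~\cite{voi05}. Borel's volume identity then serves as the final consistency check, since any miscount of elliptic points would manifest as a mismatched covolume.
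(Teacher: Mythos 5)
Your proposal is correct in substance and draws on the same toolkit as the paper --- Borel's volume formula (Theorem~\ref{thm:borel-volume}), the Chinburg--Friedman criterion (Theorem~\ref{thm:chinburg-friedman}), and the embedding-number counts of Theorems~\ref{thm:emb-numbers} and~\ref{thm:elliptic-elts} --- but it deploys them in a different order. The paper treats the two curves asymmetrically: the signature $(40;3^{18},16^1)$ of $X_0^D(1)$ is simply read off from Voight's Shimura-curve package (a fundamental-domain computation), while the signature of $C$ is obtained by running the embedding machinery directly on the maximal group $\Gamma_\CO$: the counts $e_2=17$, $e_3=9$, $e_{32}=1$ come from the class numbers $17$, $9$ and $1$ of $\CO_K$, $\CO_{F(\zeta_3)}$ and $\Z[\zeta_{32}]$ together with the norm indices $[H:H\cap \N_{E/F}(E^\times)\CO_F^{\times+}]$, and the genus then drops out of the volume identity $\tfrac{1455}{32}=2(g-1)+\tfrac{17}{2}+6+\tfrac{31}{32}$. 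You instead compute the elliptic data of $\Gamma^1$ by embedding numbers and pass to $C$ by Riemann--Hurwitz. That shortcut is legitimate, but note two inputs it quietly requires: first, $g(C)=16$ must be known in advance (it is, from the Atkin--Lehner eigenvalue decomposition of $S_2^D(1)$ in \S\ref{subsec:newforms}); second, Riemann--Hurwitz only tells you that $w_D$ has $18$ fixed points, and to distribute them as $1+17$ --- in particular to rule out that some of the $18$ order-$3$ points of $X_0^D(1)$ are fixed, which would produce order-$6$ points on $C$ --- you still need the $q=2$ (and $q=6$) embedding counts for $\Gamma_\CO$, i.e.\ essentially the computation the paper performs. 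So the Riemann--Hurwitz step is best regarded as the consistency check rather than a way of avoiding the $\Gamma_\CO$ count; your own ``alternative route'' in the third paragraph is in fact the paper's main argument. With that caveat, your tallies ($e_3=18$ and $e_{16}=1$ for $\Gamma^1$; $17$, $9$, $1$ of orders $2$, $3$, $32$ for $\Gamma_\CO$) agree exactly with the paper's.
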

 
\begin{proof} The complex points of the curve $X_0^D(1)$ are determined by the quotient $\Gamma^1\backslash \mathfrak{H}$,
where $\Gamma^1$ is the image of $\CO^1$ inside $\PSL_2(\R)$. So it is a Shimura curve. So, we can compute the signature
of $X_0^D(1)$ using the Shimura Curves Package in \verb|magma|~\cite{magma}, which was implemented by Voight~\cite{voi09}. 
This gives that $X_0^D(1)$ has signature $(40; 3^{18}, 16^1)$. 

The curve $C = X_0^D(1)/\langle w_D \rangle$ is given by the maximal arithmetic Fuchsian group $\Gamma_\CO$. It is not a Shimura
curve. Although Voight has implemented algorithms for computing with maximal arithmetic Fuchsian groups, they are not publicly available 
yet. So, we compute the signature of $C$ by using the results of Section~\ref{sec:fuchsian-groups}. 

Let $q>2$ be an integer. Then, by Theorem~\ref{thm:chinburg-friedman}, $\Gamma_\CO$ contains an elliptic element of order $q$ if and only
if the following three conditions are satisfied:
\begin{enumerate}[(i)]
\item $2\cos(2\pi/q) \in F$;
\item no prime $\gq \in S_f$ splits in $E = F(\zeta_q)$;
\item the ideal generated by $2 + 2\cos(2\pi/q)$ is supported at $S_f$ modulo squares.
\end{enumerate}
It is enough to test all integers $q$ between $3$ and $64$.  The only $q \ge 3$ which satisfy these three conditions are: $3, 4, 6, 8, 16$ and $32$.

For $q = 4, 8, 16$ or $32$, we have $E = F(\zeta_q) = \Q(\zeta_{32})$. In that case, the only $\CO_F$-order which contains $\CO_F[\zeta_{32}]$
and optimally embeds into $D$ is the maximal order $\CO_E$. By Theorem~\ref{thm:elliptic-elts}, we get that $e_{32} = 1$. 

For $q = 3$, we have $E = F(\frac{1 + \sqrt{-3}}{2})$.  In that case, the only $\CO_F$-order which contains $\CO_F[\frac{1+\sqrt{-3}}{2}]$
and optimally embeds into $D$ is also the maximal order $\gO := \CO_E$. We have $h_\gO = 9$. Now since the prime $\gp$ is inert
in the relative extension $E/F$, we have $[H : H \cap \N_{E/F}(E^\times)\CO_F^{\times +}] = 2$. So, by Theorem~\ref{thm:elliptic-elts}, 
we get that $e_{3} = 9$. 

Finally, for $q = 2$, we have $W^1 = W_{+} = \Z/2\Z$ since $F$ has narrow class number one and there is a unique prime in $S_f$; 
namely, the prime $\gp$ above $2$. So the unique CM extension $E/F$ which satisfies the condition of Theorem~\ref{thm:chinburg-friedman} 
is the extension $K$ discussed in Subsection~\ref{subsec:CM-extension}. Recall that the ideal $\gp$ is generated by the totally positive
element $n = 2 + \alpha$. The only $\CO_F$-order which contains $\CO_F[\sqrt{-n}]$ and optimally embeds into $D$ is also the maximal 
order $\gO := \CO_K$. We have $h_\gO = 17$. Now since the prime $\gp$ is ramified in the relative extension $K/F$, we have 
$[H : H \cap \N_{E/F}(E^\times)\CO_F^{\times +}] = 1$. So, by Theorem~\ref{thm:elliptic-elts}, we get that $e_{2} = 17$. 

So we conclude that there are 3 classes of elliptic elements in $\Gamma_\CO$ of orders $2, 3$ and $32$, with respective multiplicities 
$17$, $9$ and $1$. 

By the volume formula in Theorem~\ref{thm:borel-volume} and the genus formula in Subsection~\ref{subsec:genus-formula}, the genus $g$ of the
curve $C$ must satisfy the equality
$$\frac{\mathrm{Vol}(\Gamma_\CO\backslash \mathfrak{H})}{2\pi} = \frac{1455}{32} = 2(g - 1) + 17\left(1 - \frac{1}{2}\right) + 9\left(1 - \frac{1}{3}\right) + \left(1 - \frac{1}{32}\right).$$
Solving this, we get that $g = 16$. Hence the curve $C$ has signature $(16; 2^{17}, 3^9, 32^1)$. 
\end{proof}

\begin{lem}\label{lem:field-of-defn} 
The curve $X_0^D(1)$ and the Atkin-Lehner involution $w_D$ are both defined over $\Q$. In particular, the curve $C$ descends to $\Q$.
\end{lem}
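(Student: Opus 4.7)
The plan is to establish Galois descent from $F$ to $\Q$. By Theorem~\ref{thm:canonical-models}, the pair $(X_0^D(1), w_D)$ is canonically defined over the reflex field $F$, so it suffices to produce, for each $\sigma \in \Gal(F/\Q)$, an $F$-isomorphism $\phi_\sigma \colon X_0^D(1) \to {}^\sigma\! X_0^D(1)$ intertwining $w_D$ with ${}^\sigma w_D$, and to verify the cocycle condition $\phi_{\sigma\tau} = \phi_\sigma \circ {}^\sigma\!\phi_\tau$.

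The key observation is that the defining arithmetic data is almost $\Gal(F/\Q)$-invariant. The prime $\gp$ is the unique prime of $F$ above $2$, hence Galois-fixed. The archimedean ramification set $\{v_2, \ldots, v_8\}$ is not literally Galois-stable, but it is intrinsically characterized as the complement of the distinguished split place $v_1$ within the single Galois orbit $\{v_1, \ldots, v_8\}$ of real places of $F$. A convenient way to produce the descent datum is via the \v{C}erednik-Drinfel'd $\gp$-adic uniformization of Theorem~\ref{thm:cerednik-drinfeld}, which expresses $X_0^D(1)$ in terms of the auxiliary totally definite quaternion algebra $B$ whose ramification set consists of all real places of $F$ together with $S_f \setminus \{\gp\}$. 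Since this latter set is fully $\Gal(F/\Q)$-invariant, $B$ admits a $\Gal(F/\Q)$-equivariant structure, and this equivariance propagates first to the $\gp$-adic model of $X_0^D(1)$ and then, by comparison with the canonical model, to the global curve.

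As for $w_D$: since $S_f = \{\gp\}$ is a singleton, the positive Atkin-Lehner group satisfies $W_{+} \simeq \Z/2\Z$ (Subsection~\ref{subsec:atkin-lehner-gp}), with $w_D$ its unique non-trivial element. Thus $w_D$ is intrinsically characterized, so any $F$-isomorphism $\phi_\sigma$ as above automatically carries $w_D$ to ${}^\sigma w_D$. In particular, $w_D$ descends to $\Q$, and the quotient $C = X_0^D(1)/\langle w_D \rangle$ descends along with it.

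The main obstacle is verifying the cocycle condition for the family $\{\phi_\sigma\}_{\sigma \in \Gal(F/\Q)}$, which amounts to the vanishing of a class in $H^1(\Gal(F/\Q), \Aut(X_0^D(1)_F))$. This is tractable because $\Aut(X_0^D(1)_F)$ will be shown to be small (essentially generated by $w_D$) via the dual-graph admissibility analysis of Section~\ref{subsec:automorphism-group}, so the cohomological obstruction is readily controlled.
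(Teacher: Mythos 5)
There is a genuine gap at exactly the point you defer: killing the descent obstruction. You correctly set up the Weil cocycle formalism, but your justification for why the obstruction vanishes --- that $\Aut(X_0^D(1)_F)$ is small, ``essentially generated by $w_D$'' --- is not only insufficient, it is backwards. A curve whose automorphism group is $\Z/2\Z$ generated by a canonical involution is precisely the classical situation (hyperelliptic curves with no extra automorphisms) in which the field of moduli can fail to be a field of definition. Quantitatively, the obstruction cocycle $c(\sigma,\tau) = \phi_{\sigma\tau}^{-1}\circ\phi_\sigma\circ{}^\sigma\!\phi_\tau$ lives in $H^2(\Gal(F/\Q),\Aut)$ (not $H^1$, which classifies twists once descent is known), and with $\Gal(F/\Q)\simeq\Z/8\Z$ acting trivially on $\Z/2\Z$ this group is $\Z/2\Z\neq 0$; so ``smallness'' of the automorphism group gives no vanishing for free. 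The paper kills the obstruction by a completely different device that your proposal never mentions: the splitting field $\Q(\zeta_{32})$ of $D$ has class number one, so there is a \emph{unique} CM point on $X_0^D(1)$ attached to $\Q(\zeta_{32})/F$; being unique it is canonical, hence $F$-rational and compatible with the $\Gal(F/\Q)$-action, and the descent result for \emph{marked} curves \cite[Corollary 1.9]{sv16} then applies. (The fact that the field of moduli is $\Q$ in the first place is extracted from \cite{doi-naga67} together with the triviality of the ray class group of modulus $\gp v_2\cdots v_8$ --- another step your sketch glosses over when asserting the ramification data is ``intrinsically'' Galois-invariant.)

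Two further remarks. First, your appeal to the \v{C}erednik--Drinfel'd uniformisation produces at best a Galois-equivariant structure on a formal model over $\CO_{F_\gp}$; propagating this to a global descent datum over $\Q$ is not automatic and is not how the paper proceeds. Second, your reliance on the dual-graph admissibility analysis to bound $\Aut(X_0^D(1)_F)$ is shaky: the paper's analysis is carried out for the quotient $C$, and for $X_0^D(1)$ itself the paper only pins down $\Aut(X_0^D(1))=(\Z/2\Z)^s$ with $1\le s\le 2$. Your treatment of $w_D$ (unique nontrivial element of $W_{+}$, hence intrinsically characterised and automatically intertwined by any descent isomorphism) is fine and consistent with the paper.
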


\begin{proof}
Since $\sigma(\gp) = \gp$ and the ray class group of modulus $\gp v_2\cdots v_8$ is trivial, the curve $X_0^D(1)$ is defined over $F$ by~\cite[Corollary]{doi-naga67},
and the field of moduli is $\Q$. The field $\Q(\zeta_{32})$ is a splitting field for $D$ whose class number is one. So, there is a unique CM point attached to 
the extension $\Q(\zeta_{32})/F$, and it is defined over $F$. Therefore, by~\cite[Corollary 1.9]{sv16}, the curve $X_0^D(1)$ descends to $\Q$. 

\medskip
Alternatively, by using the moduli interpretation in~\cite{car86b}, or the more recent work~\cite{tixi16}, 
one can show that both $X_0^D(1)$ and $w_D$ are defined over $\Q$. 
\end{proof}


\subsection{The dual graph of the quotient curve} The dual graph $\mathcal{G}'$ of the curve $\mathscr{M}/\langle w_\gp \rangle$
is displayed in Figure~\ref{figure:dual-graph}. It was computed by using Proposition~\ref{prop:bi-partite-graph}
and Proposition~\ref{prop:quotient-graph}. The computations combine both \verb|Magma|~\cite{magma} and \verb|Sage|~\cite{sagemath}.

\begin{lem}\label{lem:dual-graph} 
The automorphism group of $\mathcal{G}'$ is $\Aut(\mathcal{G}') \simeq \Z/4\Z \ltimes (\Z/2\Z)^4$.
\end{lem}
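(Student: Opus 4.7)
The plan is to read off $\Aut(\mathcal{G}')$ directly from the explicit picture of $\mathcal{G}'$ in Figure~\ref{figure:dual-graph}. The graph $\mathcal{G}'$ comes with a natural combinatorial labelling: by Proposition~\ref{prop:bi-partite-graph} its vertices are in bijection with $B^\times\backslash\widehat{B}^\times/K$ (one copy, after folding the two sides of the bipartition by $w_\gp$ as in Proposition~\ref{prop:quotient-graph}(1)), and each vertex $v$ and edge $e$ carries a positive integer weight $\#\Stab_{B^\times/F^\times}(v)$, respectively $\#\Stab_{B^\times/F^\times}(e)$. Any automorphism of the weighted graph must preserve these weights and must preserve the loops which were removed (so any edge that is a $w_\gp$-orbit of two distinct edges is labelled differently from an edge coming from a fixed edge). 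So the first step is to tabulate the list of vertices and edges of $\mathcal{G}'$ together with these weights.

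Next, I would exhibit an explicit subgroup $H \subseteq \Aut(\mathcal{G}')$ of order $64$ isomorphic to $\Z/4\Z \ltimes (\Z/2\Z)^4$. Looking at Figure~\ref{figure:dual-graph}, the graph has a central hub with four ``arms'' that can be freely swapped in pairs (giving a factor $(\Z/2\Z)^4$ from flipping each arm internally), and an outer cyclic permutation of these four arms (giving the $\Z/4\Z$ factor, acting non-trivially on $(\Z/2\Z)^4$ by cyclic permutation of coordinates). I would write down these generators explicitly as permutations of the vertex set and check that they preserve the weighted adjacency structure; verifying the relations of the semi-direct product $\Z/4\Z \ltimes (\Z/2\Z)^4$ is then a direct check on generators.

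For the matching upper bound $\#\Aut(\mathcal{G}') \le 64$, I would use the weight function and local degrees as invariants to constrain the orbit structure. Concretely, fix a vertex $v_0$ of maximal degree (or minimal weight); the orbit $\Aut(\mathcal{G}')\cdot v_0$ has size bounded by the number of vertices with the same degree and weight as $v_0$, and the stabiliser of $v_0$ is bounded by the number of weight- and adjacency-preserving permutations of $\Star(v_0)$. Repeating this orbit-stabiliser argument along a flag $v_0,v_1,\ldots$ that rigidifies the graph, one obtains $\#\Aut(\mathcal{G}') \le 64$, matching the subgroup exhibited above.

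The main obstacle is purely bookkeeping: one must be confident that the figure records all the weights correctly and that the local symmetries at each vertex are exhausted. In practice this is verified by feeding the labelled adjacency data into a standard graph-automorphism routine (\verb|Magma| or \verb|Sage|), which is the route used to produce Figure~\ref{figure:dual-graph} in the first place; the description above is the human-readable explanation of the output.
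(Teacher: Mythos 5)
Your proposal is, at its core, the same as the paper's: the lemma is a statement about an explicit finite weighted graph, and both you and the author ultimately delegate the verification to a graph-automorphism routine in \verb|Magma|/\verb|Sage| applied to the adjacency and weight data coming from the Brandt module $M_B = \Z[B^\times\backslash\widehat{B}^\times/\widehat{\CO}_B^\times]$ (class number $58$, so $58$ vertices and $73$ edges after removing loops). One small procedural difference: the paper first computes $\Aut^s(\mathcal{G}')$ of the underlying \emph{simple} graph and then checks a posteriori that every element happens to preserve the edge weights, whereas you build the weights into the invariants from the start; both are fine.

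The part of your write-up that does not hold up is the ``human-readable'' middle section. The claimed picture of a central hub with four arms, each flippable by a $\Z/2\Z$ and cyclically rotated by a $\Z/4\Z$ acting by permutation of coordinates, is invented: you have no access to the actual adjacency data, and nothing in Proposition~\ref{prop:bi-partite-graph} or the weight multiplicities ($32, 24, 16, 8^2, 4^3, 3^4, 2^6, 1^{40}$ for vertices) forces such a structure. In particular the action of $\Z/4\Z$ on $(\Z/2\Z)^4$ in the semidirect product need not be coordinate permutation. Since your exhibition of the order-$64$ subgroup (the lower bound) rests entirely on this unverified picture, and your orbit--stabiliser upper bound is only a template rather than a carried-out argument, neither bound is actually established except by the appeal to the computer in your final paragraph. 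The paper handles the identification of the abstract group structure differently and more robustly: having computed $\Aut(\mathcal{G}')$ as a permutation group, it locates the \emph{unique} normal subgroup isomorphic to $(\Z/2\Z)^4$ and then a cyclic subgroup of order $4$ meeting it trivially, which pins down the semidirect product decomposition without ever needing a geometric description of the graph. If you want your proof to stand on its own, either drop the speculative structural description and state plainly that the group and its decomposition are read off from the computed permutation group (as the paper does), or actually produce the generators from the adjacency data.
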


\begin{proof} We compute the dual graph $\mathcal{G}'$ of the quotient $\mathscr{M}/\langle w_\gp \rangle$ using Proposition~\ref{prop:bi-partite-graph}
and Proposition~\ref{prop:quotient-graph}. Let $B$ be the definite quaternion algebra defined in \S \ref{subsec:quat-algs}. Then, by 
Proposition~\ref{prop:bi-partite-graph} and Proposition~\ref{prop:quotient-graph},  the dual graphs $\mathcal{G}$
and $\mathcal{G}'$ are determined by the Brandt module $M_B := \Z[B^\times \backslash \widehat{B}^\times/\widehat{\CO}_B^\times]$. 
In this case, the class number of the maximal order $\CO_B$ is $58$, and a basis of this module is given by equivalence classes of $\CO_B$-right ideals. 
We let $v_1, v_2, \ldots, v_{58}$ be such a basis, which we order so that the weights of the elements are in decreasing order. We get the following 
sequence of weights: $32, 24, 16, 8^2, 4^3, 3^4, 2^6$ and $1^{40}$, where the exponent indicates the number of times each weight is repeated. Similarly, 
we compute the set of edges, and we obtain the following sequence for their weights: $32, 16^2, 8^6, 4^6, 2^{12}$ and $1^{128}$. By combining 
this with the Hecke operator $T_\gp$, we obtain the graph $\mathcal{G}'$ in Figure~\ref{figure:dual-graph}. 

We compute the automorphism group $\Aut^s(\mathcal{G}')$ of the underlying simple graph using \verb|Magma|, and check that every element in 
$\Aut^s(\mathcal{G}')$ preserves the weights of the edges, i.e. that $\Aut(\mathcal{G}') = \Aut^s(\mathcal{G}')$. 

To determine the group structure of $\Aut(\mathcal{G}')$, we first check that there is a unique normal subgroup of $\Aut(\mathcal{G}')$ which is isomorphic to 
$(\Z/2\Z)^4$. Finally, we show that there is a unique cyclic subgroup of order $4$ whose intersection with $(\Z/2\Z)^4$ is the neutral element.
\end{proof}

\begin{rem} \rm As a byproduct of the computation of $\mathcal{G}'$, we check that 
$$1 + \#\mathcal{E}(\mathcal{G}') - \#\mathcal{V}(\mathcal{G}') = 1 + 73 - 58 = 16,$$ 
which is the genus of $\mathscr{M}/\langle w_\gp \rangle$, or equivalently $C$.
\end{rem}

Let $(\mathscr{M}/\langle w_\gp \rangle)_{st}$ be the stable model of $\mathscr{M}/\langle w_\gp \rangle$, and 
$\mathcal{G}_{st}$ its dual graph. By definition, $(\mathscr{M}/\langle w_\gp \rangle)_{st}$ is stable if for all $v \in \mathcal{V}(\mathcal{G}_{st})$,
we have $\#\Star(v) \ge 3$. So, we obtain $(\mathscr{M}/\langle w_\gp \rangle)_{st}$ by blowing down all components $C_v$ 
associated to a vertex $v$ such that $\#\Star(v) < 3$. On graphs, this corresponds to doing the following:
\begin{enumerate}[a)]
\item For all $v \in \mathcal{V}(\mathcal{G}')$, with $\#\Star(v) = 1$, remove $v$ and all edges in $\Star(v)$;
\item For each $v \in \mathcal{V}(\mathcal{G}')$, with $\#\Star(v) = 2$, contract the chain $v' \stackrel{e'}{-} v \stackrel{e''}{-} v''$ to
$v'\stackrel{e}{-} v''$ with $w(e) = w(e')+w(e'')$.
\end{enumerate}
By applying this process to the curve $\mathscr{M}/\langle w_\gp \rangle$, and then relabelling the resulting graph, we obtain the stable 
model whose dual graph $\mathcal{G}_{st}$ is given by Figure~\ref{figure:st-dual-graph}. The graph $\mathcal{G}_{st}$ has $30$ vertices
and $45$ edges so that
$$1 + \#\mathcal{E}(\mathcal{G}_{st}) - \#\mathcal{V}(\mathcal{G}_{st}) = 1 + 45 - 30 = 16.$$ 

\begin{lem}\label{lem:st-dual-graph} 
Let $(\mathscr{M}/\langle w_\gp \rangle)_{st}$ be the stable model of $\mathscr{M}/\langle w_\gp \rangle$, and $\mathcal{G}_{st}$ its dual graph.
Then, $\mathcal{G}_{st}$ is a connected graph such that  $\Aut(\mathcal{G}_{st}) = \Aut(\mathcal{G}')$.
\end{lem}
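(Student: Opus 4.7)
\medskip

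\noindent\textbf{Proof plan.} I would split the lemma into its two claims and dispatch them separately.

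\medskip

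\noindent\emph{Connectedness of $\mathcal{G}_{st}$.} I would deduce this from geometric irreducibility. By Lemma~\ref{lem:field-of-defn} and the moduli description recalled in Theorem~\ref{thm:canonical-models}, together with the fact that $F$ has narrow class number one, the curve $C = X_0^D(1)/\langle w_D \rangle$ is geometrically irreducible over $F$. Consequently, the special fibre of $\mathscr{M}/\langle w_\gp \rangle$ (and its base change to $\CO_{F_{\gp^2}}$) is connected, so its dual graph $\mathcal{G}'$ is connected by Theorem~\ref{thm:special-fibre}(iii) and Proposition~\ref{prop:quotient-graph}(1). Connectedness is then preserved by the two stabilisation operations: removing a leaf of a connected graph leaves it connected, and contracting a length-$2$ chain through a degree-$2$ vertex obviously does the same. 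Hence $\mathcal{G}_{st}$ is connected.

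\medskip

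\noindent\emph{Equality of automorphism groups.} The stabilisation $\mathcal{G}' \rightsquigarrow \mathcal{G}_{st}$ is intrinsic: at each step one removes vertices $v$ with $\#\Star(v)=1$ or contracts chains through $v$ with $\#\Star(v)=2$, and both operations commute with any weight-preserving graph automorphism. This gives a canonical homomorphism $\Phi \colon \Aut(\mathcal{G}') \to \Aut(\mathcal{G}_{st})$, and I would prove the lemma by showing $\Phi$ is an isomorphism.

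For injectivity, an element $\omega \in \ker(\Phi)$ must fix every stable vertex and stable edge of $\mathcal{G}_{st}$, so the only possible non-trivial action of $\omega$ on $\mathcal{G}'$ is to permute the finitely many ``unstable trees'' hanging off each stable vertex $v$ and the interior vertices of each contracted chain. I would inspect the weighted data of $\mathcal{G}'$ produced in the proof of Lemma~\ref{lem:dual-graph} (vertex weight multiset $32,24,16,8^2,4^3,3^4,2^6,1^{40}$ and edge weight multiset $32,16^2,8^6,4^6,2^{12},1^{128}$) and check, tree by tree and chain by chain, that the weights distinguish every leaf and every interior vertex from its neighbours. Since the stable part is already fixed and each unstable vertex and edge is combinatorially rigid relative to it, $\omega$ is forced to be the identity.

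For surjectivity, I would compute $\Aut(\mathcal{G}_{st})$ directly in \verb|Magma| from the explicit graph in Figure~\ref{figure:st-dual-graph} using its automorphism group routine on weighted graphs. By Lemma~\ref{lem:dual-graph} we already know $|\Aut(\mathcal{G}')| = 64$; combined with injectivity of $\Phi$, matching orders $|\Aut(\mathcal{G}_{st})| = 64$ forces $\Phi$ to be an isomorphism (and in fact $\Aut(\mathcal{G}_{st}) \simeq \Z/4\Z \ltimes (\Z/2\Z)^4$).

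\medskip

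\noindent\emph{Main obstacle.} The hard part is the injectivity step: a priori, the extra symmetries gained by collapsing leaves and chains could produce automorphisms of $\mathcal{G}'$ that become trivial on $\mathcal{G}_{st}$, which would ruin the equality $\Aut(\mathcal{G}_{st}) = \Aut(\mathcal{G}')$. The argument therefore depends on the specific weight distribution of $\mathcal{G}'$: the leaves and short chains attached to each stable vertex have to be combinatorially distinguishable. This is the one point where a genuine finite case analysis is unavoidable, although the sparse and highly asymmetric weight pattern observed in Lemma~\ref{lem:dual-graph} suggests the verification is routine.
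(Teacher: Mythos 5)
Your proposal is correct and amounts to the same thing as the paper's proof, which is simply the one-line ``this follows from a direct calculation'': both claims ultimately rest on a finite, explicitly computable verification on the graphs of Figures~\ref{figure:dual-graph} and~\ref{figure:st-dual-graph}. Your extra scaffolding --- deducing connectedness of $\mathcal{G}'$ from geometric irreducibility of $C$ rather than reading it off the figure, and organising the equality of automorphism groups via the canonical homomorphism $\Phi\colon \Aut(\mathcal{G}')\to\Aut(\mathcal{G}_{st})$ induced by the intrinsic stabilisation procedure --- is sound and correctly isolates the only genuine content, namely the finite check that no stable vertex carries two isomorphic weighted hanging trees and that the orders of the two groups agree.
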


\begin{proof}
This follows from a direct calculation.
\end{proof}

\begin{lem}\label{lem:admissible}
Every admissible subgroup of $\Aut(\mathcal{G}_{st})$ of exponent $2$ has order $2$.
\end{lem}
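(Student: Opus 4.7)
The plan is to perform an exhaustive case analysis on the finite group $\Aut(\mathcal{G}_{st}) \simeq \Z/4\Z \ltimes (\Z/2\Z)^4$ from Lemma~\ref{lem:st-dual-graph}, using the explicit combinatorial data of $\mathcal{G}_{st}$ in Figure~\ref{figure:st-dual-graph}. Since a subgroup of exponent $2$ is automatically elementary abelian, it suffices to exhibit a sufficient obstruction to two distinct commuting admissible involutions $\omega_1,\omega_2$ having an admissible product $\omega_1\omega_2$; equivalently, to rule out any elementary abelian $2$-subgroup of order at least $4$ in which every non-trivial element is admissible.

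First I would enumerate all involutions in $\Aut(\mathcal{G}_{st})$. These split into two families: the $15$ non-trivial elements of the normal subgroup $(\Z/2\Z)^4$, and those of the form $sv$ where $s$ is the unique order-$2$ element of the complementary cyclic factor $\Z/4\Z$ and $v \in (\Z/2\Z)^4$ satisfies $(sv)^2 = 1$; the latter condition is a cocycle equation in $(\Z/2\Z)^4$ that cuts out an explicit affine subset, easily computed once the action of $\Z/4\Z$ on $(\Z/2\Z)^4$ is read off from the structure of $\Aut(\mathcal{G}_{st})$.

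Next, for each candidate involution $\omega$ I would compute its fixed vertex set $\mathrm{Fix}_{\mathcal{V}}(\omega) \subset \mathcal{V}(\mathcal{G}_{st})$ and its fixed edge set $\mathrm{Fix}_{\mathcal{E}}(\omega) \subset \mathcal{E}(\mathcal{G}_{st})$ directly from the adjacency data of the $30$-vertex, $45$-edge graph. Admissibility then reduces to the elementary test that no fixed vertex $v$ satisfies $\#(\Star(v) \cap \mathrm{Fix}_{\mathcal{E}}(\omega)) \ge 3$. Among the surviving admissible involutions I would then form every commuting pair and check whether the product is again admissible, which is again a finite combinatorial verification.

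The expected outcome, consistent with the fact that the hyperelliptic involution of $C$ induces an admissible automorphism of $\mathcal{G}_{st}$ by Proposition~\ref{prop:quotient-graph}(2) and should be essentially the only one, is that there is exactly one non-trivial admissible involution in $\Aut(\mathcal{G}_{st})$. From this the lemma is immediate, since any admissible elementary abelian $2$-subgroup can contain only this involution together with the identity. The main obstacle is not conceptual but bookkeeping: one must correctly transport the abstract group structure through the semi-direct product to the permutation action on vertices and edges. This can be carried out using the same \verb|Magma| computation already employed in the proof of Lemma~\ref{lem:dual-graph}.
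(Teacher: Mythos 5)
Your strategy is essentially the paper's: enumerate the $19$ involutions of $\Aut(\mathcal{G}_{st})$ inside $S_{30}$, test admissibility of each through its fixed vertices and fixed edges (using that every vertex of the stable graph has $\#\Star(v)=3$, so a fixed vertex whose star is entirely fixed already violates admissibility), and then examine products of pairs of admissible involutions. The one point to correct is your expected outcome: the computation does \emph{not} produce a unique non-trivial admissible involution. The paper finds exactly \emph{four} admissible involutions, namely the four whose support has length $28$ (each fixes only the two vertices labelled $1$ and $2$, while a non-admissible element must fix at least four vertices); each of the remaining fifteen involutions fixes some vertex together with its entire star and so fails admissibility. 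Consequently the last step of your plan --- checking that the product $\sigma_i\sigma_j$ of two distinct admissible involutions is never admissible --- is not an optional safeguard but the decisive point: it is precisely what excludes an admissible elementary abelian subgroup of order $4$ or more. With that verification carried out (as in the paper), your argument goes through; without it, the inference ``from this the lemma is immediate'' does not hold, because the heuristic that the hyperelliptic involution should be essentially the only admissible automorphism is not borne out at the level of the graph.
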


\begin{proof} 
First, we note that, since the degree of the Hecke operator $T_\gp$ is $3$, and $(\mathscr{M}/\langle w_\gp \rangle)_{st}$ is stable,
$\#\Star(v) = 3$ for each $v \in \mathcal{V}(\mathcal{G}_{st})$.

In the notations of Figure~\ref{figure:st-dual-graph}, we label the vertices $1,2,\ldots, 30$. There are $19$ permutations of order 
$2$ in $\Aut(\mathcal{G}_{st}) \subset S_{30}$. Of those $19$ 
permutations, there are exactly $4$ with the same support of length $28$. Each of the remaining $17$ has a support whose
length belongs to $\{2, 4, 6, 8\}$. The permutations of length $28$ fix the vertices $v = 1$ and $v'=2$. So, they must be admissible
since a non-admissible element must fix at least $4$ different vertices. For each of remaining $17$ permutations, one easily sees that 
the complement of its support contains a vertex $v$ and its $\Star(v)$, meaning that it cannot be admissible.  

To conclude the proof of the lemma, we let $\sigma_i$, $i = 1, 2, 3, 4$ be the $4$ admissible permutations obtained above,
and we check that $\sigma_i \sigma_j$ is not admissible for $i \neq j$. 
\end{proof}

\begin{lem}\label{lem:AutC-embeds}
There is an injection $\Aut(C) \hookrightarrow H$ into an admissible subgroup of $\Aut(\mathcal{G}_{st})$ of exponent $2$.
In particular $\Aut(C)$ has order at most $2$. 
\end{lem}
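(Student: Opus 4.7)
The plan is to apply Proposition~\ref{prop:quotient-graph}(2) to $X_U = X_0^D(1)$ with $W' = \langle w_D \rangle$. Since Theorem~\ref{thm:signatures} gives that $C = X_0^D(1)/\langle w_D \rangle$ has genus $16 \ge 2$, this would yield an injection
\[
\varrho \colon \Aut(C) \hookrightarrow \Aut(\mathcal{G}_{st})
\]
whose image lies in an admissible subgroup of $\Aut(\mathcal{G}_{st})$. By Lemma~\ref{lem:st-dual-graph} combined with Lemma~\ref{lem:dual-graph}, $\Aut(\mathcal{G}_{st}) \simeq \Z/4\Z \ltimes (\Z/2\Z)^4$. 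Setting $H$ equal to the image of $\varrho$, the rest of the argument reduces to showing that $H$ has exponent $2$, after which Lemma~\ref{lem:admissible} will immediately give $|H| \le 2$ and hence $|\Aut(C)| \le 2$.

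To show that $H$ has exponent $2$, my approach is to rule out every order-$4$ element: I would verify that no admissible subgroup of $\Aut(\mathcal{G}_{st})$ can contain an element of order $4$. Equivalently, for every $\tau \in \Aut(\mathcal{G}_{st})$ of order $4$, either $\tau$ itself or its square $\tau^2$ must fail to be admissible. The key geometric input is that $\mathcal{G}_{st}$ is $3$-regular: every $v \in \mathcal{V}(\mathcal{G}_{st})$ satisfies $\#\Star(v) = 3$, as already observed in the proof of Lemma~\ref{lem:admissible}, since $T_\gp$ has degree $3$ and $(\mathscr{M}/\langle w_\gp \rangle)_{st}$ is stable. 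For any fixed vertex $v$ of a purported admissible $\tau$ of order $4$, the induced permutation of the three edges in $\Star(v)$ lies in $S_3$ and has order dividing $4$, hence order $1$ or $2$; consequently $\tau^2$ fixes $\Star(v)$ pointwise. This reduces the admissibility question for $\tau$ to an examination of the fixed vertices of $\tau$ and the behaviour of $\tau^2$ on their stars.

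The concrete step I would then carry out is to enumerate, using the labelling $1, 2, \ldots, 30$ of vertices from Figure~\ref{figure:st-dual-graph}, the order-$4$ elements of $\Aut(\mathcal{G}_{st}) \hookrightarrow S_{30}$, and check each one against the classification of order-$2$ admissible elements already produced in the proof of Lemma~\ref{lem:admissible}: only $4$ out of the $19$ involutions are admissible, so if $\tau^2$ falls outside that list, then $\tau$ is automatically excluded; otherwise one directly inspects the fixed vertices of $\tau$ itself. The main obstacle in the plan is precisely this explicit combinatorial verification on the $30$-vertex graph $\mathcal{G}_{st}$ — it is not conceptual but has to be done with the actual graph data, and is best handled in \verb|Magma| in the same fashion as Lemma~\ref{lem:admissible}. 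Once it is complete, taking $H$ to be the image of $\varrho$ concludes the proof via Lemma~\ref{lem:admissible}.
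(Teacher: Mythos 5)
Your reduction is set up correctly as far as it goes: Proposition~\ref{prop:quotient-graph}(2) does give the injection $\varrho$, and your observation that a vertex $v$ fixed by an order-$4$ element $\tau$ forces $\tau^2$ to fix $v$ together with all three edges of $\Star(v)$ (because the induced permutation of $\Star(v)$ lies in $S_3$ and has order dividing $4$) is a clean use of the $3$-regularity; it correctly shows that any order-$4$ element of an admissible subgroup must be fixed-point-free on vertices. But the remaining step --- that no fixed-point-free order-$4$ element $\tau$ has $\tau^2$ among the four admissible involutions --- is exactly where the proof is incomplete, and it is not a gap you can wave away by deferring to \verb|Magma|. The statement you are trying to verify, namely that \emph{every} admissible subgroup of $\Aut(\mathcal{G}_{st})$ has exponent $2$, is strictly stronger than the lemma and is not implied by its truth: the lemma only concerns the particular subgroup $\image(\varrho)$. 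If the graph happens to admit a fixed-point-free order-$4$ automorphism squaring to one of the four admissible involutions, your route collapses while the lemma remains true. So you have replaced the needed input by an unverified combinatorial claim whose success is not guaranteed a priori.

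The paper closes this gap with arithmetic rather than graph theory. By the decomposition~\eqref{eq:jac-decomp2}, $\Jac(C) \sim A_f \times A_{f'} \times A_g \times A_{g'}$ with the four simple factors pairwise non-isogenous and each having a totally real field as endomorphism algebra (Subsection~\ref{subsec:decomposition}). Hence the finite group $\Aut(C)$, acting on $\Jac(C)$, lands in the roots of unity of a product of four totally real fields, i.e.\ $\Aut(C) \subset (\Z/2\Z)^4$. The image $H = \image(\varrho)$ is therefore an admissible subgroup of exponent $2$ by fiat, and Lemma~\ref{lem:admissible} gives $\#H \le 2$ with no further computation on order-$4$ elements. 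If you want to keep your purely graph-theoretic route, you must actually carry out the enumeration of fixed-point-free order-$4$ elements and their squares; otherwise you should import the exponent-$2$ bound from the Jacobian as the paper does.
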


\begin{proof} In Subsection~\ref{subsec:decomposition}, we will show that the endomorphism ring of each of the simple factor
of $\Jac(C)$ is a totally real field. (This follows from the decomposition~\eqref{eq:jac-decomp2}.) Using this, we see that 
$\Aut(C) \subset (\Z/2\Z)^4$. So, by Proposition~\ref{prop:quotient-graph}, $\Aut(C)$ injects into an admissible subgroup $H$ of 
$\Aut(\mathcal{G}_{st})$ of exponent $2$. By Lemma~\ref{lem:admissible}, $H$ has order at most $2$.
\end{proof}

\begin{center}
\begin{figure}
\caption{The dual graph $\mathcal{G}'$ of the quotient curve $\mathscr{M}/\langle w_\gp\rangle$.}
\includegraphics[width = .72\textwidth, angle = -90]{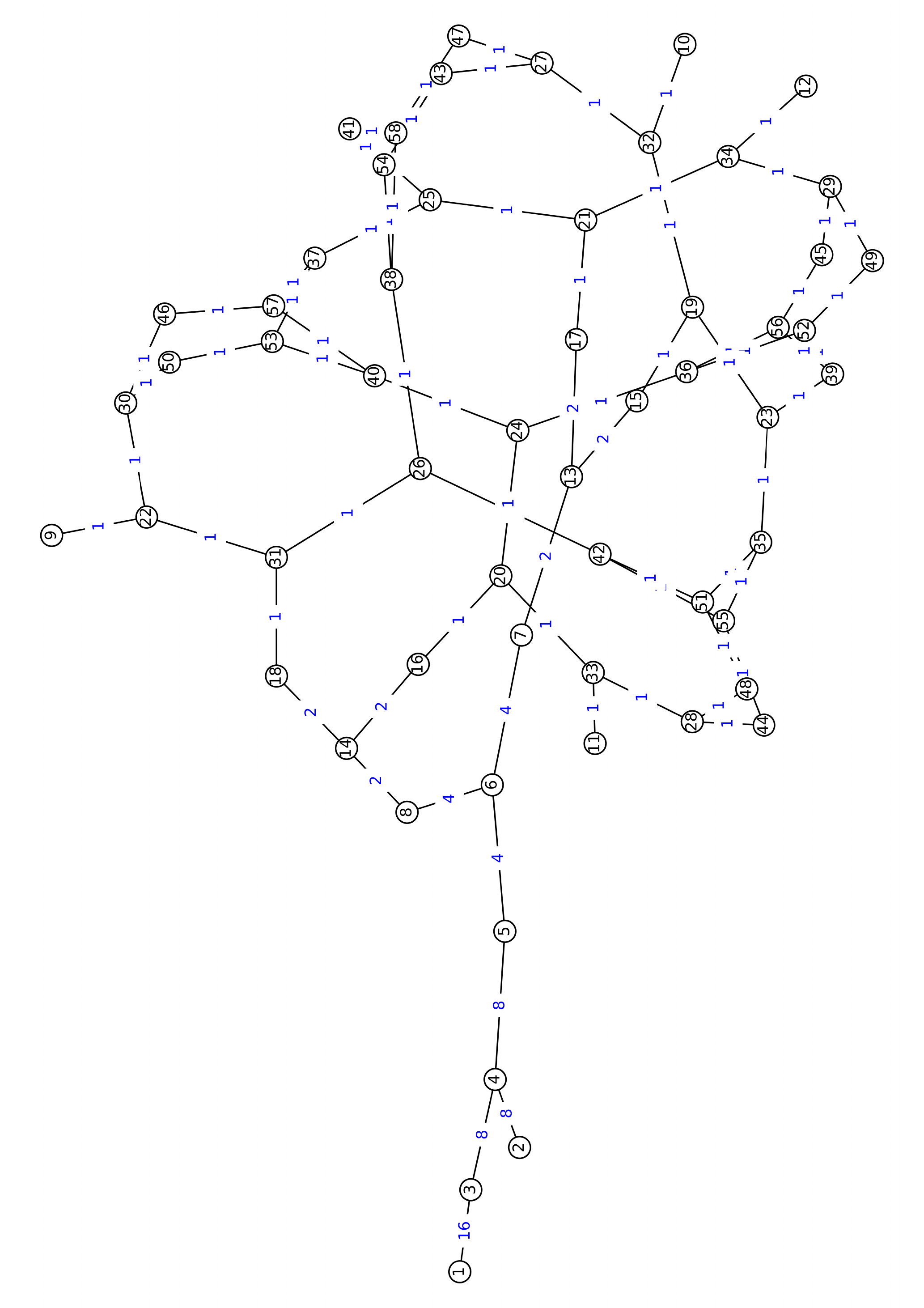}
\label{figure:dual-graph}
\end{figure}
\end{center}

\begin{rem}\rm The graph $\mathcal{G}'$ of the integral model $\mathscr{M}/\langle w_\gp\rangle$ (see Figure~\ref{figure:dual-graph})
is an example of a graph whose automorphism group does {\it not} have an element that is admissible. Indeed, it is easy to see that every
element of $\Aut(\mathcal{G}')$ must fix the vertex $v_4$ and the $3$ edges of weight $8$ contained in $\Star(v_4)$. However, the
vertex $v_4$ and $\Star(v_4)$ are removed when we blow down $\mathscr{M}/\langle w_\gp\rangle$ to obtain the stable model
$(\mathscr{M}/\langle w_\gp\rangle)_{st}$ (see Figure~\ref{figure:st-dual-graph}). This example shows that~\cite[Proposition 3.4]{kr08}
is incorrect as stated and needs to be modified slightly.  
\end{rem}

\subsection{Hyperellipticity of the curve $C$} We are now ready to prove one of our main results.

\begin{thm}\label{thm:C-is-hyperelliptic} The curve $C$ is hyperelliptic over $F$. 
\end{thm}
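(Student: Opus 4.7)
The plan is to apply Proposition~\ref{prop:hyperellipticity}: since $g = g(C) = 16$, hyperellipticity of $C$ over $F$ is equivalent to $\#\mathscr{W} = 2g+2 = 34$, the minimum allowed value. Because $34 \cdot g(g-1)/2 = 34 \cdot 120 = 4080 = g(g^2-1)$ already exhausts the total weight $\sum_{P\in\mathscr{W}} w(P)$, it suffices to exhibit $34$ Weierstrass points on $C$, each of weight $120$: no further Weierstrass point can then exist.

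The first $17$ Weierstrass points I would identify from the elliptic locus of the signature $(16;\, 2^{17},\, 3^9,\, 32^1)$ established in Theorem~\ref{thm:signatures}. The $17$ elliptic points of order $2$ on $C$ arise as images under $X_0^D(1) \to C$ of CM points on $X_0^D(1)$ by the extension $K$ of \S\ref{subsec:CM-extension} (with $h_K = 17$) that are fixed by $w_D$; by the moduli-theoretic description of CM points on Shimura curves, they are defined over the Hilbert class field of $K$. At each such point $P$, the non-trivial stabilizer acts by $-1$ on the local cotangent space, and the resulting $\pm 1$-eigenspace decomposition of $H^0(C, \Omega^1_C)$ combined with Riemann--Roch forces a non-generic gap sequence at $P$; hence $P \in \mathscr{W}$ with the expected weight.

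The remaining $17$ Weierstrass points I would locate by direct computation using the identification $H^0(C, \Omega^1_C) \simeq S_2^D(1)^{w_D=+1}$ from \S\ref{subsec:newforms}. This transports the Wronskian of a basis of differentials to an explicit calculation on $w_D$-invariant weight-$2$ quaternionic forms, which can be carried out via the complex uniformization and the Shimura curve and quaternion algebra packages in \texttt{Magma} used in Theorem~\ref{thm:signatures}. The expected output is a Wronskian divisor of degree $4080$ supported on exactly $34$ distinct points, each of multiplicity $120$, thereby completing the count.

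The main obstacle is the effective Wronskian computation and the verification that the $17$ non-CM points so produced are distinct from one another and from the $17$ CM points, each carrying weight exactly $120$. Once $\#\mathscr{W} = 34$ is established, Proposition~\ref{prop:hyperellipticity} immediately yields that $C$ is hyperelliptic over $F$, with the $34$ Weierstrass points forming the branch locus of the hyperelliptic map $C \to \mathbf{P}^1$.
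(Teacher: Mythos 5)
Your first paragraph and your identification of the $17$ order-$2$ elliptic (CM) points as Weierstrass points reproduce the paper's opening move, but the way you propose to finish is where the argument has a genuine gap. The paper never computes a Wronskian. Having observed that every global differential vanishing at such a point $P$ vanishes to even order, it pushes this through Riemann--Roch to obtain $\ell(K_C-2P)=\ell(K_C-P)$ and hence $\ell(2P)=2$, so that each such $P$ is a \emph{hyperelliptic} Weierstrass point; it then pins down the full count by a Galois-theoretic argument (the CM Weierstrass points are defined over the Hilbert class field $H_K$ by Shimura reciprocity, and the Galois action on $\mathscr{W}$ factors through $\Gal(M/F)\simeq D_{17}$ of order $34$, matching the lower bound $2g+2=34$), before invoking Proposition~\ref{prop:hyperellipticity}. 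Your replacement for this second step --- a numerical Wronskian computation on $S_2^D(1)^{w_D=+1}$ whose ``expected output'' is a degree-$4080$ divisor supported on $34$ points of multiplicity $120$ --- is not carried out, and you say so yourself. As written, your proposal rigorously produces only $17$ Weierstrass points of total weight at most $17\cdot 120=2040$, which is half of $g(g^2-1)=4080$ and entirely consistent with $C$ being non-hyperelliptic; everything therefore rests on an undone computation that is also genuinely delicate to certify (exact multiplicities of zeros of a $16\times 16$ Wronskian of numerically evaluated quaternionic forms on a curve with no known model). Note also that your counting step needs each of the $17$ CM points to have weight \emph{exactly} (or at least) $120$, which you assert as ``the expected weight'' without deriving the gap sequence; the parity argument does give gaps $\{1,3,\dots,31\}$ and weight $120$, but you should say so.

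You also miss the shortcut that makes your entire second half unnecessary. The local analysis at a single order-$2$ elliptic point, carried one step further as in the paper, gives not merely ``$P\in\mathscr{W}$ with a non-generic gap sequence'' but the precise statement $\ell(2P)=2$: the parity constraint yields $\ell(K_C-2P)=\ell(K_C-P)$, and Riemann--Roch converts this into $\ell(2P)-\ell(P)=1$, so $\mathscr{L}(2P)$ contains a non-constant function. A degree-$2$ divisor with two sections on a curve of genus $16\ge 2$ is a $g^1_2$, i.e.\ it already furnishes the degree-$2$ map to $\mathbf{P}^1$ and hence hyperellipticity (the descent of the map itself to $F$, and then to $\Q$, is a separate matter handled in Theorem~\ref{thm:C-is-hyperelliptic-over-Q}). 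So one elliptic point of order $2$ suffices; the identity $\#\mathscr{W}=34$ is then a \emph{consequence} of hyperellipticity, needed later for the $2$-torsion field, rather than the engine of the proof. I recommend you restructure accordingly: prove $\ell(2P)=2$ at a CM point, conclude hyperellipticity, and only afterwards observe that the $17$ CM points together with their $17$ non-CM companions exhaust the branch locus.
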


\begin{proof} Let $\gamma \in \Gamma_\CO$ be an elliptic element of order $2$, and $P$ a fixed point by $\gamma$.
Then $P$ is a CM point by construction, and $\gamma$ acts on the local ring $\CO_{C,P}$ as an involution. More
specifically, letting $t$ be a uniformiser at $P$, we see that $\gamma$ acts on $t$ as: 
$$t (\bmod\, t^2) \mapsto -t (\bmod\, t^2).$$
This forces any global differential form in $H^0(C, \Omega^1_C)$, which vanishes at $P$, to vanish to {\it even} order. 
We claim that this implies that $P$ is a Weierstrass point. To prove this, we use Riemann Roch.

Let $K_C$ the canonical divisor. Then, we have $\ell(K_C - 2P) = \ell(K_C - P)$, i.e. every differential that vanishes 
at $P$ vanishes to order 2. By Riemann-Roch, we  have
$$\ell(K_C - 2P) - \ell(2P) =  \deg(K_C - 2P) - g + 1 = (2g-4) - g + 1 = g - 3;$$
and
$$\ell(K_C - P) - \ell(P) =  \deg(K_C - P) - g + 1 = (2g-3) - g + 1 = g - 2.$$
So, if $\ell(K_C - 2P) = \ell(K_C - P)$, then $\ell(2P) - \ell(P) = 1$. Now, since $\mathscr{L}(P)$ is the space of
constant functions,  we see that $\mathscr{L}(2P)$ must be non-trivial, and thus $P$ is a (hyperelliptic) Weierstrass point.

From the above argument, it follows that $C$ has $17$ hyperelliptic Weierstrass points that are all CM. By
Shimura reciprocity law, these CM points are all defined over the Hilbert class field $H_K$ of $K$, 
where $K$ is the CM-field defined in Subsection~\ref{subsec:CM-extension}. Let 
$M$ be the normal closure of $H_K$ over $F$. Then $[M:F] = 34$ and $\Gal(M/F) \simeq D_{17}$; and the
action of $\Gal(\overline{F}/F)$ on the set of Weierstrass points $\mathscr{W}$ must factor through it 
(see Subsection~\ref{subsec:galois-action}). Therefore, we must have $\#\mathscr{W} = 34$. In other words, $C$ has 
$34$ hyperelliptic Weierstrass points. Since $C$ has genus $16$, it must therefore be hyperelliptic by Proposition~\ref{prop:hyperellipticity}.  
\end{proof}

\begin{rem}\rm It follows from the proof of Theorem~\ref{thm:C-is-hyperelliptic} that half of the Weierstrass points
on $C$ are CM, while the remaining half are non CM. This means that the hyperelliptic involution must necessarily
be exceptional. However, this should be expected since $C = X_0^D(1)/\langle w_D\rangle$, where $w_D$ is the 
{\it unique} Atkin-Lehner involution acting on $X_0^D(1)$. 
\end{rem}

\begin{thm}\label{thm:AutC}
The automorphism group of the curve $C$ is $\Aut(C) = \Z/2\Z$. 
\end{thm}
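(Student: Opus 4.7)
The proof is essentially a squeeze between the two preceding results, so my plan has three short steps.

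First, I would establish the lower bound $\lvert \Aut(C)\rvert \geq 2$. By Theorem~\ref{thm:C-is-hyperelliptic}, the curve $C$ is hyperelliptic over $F$, so it carries a hyperelliptic involution $\iota \in \Aut(C)$. Since $C$ has genus $16 \geq 2$, this involution is non-trivial, giving a subgroup $\langle \iota \rangle \simeq \Z/2\Z$ inside $\Aut(C)$.

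Second, I would apply Lemma~\ref{lem:AutC-embeds} for the upper bound: the lemma produces an injection of $\Aut(C)$ into an admissible subgroup $H \subset \Aut(\mathcal{G}_{st})$ of exponent $2$, and by Lemma~\ref{lem:admissible} every such $H$ has order at most $2$. Hence $\lvert \Aut(C)\rvert \leq 2$.

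Combining these two bounds forces $\Aut(C) = \langle \iota \rangle \simeq \Z/2\Z$. The only thing to double-check is the compatibility of the fields of definition: the upper bound is derived from the $\gp$-adic uniformisation, which a priori controls automorphisms defined over a (possibly large) extension of $F_\gp$, while the lower bound comes from the hyperelliptic structure already known over $F$. Since $\langle \iota \rangle$ fills up the entire geometric automorphism group by the squeeze, $\iota$ must a fortiori be defined over $F$, so no additional descent argument is required here. There is no real obstacle beyond citing the earlier results correctly; the substance of the theorem lies in Theorem~\ref{thm:C-is-hyperelliptic} and Lemma~\ref{lem:AutC-embeds}, which between them pin the size of $\Aut(C)$ from both sides.
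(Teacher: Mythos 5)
Your argument is correct and is essentially identical to the paper's proof: the lower bound comes from the hyperelliptic involution supplied by Theorem~\ref{thm:C-is-hyperelliptic}, and the upper bound from the injection of $\Aut(C)$ into an admissible exponent-$2$ subgroup of $\Aut(\mathcal{G}_{st})$ via Lemma~\ref{lem:AutC-embeds} together with Lemma~\ref{lem:admissible}. Your extra remark on fields of definition is harmless but not needed, since the paper's $\Aut(C)$ is already taken over $F$ throughout.
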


\begin{proof}
By Theorem~\ref{thm:C-is-hyperelliptic}, the group $\Aut(C)$ is non-trivial since it contains the hyperelliptic involution. 
By Lemma~\ref{lem:AutC-embeds}, it injects into an admissible  subgroup $H$ of $\Aut(\mathcal{G}_{st})$ of order $2$.
\end{proof}

\begin{rem}\rm By Theorem~\ref{thm:AutC}, $\Aut(C) = \Z/2\Z$, so that $\Aut(X_0^D(1)) = (\Z/2\Z)^s$, with $1\le s \le 2$. 
We note that $s = 2$ if and only if the hyperellipitc involution on $C$ comes from  an exceptional automorphism on $X_0^D(1)$. 
We also note that it is conjectured that there are only finitely many Shimura curves $X$ defined over $\Q$ such that $\Aut(X)$ 
contains an exceptional automorphism (see~\cite{kr08}). This conjecture would imply that there are very few Shimura 
curve quotients defined over $\Q$ which have automorphisms arising from exceptional automorphisms. However, analogues of this 
conjecture have barely been explored over totally real fields.
\end{rem}

\begin{thm}\label{thm:C-is-hyperelliptic-over-Q}
The curve $C$ is hyperelliptic over $\Q$. 
\end{thm}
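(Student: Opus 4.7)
The plan is to build on Lemma~\ref{lem:field-of-defn} (which already gives that $C$ is defined over $\Q$) and Theorem~\ref{thm:AutC} (which pins down $\Aut(C)$), and then to descend the hyperelliptic map itself by producing a $\Q$-rational divisor of odd degree on the quotient conic.

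First, I would observe that the hyperelliptic involution $\iota$ is automatically defined over $\Q$. Indeed, since $\overline{F}=\overline{\Q}$, Theorem~\ref{thm:AutC} gives $\Aut(C_{\overline{\Q}})=\Z/2\Z$, whose unique non-trivial element $\iota$ is therefore fixed by the natural action of $\Gal(\overline{\Q}/\Q)$ on $\Aut(C_{\overline{\Q}})$. Hence $\mathscr{C}:=C/\langle \iota\rangle$ is a smooth projective genus-zero curve defined over $\Q$, i.e.\ a conic; it remains to show $\mathscr{C}\simeq \PP^1_\Q$.

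To this end, I would exploit the $17$ CM Weierstrass points $P_1,\dots,P_{17}$ on $C$ produced in the proof of Theorem~\ref{thm:C-is-hyperelliptic}. These points are characterised intrinsically as the fixed points of the $17$ conjugacy classes of order-$2$ elliptic elements in $\Gamma_\CO$, equivalently as the CM Weierstrass points attached to the extension $K/F$ of Subsection~\ref{subsec:CM-extension}. Since this characterisation is intrinsic to $C_{\overline{\Q}}$, the set $\{P_1,\dots,P_{17}\}$ is $\Gal(\overline{\Q}/\Q)$-stable, and so $D:=\sum_i P_i$ is a $\Q$-rational effective divisor of degree $17$. Because each $P_i$ is fixed by $\iota$, the quotient map $\pi\colon C\to\mathscr{C}$ sends the $P_i$ to $17$ distinct points, and the projection formula gives $\pi_*(P_i)=\pi(P_i)$; thus $\pi_*(D)$ is a $\Q$-rational effective divisor of degree $17$ on $\mathscr{C}$.

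Finally, any smooth conic $\mathscr{C}$ over $\Q$ carrying a $\Q$-rational divisor of odd degree has a $\Q$-rational point: combining such a divisor with the $\Q$-rational anticanonical class (of degree $2$) yields a $\Q$-rational divisor class of degree $1$, which by Riemann--Roch is represented by a single $\Q$-rational point. Hence $\mathscr{C}\simeq\PP^1_\Q$, and the degree-$2$ map $C\to\PP^1_\Q$ is defined over $\Q$. The one delicate point in the sketch is the Galois-equivariance of the CM Weierstrass characterisation over $\Q$ (not just over $F$); this is immediate because both \emph{being a Weierstrass point} and \emph{being a CM point attached to $K/F$} are intrinsic properties of $C_{\overline{\Q}}$ preserved by $\Gal(\overline{\Q}/\Q)$, even though the individual $P_i$ may be shuffled into a larger single orbit.
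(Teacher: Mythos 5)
Your proposal is correct, but it reaches the conclusion by a genuinely different route from the paper. The paper's own proof is very short: having descended $C$ to $\Q$ (Lemma~\ref{lem:field-of-defn}) and shown that the reduced automorphism group $\Aut(C)/\langle\iota\rangle$ is trivial (Theorem~\ref{thm:AutC}), it invokes the descent machinery for marked hyperelliptic curves of Sijsling--Voight \cite[Proposition~4.8]{sv16}, the marked point being the unique CM point attached to $\Q(\zeta_{32})/F$ (unique and $F$-rational because $\Q(\zeta_{32})$ has class number one). You instead avoid \cite{sv16} altogether: you descend $\iota$ directly (which in fact works for any hyperelliptic curve of genus $\ge 2$, since the hyperelliptic involution is always the unique central involution and hence Galois-equivariant, so Theorem~\ref{thm:AutC} is not strictly needed at that step), and you then trivialise the quotient conic $\mathscr{C}=C/\langle\iota\rangle$ by exhibiting a $\Q$-rational divisor of odd degree, namely the image of the $17$ CM Weierstrass points, so that $\mathscr{C}$ has index one and hence a rational point by Riemann--Roch. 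This is a more elementary and self-contained argument, and it makes transparent exactly where the oddness of $17$ enters. The one step you should tighten is the $\Gal(\overline{\Q}/\Q)$-stability of the set $\{P_1,\dots,P_{17}\}$: the characterisation via order-$2$ elliptic elements of $\Gamma_\CO$ lives in the complex uniformisation and is not obviously Galois-equivariant over $\Q$. The clean justification is that these $17$ points are precisely the branch points of the degree-$2$ quotient map $X_0^D(1)\to C$, and since both $X_0^D(1)$ and $w_D$ are defined over $\Q$ by Lemma~\ref{lem:field-of-defn}, this map and hence its branch divisor are $\Q$-rational; this also rules out the a priori possibility that $\Gal(\overline{\Q}/\Q)$ mixes the CM Weierstrass points with the $17$ non-CM ones (which would leave you only with the even-degree divisor of all $34$ Weierstrass points). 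With that substitution your argument is complete, and each approach buys something: the paper's is shorter given the literature, while yours isolates the concrete arithmetic reason the conic is split.
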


\begin{proof} Since $C$ descends to $\Q$, it is enough to show that the hyperelliptic involution $\iota:\,  C\to C$ also descends to $\Q$. 
By Theorem~\ref{thm:AutC}, $\Aut(C)/\langle \iota\rangle$ is trivial. Furthermore, the field $\Q(\zeta_{32})$ is a splitting field for $D$ 
whose class number is one. So the CM point attached to the extension $\Q(\zeta_{32})/F$ is defined over $F$. So $C$ descends to $\Q$ 
as a hyperelliptic curve by~\cite[Proposition 4.8]{sv16}.
\end{proof}

\begin{figure}
\caption{The dual graph $\mathcal{G}_{st}$ of the stable model for the quotient $\mathscr{M}/\langle w_\gp\rangle$.}
\begin{center}
\vspace{-1.5cm}
\includegraphics[width = .80\textwidth]{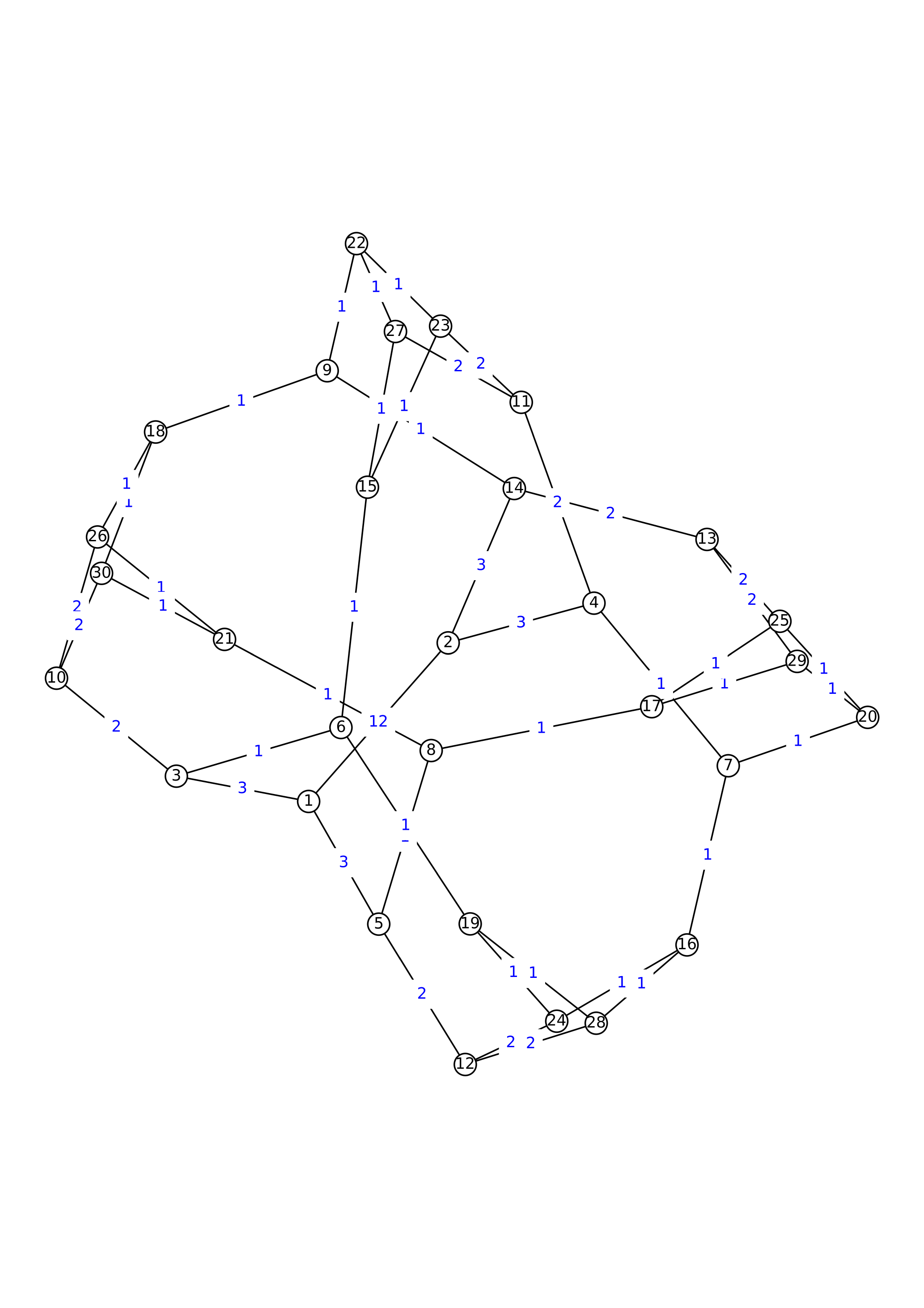}
\vspace{-2cm}
\end{center}
\label{figure:st-dual-graph}
\end{figure}

\begin{rem}\rm 
One should be able to compute an equation for $C$ by using~\cite{voight-willis14}. However, currently, the strategy for doing so is not fully 
implemented. It should also be possible to use a generalisation of the $p$-adic approach discussed in~\cite{cm14}, which was inspired 
by~\cite{kur79, kur94}. Given that the determination of equations for Shimura curves defined over totally real fields is one question that is of 
independent interest in its own right, we hope to return to this in the future. 
\end{rem}

\begin{rem}\rm  
We note that Michon~\cite{mich81} (and also unpublished work of Ogg) provides a complete list of all hyperelliptic Shimura 
curves with square-free level defined over $\Q$. Shimura curves defined over $\Q$ which admit hyperelliptic quotients have
also been investigated quite a bit, see for example~\cite{mol12, gm16, gy17} and references therein. In contrast, there has
been very little work on these types of questions for Shimura curves defined over totally real fields $F$ larger than $\Q$. This makes 
Theorem~\ref{thm:C-is-hyperelliptic} of the more striking. Indeed, not only does it give one of the few examples of Shimura curves with a 
hyperelliptic quotient over a totally real field, but also one whose genus is larger than most known examples over $\Q$. 
\end{rem}

\subsection{The Jacobian varieties $\Jac(X_0^D(1))$ and $\Jac(C)$}\label{subsec:decomposition}
In this section, we explain the connection between the simple factors of $\Jac(X_0^D(1))$ and the conjectures in~\cite{gro16}. 
There is more in~\cite{cd17}, where this connection is established via lifts of Hilbert modular forms.

From the discussion in Subsections~\ref{subsec:newforms} and~\ref{subsec:curves}, 
we have the decomposition for $\mathrm{Jac}(X_0^D(1))$ over $F$ (up to isogeny):
\begin{align}\label{eq:jac-decomp}
\mathrm{Jac}(X_0^D(1)) \sim A_f \times A_{f'} \times A_g \times A_{g'}\times A_h.
\end{align}
From~\eqref{eq:jac-decomp}, and the fact that $w_D = -w$, we see that 
\begin{align}\label{eq:jac-decomp2}
\Jac(C) \sim A_f \times A_{f'} \times A_g \times A_{g'}.
\end{align}
The fourfolds $A_f$ and $A_{f'}$ (resp. $A_{g}$ and $A_{g'}$) are Galois conjugate. 
We will see later that one of consequences of the compatibility between the base change action and 
Hecke orbits is that the decompositions~\eqref{eq:jac-decomp} and~\eqref{eq:jac-decomp2} descend to subfields of $F$.

\begin{thm}\label{thm:Ah} The abelian variety $A_h$ descends to a $24$-dimensional variety $B_h$ defined over $\Q$,
with good reduction outside $2$, such that $\End_{\Q}(B_h)\otimes\Q = K_h$ and 
$$L(B_h, s) = \prod_{\Pi' \in [\Pi_h]} L(\Pi', s),$$
where $\pi_h$ is the automorphic representation of $\GL_2(\A_F)$ attached to $h$, $\Pi_h$ it lifts to 
$\GSpin_{17}(\A_{\Q})$, and $[\Pi_h]$ the Hecke orbit of $\Pi_h$.
\end{thm}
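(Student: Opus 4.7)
The plan is to descend $A_h$ from $F$ to $\Q$ using the Galois compatibility of the Hecke orbit recorded in Table~\ref{table:eigenforms}. Writing $\sigma$ for a generator of $\Gal(F/\Q)$ and $\tau$ for a generator of $\Gal(L_h/K_h)$, we have ${}^\sigma h = h^\tau$, with both groups cyclic of order $8$. Consequently ${}^\sigma A_h$ is $F$-isogenous to $A_h$ via an isogeny $\phi_\sigma$ that intertwines the natural $L_h$-action on $A_h$ with its $\tau$-twist. The family $\{\phi_{\sigma^i}\}$ provides a descent datum for the cyclic extension $F/\Q$; after adjusting within the isogeny class so that the cocycle condition $\phi_{\sigma^{i+j}} = \phi_{\sigma^i} \circ {}^{\sigma^i}\!\phi_{\sigma^j}$ is satisfied, Weil descent produces an abelian variety $B_h/\Q$ of dimension $24$ with $B_h \otimes_\Q F$ isogenous to $A_h$.

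For the endomorphism ring, the descent data transport the $L_h$-action on $A_h$ into an $L_h$-action on $B_h \otimes_\Q F$ on which $\Gal(F/\Q)$ acts through $\sigma \mapsto \tau$, so the $\Q$-rational endomorphisms are the fixed subalgebra $L_h^{\langle \tau \rangle} = K_h$. For good reduction, Lemma~\ref{lem:field-of-defn} supplies a $\Q$-model of $X_0^D(1)$; the \v{C}erednik-Drinfel'd uniformization (Theorem~\ref{thm:cerednik-drinfeld}), together with the fact that $D$ is unramified at every finite prime except $\gp \mid 2$, shows that this model has good reduction at every prime of $\Q$ not above $2$. Since $B_h$ is an isogeny factor of $\Jac(X_0^D(1))$ over $\Q$, the N\'eron-Ogg-Shafarevich criterion transfers good reduction to $B_h$.

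For the $L$-function identity, decompose the $\ell$-adic Tate module $V_\ell(B_h)$ using the $G_\Q$-equivariant action of $K_h \otimes_\Q \Q_\ell$: this yields $V_\ell(B_h) = \bigoplus_{\iota : K_h \hookrightarrow \overline{\Q}_\ell} V_\iota$ with each $V_\iota$ a symplectic $16$-dimensional $G_\Q$-representation (the symplectic form coming from the canonical polarization on $\Jac(X_0^D(1))$). Restricted to $G_F$, each $V_\iota$ decomposes as the direct sum of the eight $2$-dimensional Galois representations $\rho_{h^\upsilon}$ for embeddings $\upsilon : L_h \hookrightarrow \overline{\Q}_\ell$ lifting $\iota$; hence $V_\iota$ carries exactly the Hecke data of the automorphic induction from $F$ to $\Q$ of the $\iota$-component of $\pi_h$. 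Via the Langlands duality ${}^L\!\GSpin_{17} \simeq \GSp_{16}$, together with the transfer and modularity results of~\cite{cd17} realising the relevant cases of the Gross-type conjectures of~\cite{gro16}, each $V_\iota$ is attached to a cuspidal automorphic representation $\Pi' \in [\Pi_h]$ of $\GSpin_{17}(\A_\Q)$, and matching Euler factors yields $L(B_h, s) = \prod_{\Pi' \in [\Pi_h]} L(\Pi', s)$. The main obstacle is this last functorial transfer: cuspidality of the induction can be read off from the Hecke eigenvalues, and symplectic self-duality from the Jacobian polarization, but exhibiting a genuine automorphic representation on $\GSpin_{17}(\A_\Q)$ is deep, and the cleanest route will be to reduce to the parallel analysis in~\cite{cd17}.
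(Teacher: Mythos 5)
Your proposal reaches the right conclusions and ultimately leans on the same essential input as the paper, namely \cite[Theorem 5.4]{cd17}, but it gets there by a genuinely different route. The paper's proof is very short: from the relation ${}^\sigma h = h^\tau$ in Table~\ref{table:eigenforms} it invokes \cite[Theorem 5.4]{cd17} to produce the lift $\Pi_h$ to a split form of $\GSpin_{17}(\A_\Q)$ with field of rationality the cubic field $K_h$; the existence of $B_h$ over $\Q$, the identity $\End_\Q(B_h)\otimes\Q = K_h$, and the $L$-function factorisation over the three-element Hecke orbit $[\Pi_h]$ are all read off from that lift by functoriality, and good reduction outside $2$ follows simply from the level of $h$ being the unique prime $\gp$ above $2$. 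You instead build $B_h$ first, by Weil descent along the cyclic extension $F/\Q$ using the isogenies ${}^\sigma A_h \sim A_h$, and only afterwards match its Tate module with the automorphic data. This buys a more concrete picture of $B_h$ and a transparent derivation of $\End_\Q(B_h)\otimes\Q = L_h^{\langle\tau\rangle} = K_h$, but it incurs a cost the paper avoids: the phrase ``after adjusting within the isogeny class so that the cocycle condition is satisfied'' papers over a genuine potential obstruction (a class in $H^2(\Gal(F/\Q), L_h^\times)$ in the style of Ribet's theory of building blocks), and you do not explain why it vanishes here; in the paper this issue is absorbed into \cite{cd17}, which produces the descended object directly. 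One further slip: Theorem~\ref{thm:cerednik-drinfeld} describes the \emph{bad} (totally degenerate) reduction of $X_0^D(1)$ at $\gp$, so it cannot be the source of good reduction away from $2$; that instead follows from the smooth integral models of the Shimura curve away from $\mathrm{disc}(D)$, or, as in the paper, from the fact that the conductor of $\pi_h$ is supported at $\gp$ and $F/\Q$ is ramified only at $2$. Your closing remark correctly identifies the deep step --- exhibiting the automorphic representation on $\GSpin_{17}(\A_\Q)$ --- and the reduction to \cite{cd17} is exactly what the paper does.
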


\begin{proof}
By Table~\ref{table:eigenforms}, there exists a generator $\tau \in \Gal(L_h/K_h)$ such that  ${}^\sigma\! h = h^\tau$. 
So, by ~\cite[Theorem 5.4]{cd17}, $\pi_h$ lifts to an automorphic representation $\Pi_h$ on a 
split form of $\GSpin_{17}(\A_{\Q})$, with field of rationality the cubic field $K_h$. The Hecke orbit $[\Pi_h]$ of $\Pi_h$ 
has $3$ elements, and by functoriality 
$$L(B_h, s) = \prod_{\Pi' \in [\Pi_h]} L(\Pi', s).$$
It follows that $\End_{\Q}(B_h)\otimes\Q = K_h$. Since the level of the form $h$ is the unique prime $\gp$ above $2$,  
$B_h$ has good reduction outside $2$. 
\end{proof}

\medskip
Now, we turn to the quotient $C := X_0^D(1)/\langle w_D \rangle$.

\begin{thm}\label{thm:fourfolds}
The abelian varieties $A_f$ and $A_{f'}$ (resp. $A_{g}$ and $A_{g'}$) descend to pairwise conjugate fourfolds $B_f$ and 
$B_{f'}$ (resp. $B_{g}$ and $B_{g'}$) over $\Q(\sqrt{2})$, with trivial endomorphism rings, such that 
\begin{align*}
L(B_f, s) &= L(\Pi_f, s)\,\,\text{and}\,\, L(B_{f'}, s) = L(\Pi_{f'}, s),\\
L(B_g, s) &= L(\Pi_g, s)\,\,\text{and}\,\, L(B_{g'}, s) = L(\Pi_{g'}, s),
\end{align*}
where $\pi_f, \pi_{f'}, \pi_g$ and $\pi_{g'}$ are the automorphic representations of $\GL_2(\A_F)$ attached to
$f$, $f'$, $g$ and $g'$, respectively; and $\Pi_f, \Pi_{f'}, \Pi_g$ and $\Pi_{g'}$ their respective lifts to 
$\GSpin_{9}/\Q(\sqrt{2})$. They have good reduction outside $(\sqrt{2})$. 
\end{thm}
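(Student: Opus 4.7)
The plan is to follow the same pattern as the proof of Theorem~\ref{thm:Ah}, using the Galois compatibility relations from Table~\ref{table:eigenforms} to identify the correct field of descent and then invoking the functoriality result \cite[Theorem~5.4]{cd17}.

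First I would identify the fixed field. Since $F = \Q(\zeta_{32})^+$ is cyclic of degree $8$ over $\Q$ with $2$ totally ramified, its unique quadratic subfield is $\Q(\sqrt{2})$, and this must coincide with $F^{\langle\sigma^2\rangle}$. The relation ${}^{\sigma^2}\!f = f^\tau$ in Table~\ref{table:eigenforms} says that $\sigma^2$ acts on the Hecke eigensystem of $f$ as the generator $\tau \in \Gal(L_f/\Q)$, so $A_f$ and ${}^{\sigma^2}\!A_f$ are Hecke conjugate, hence isogenous. This produces a descent $B_f$ of $A_f$ (up to isogeny) to $\Q(\sqrt{2})$, and similarly $B_{f'}$; the relation ${}^{\sigma}\!f = f'$ then gives $B_f^{\sigma} = B_{f'}$, so the two are $\Gal(\Q(\sqrt{2})/\Q)$-conjugate. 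An identical argument produces $B_g$ and $B_{g'}$.

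To obtain the $\GSpin_9$ lift, I would apply \cite[Theorem~5.4]{cd17} to the totally real extension $F/\Q(\sqrt{2})$ of degree $4$: $\pi_f$ transfers to an automorphic representation $\Pi_f$ on $\GSpin_{2\cdot 4 + 1}(\A_{\Q(\sqrt{2})}) = \GSpin_9(\A_{\Q(\sqrt{2})})$, and the identity $L(B_f, s) = L(\Pi_f, s)$ is a consequence of functoriality. Good reduction outside $(\sqrt{2})$ holds because $f$ has level $(1)$ on $D$ and $\mathrm{disc}(D)$ is supported at the prime $\gp$ above $(\sqrt{2})$.

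The main obstacle is showing that $\End_{\Q(\sqrt{2})}(B_f)\otimes\Q = \Q$. The Hecke action gives $\End(A_f)\otimes\Q = L_f$, and $\Gal(F/\Q(\sqrt{2})) = \langle\sigma^2\rangle$ acts on this algebra as $\tau \in \Gal(L_f/\Q)$. Since $\tau$ generates all of $\Gal(L_f/\Q) \simeq \Z/4\Z$, the fixed subalgebra is $L_f^{\langle\tau\rangle} = \Q$. This is the point where the behaviour differs sharply from Theorem~\ref{thm:Ah}: there, the analogous quotient produces the cubic field $K_h$ because $\tau$ generates only the index-$3$ subgroup $\Gal(L_h/K_h) \subset \Gal(L_h/\Q)$, whereas here the whole coefficient Galois group is absorbed into the base-change data $\Gal(F/\Q(\sqrt{2}))$ of the transfer. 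Applying the same count to $f'$, $g$, $g'$ completes the proof.
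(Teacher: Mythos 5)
Your proposal is correct and follows essentially the same route as the paper: the paper's proof likewise combines the relations in Table~\ref{table:eigenforms} with \cite[Theorem 5.4]{cd17} to produce the lifts $\Pi_f,\Pi_{f'},\Pi_g,\Pi_{g'}$ on $\GSpin_9/\Q(\sqrt{2})$ with field of rationality $\Q$, deducing the descent, the triviality of the endomorphism rings, the $L$-series identities by functoriality, and good reduction outside $(\sqrt{2})$ from the level. You merely make explicit what the paper leaves implicit, namely that $\Q(\sqrt{2}) = F^{\langle\sigma^2\rangle}$ and that $\End_{\Q(\sqrt{2})}(B_f)\otimes\Q = L_f^{\langle\tau\rangle} = \Q$ because $\tau$ generates all of $\Gal(L_f/\Q)$, in contrast with the $\GSpin_{17}$ case of Theorem~\ref{thm:Ah}.
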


\begin{proof} The identities in Table~\ref{table:eigenforms}, combined with \cite[Theorem 5.4]{cd17}, implies that 
$\pi_f, \pi_{f'}, \pi_g$ and $\pi_{g'}$ indeed lift to automorphic representations $\Pi_f, \Pi_{f'}, \Pi_g$ and $\Pi_{g'}$ on 
$\GSpin_9/\Q(\sqrt{2})$ with field of rationality $\Q$. Consequently, the fourfolds $A_f$, $A_{f'}$, $A_g$ and $A_{g'}$ 
descend to pairwise conjugate fourfolds $B_f$ and $B_{f'}$ (resp. $B_g$ and $B_{g'}$) such that
$$\End_{\Q(\sqrt{2})}(B_f) = \End_{\Q(\sqrt{2})}(B_{f'}) = \End_{\Q(\sqrt{2})}(B_g) = \End_{\Q(\sqrt{2})}(B_{g'}) = \Z.$$
The equalities of $L$-series follow by functoriality. For the same reason as above, the fourfolds have good reduction outside $(\sqrt{2})$. 
\end{proof}

\begin{rem}\rm The decomposition~\eqref{eq:jac-decomp} is only true {\it a priori} over $F$. 
However, Theorem~\ref{thm:Ah} and Theorem~\ref{thm:fourfolds} imply that it descends to $\Q(\sqrt{2})$. In fact, the
products $A_f \times A_{f'}$ (resp. $A_g \times A_{g'}$) further descend to $\Q$. And so, the decomposition~\eqref{eq:jac-decomp}
will descend to $\Q$ if we group them accordingly. 
\end{rem}

\subsection{The connectedness of $\Spec(\T)$} 
Let $\T$ be the $\Z$-subalgebra of $\End_\C(S_{2}^D(1))$ acting on $S_2^D(1)$. We recall that $S_2^D(1)$ is isomorphic
to $S_2(\gp)^{\rm new}$ as a Hecke module.  

\begin{prop}\label{prop:hecke-connectedness} $\Spec(\T)$ is connected.
\end{prop}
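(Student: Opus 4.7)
The plan is to apply Mazur's argument (see \cite[Proposition 10.6]{maz77}). Connectedness of $\Spec(\T)$ is equivalent to the absence of nontrivial idempotents in $\T$. Now $\T \otimes \Q$ splits as the product of the five coefficient fields attached to the simple Hecke constituents $A_f, A_{f'}, A_g, A_{g'}, A_h$ of $S_2^D(1)$ (see \S\ref{subsec:newforms}). So it suffices to exhibit, for every partition of these five constituents into two nonempty sets, a congruence modulo some rational prime $\ell$ between newforms on opposite sides of the partition, i.e.\ two systems of Hecke eigenvalues that coincide modulo a prime above $\ell$ in a common coefficient ring. Equivalently, form a graph $G$ with vertices the five constituents and an edge whenever such a congruence exists; then $\Spec(\T)$ is connected if and only if $G$ is.

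I would establish connectedness of $G$ by producing explicit congruences through direct computation of Hecke eigenvalues, extending the data of \S\ref{subsec:newforms} with the Hilbert modular form algorithms of \cite{dd08, dv13, gv11} implemented in \verb|Magma|. Concretely, the approach is to compute $T_\gq$-eigenvalues of $f, f', g, g', h$ at a wide range of primes $\gq$ of $F$, reduce them modulo primes above $3$ and above $5$ in the respective coefficient fields $L_f, L_g, L_h$, and search for matching systems across constituents. The Galois structure of the constituents under $\Gal(F/\Q)$, whose orbits on $\{A_f, A_{f'}, A_g, A_{g'}, A_h\}$ are $\{A_f, A_{f'}\}$, $\{A_g, A_{g'}\}$ and $\{A_h\}$, is crucial here, because any mod-$\ell$ congruence is $\Gal(F/\Q)$-equivariant and therefore yields edges of $G$ in pairs: a mod-$3$ congruence between $f$ and $g$ automatically gives one between $f'$ and $g'$; a mod-$5$ congruence between $h$ and one of $f, f', g, g'$ yields two edges incident to $A_h$. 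With one suitably chosen congruence at $\ell=3$ (bridging the $\{A_f,A_{f'}\}$ and $\{A_g,A_{g'}\}$ orbits) and one at $\ell=5$ (linking $A_h$ into one of them), the resulting four edges connect all five vertices of $G$.

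Both congruences are genuinely necessary, and one cannot replace them by a mod-$2$ congruence. By Theorem~C, the mod-$2$ residual Galois representations of all five newforms have image $D_{17}$; but $F_{17} = D_{17} \rtimes \Z/8\Z$ is non-split, which (as emphasized in the introduction) produces exactly two $\Gal(F/\Q)$-permuted mod-$2$ congruence classes, so no single mod-$2$ congruence can merge them into a connected graph. The main obstacle I anticipate is computational: one must track eigenvalues and their reductions in three coefficient fields, one of which ($L_h$) has degree $24$ and sits inside a ray class field with nontrivial conductor, and then verify that the mod-$3$ and mod-$5$ matches persist at enough primes $\gq$ to force genuine equality of maximal ideals in $\T$, rather than reflecting coincidences at a small finite set of primes.
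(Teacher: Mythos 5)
Your strategy is viable but genuinely different from the paper's proof, which is a one-line soft argument: since $X_0^D(1)$ has prime level and each Hecke constituent occurs with multiplicity one, Mazur's proof of \cite[Proposition 10.6]{maz77} applies verbatim --- roughly, a nontrivial idempotent of $\T$ would force an orthogonal decomposition of the character group of the purely toric reduction at $\gp$ (the Brandt module), contradicting the connectedness of the dual graph of the special fibre; no congruences need to be exhibited at all. Your route --- reduce connectedness of $\Spec(\T)$ to connectedness of the ``congruence graph'' on the five constituents, then produce explicit congruences modulo $3$ and $5$ --- is exactly what the paper carries out in the two propositions immediately following this one, but there it is presented as a refinement (identifying which congruences realise the already-established connectedness), not as the proof. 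Two remarks. First, the real mathematical content of your approach is the step you flag but do not resolve: agreement of eigenvalues at many primes $\gq$ does not by itself force equality of maximal ideals, and the paper closes this gap by showing the relevant residual representations are base changes and then invoking the multiplicity-one/socle argument of \cite[\S 6]{bcddf18}; without such an argument (or an effective Sturm-type bound) your proof is incomplete. Second, the roles of the primes are the reverse of what you guessed: the mod $5$ congruence links $f, f', g, g'$ together, while the mod $3$ congruence links $f, f'$ to $h$ --- immaterial for your plan, since the computation would reveal this, but worth noting. Your observation that no single mod $2$ congruence can do the job is correct and matches the discussion in the introduction. What the paper's approach buys is independence from any computation; what yours buys is an explicit description of how the components of $\Spec(\T \otimes \F_\ell)$ glue, which is of independent interest and is why the paper records it anyway.
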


\begin{proof} The curve $X_0^D(1)$ is a Shimura curve of prime level, and each Hecke constituent appears with multiplicity one.
So, the proof in~\cite[Proposition 10.6]{maz77} applies readily. 
\end{proof}

The following two propositions determine the congruences which realise the connectedness of $\Spec(\T)$.

\begin{prop} 
The forms $f,$ $f',$ $g$ and $g'$ are congruent modulo $5$.
\end{prop}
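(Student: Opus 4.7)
The plan is to verify the congruence directly via reduction of Hecke eigenvalues modulo a suitable prime above $5$, combined with an explicit Sturm-type bound for Hilbert modular forms over $F$, following the computational style of the rest of the paper.

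First, I would locate primes above $5$ in both coefficient fields. Since $L_f = \Q(\zeta_{15})^+$ contains $\Q(\zeta_5)^+$, the rational prime $5$ is totally ramified in $L_f$, yielding a unique prime $\lambda_f \mid 5$ with residue field $\F_5$. Similarly, $L_g$ (the quartic subfield of $\Q(\zeta_{95})^+$) contains $\Q(\sqrt{5})$, so $5$ ramifies in $L_g$ and there is at least one prime $\lambda_g \mid 5$ of $L_g$. Working in the compositum $L = L_f \cdot L_g$ and fixing a prime $\lambda \mid 5$ of $L$ lying over both $\lambda_f$ and $\lambda_g$, all four $q$-expansions reduce to systems of eigenvalues in the common residue field $\F_\lambda$, making the congruence $f \equiv f' \equiv g \equiv g' \pmod\lambda$ a well-posed statement to test.

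Second, I would invoke an explicit Sturm-type bound for Hilbert modular cusp forms of parallel weight $2$ and level $\gp$ over $F$ (equivalently, via Jacquet--Langlands, for quaternionic forms on the definite algebra $B$ of $\S \ref{subsec:quat-algs}$). Such a bound provides an integer $N = N(F,\gp)$ such that the congruence of Hecke eigensystems mod $\lambda$ is forced by the finite condition $a_\gq(f) \equiv a_\gq(g) \pmod\lambda$ for all prime ideals $\gq \ne \gp$ with $\N\gq \le N$. Because the Galois group $\Gal(F/\Q) = \langle \sigma \rangle$ acts on $\gq$ and on the coefficient fields compatibly, and because the relations ${}^\sigma f = f'$ and ${}^\sigma g = g'$ of Table~\ref{table:eigenforms} hold, a single congruence $f \equiv g \pmod\lambda$ bootstraps under $\sigma$ to $f' \equiv g' \pmod{\lambda'}$ for the conjugate prime $\lambda'$; the four-fold congruence then reduces to verifying one (or at most two) pairs explicitly.

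Finally, I would carry out the Hecke computation in the Hilbert Modular Forms Package in Magma, enumerating prime ideals $\gq$ of $\CO_F$ of norm at most $N$, computing $a_\gq(f) \in \CO_{L_f}$ and $a_\gq(g) \in \CO_{L_g}$, and reducing modulo $\lambda$. The main obstacle is practical rather than conceptual: the Sturm bound over the octic field $F$ tends to be sizeable, so the verification requires substantial machine computation but no new theoretical input. A structural alternative, which would bypass the Sturm bound, would be to identify the common mod $5$ residual Galois representation $\bar\rho \colon \Gal(\overline{F}/F) \to \GL_2(\F_\lambda)$ attached to each form, show it is unramified outside $\gp$ with the prescribed image, and conclude by Chebotarev; but the direct eigenvalue comparison is cleaner and matches the idiom of the paper.
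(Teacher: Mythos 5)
Your proposal is correct in outline but takes a genuinely different route from the paper. The paper does not use a Sturm bound at all: it passes to the mod~$5$ residual Galois representations $\bar{\rho}_{f,5},\bar{\rho}_{f',5},\bar{\rho}_{g,5},\bar{\rho}_{g',5}$, uses the total ramification of $5$ in \emph{both} $L_f$ and $L_g$ (so the residue fields are $\F_5$ and the primes above $5$ are fixed by $\tau$) together with the relations ${}^\sigma\!f=f'$, ${}^{\sigma^2}\!f=f^\tau$ to show that $\bar{\rho}_{f,5}=\bar{\rho}_{f',5}$ and $\bar{\rho}_{g,5}=\bar{\rho}_{g',5}$ are base changes from $\Q(\sqrt{2})$, and then identifies $\bar{\rho}_{f,5}\simeq\bar{\rho}_{g,5}$ by the multiplicity one (socle) argument of~\cite[\S 6]{bcddf18} applied to the mod~$5$ Hecke module --- the same device used later for the mod~$2$ eigensystems in Proposition~\ref{prop:mod-2-constituents}. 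What the paper's route buys is that the only finite computation needed is with the structure of the finite-rank Brandt/Hecke module over a partial Hecke algebra, with no appeal to an analytic or geometric bound. Your route buys directness, but its weakest link is precisely the ingredient you invoke as a black box: an \emph{explicit} Sturm-type bound for parallel weight $2$ Hilbert cusp forms of level $\gp$ over the octic field $F$ is not off-the-shelf, and you should either cite a concrete such bound or replace it by the (equivalent, and computationally verifiable) statement that the operators $T_\gq$ with $\N\gq\le N$ generate $\T$ as a $\Z$-algebra for some explicit $N$, after which agreement of the two eigensystems on generators forces agreement everywhere. Two smaller points: you should upgrade ``$5$ ramifies in $L_g$'' to ``$5$ is totally ramified in $L_g$'' (which is true, and is what the paper uses), since having residue field $\F_5$ on both sides is what makes ``congruent modulo $5$'' unambiguous and lets your $\sigma$-bootstrap return the \emph{same} prime $\lambda$ rather than a conjugate one; and note that your closing ``Chebotarev alternative'' is not actually cheaper, since proving an isomorphism of residual representations by comparing Frobenius traces still requires an effective bound of Faltings--Serre type.
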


\begin{proof} The prime $5$ is totally ramified in $L_f = L_{f'}$. Let $\gP_5$ be the unique prime above it, and 
$\rho_{f, 5}, \rho_{f', 5}: \, \Gal(\Qbar/F) \to \GL_2(\CO_{L_f, \gP_5})$ the $\gP_5$-adic representations attached
to $f$ and $f'$, respectively. By reduction modulo $\gP_5$, we get two representations $\bar{\rho}_{f, 5}, \bar{\rho}_{f', 5}: \, \Gal(\Qbar/F) \to \GL_2(\F_5)$. 
From Table~\ref{table:eigenforms}, we have and ${}^\sigma\!f = f'$, ${}^{\sigma^2}\!f = f^\tau$. Also, since $\gP_5$ is totally ramified
in $L_f$, we have $\tau(\gP_5) = \gP_5$. It follows that $\bar{\rho}_{f, 5} = \bar{\rho}_{f', 5}$ is a base change from $\Q(\sqrt{2})$. 
The prime $5$ is also totally ramified in $L_g = L_{g'}$. With obvious notations, the same argument as above shows that 
$\bar{\rho}_{g, 5} = \bar{\rho}_{g', 5}$ is also a base change from $\Q(\sqrt{2})$.

By using the multiplicity one argument in~\cite[\S 6]{bcddf18}, we show that $\bar{\rho}_{f, 5} \simeq \bar{\rho}_{g, 5}$. This implies that $f$, $f'$, $g$ 
and $g'$ are congruent modulo $5$. 
\end{proof}

\begin{prop} 
The forms $f,$ $f'$ and $h$ are congruent modulo $3$.
\end{prop}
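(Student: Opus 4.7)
The plan is to mirror the proof of the preceding proposition (on the mod $5$ congruence), now involving the form $h$ as well. The strategy has three steps: analyze the ramification of $3$ in the coefficient fields, use the Galois conjugation relations of Table~\ref{table:eigenforms} to descend the residual representations, and then apply a multiplicity-one argument to match them.

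\textbf{Step 1 (Ramification of 3).} First I would determine how $3$ factors in $L_f = \Q(\zeta_{15})^+$. In $\Q(\zeta_{15})$ the prime $3$ has decomposition $(e, f, g) = (2, 4, 1)$, and descending to the real subfield yields $(e, f, g) = (2, 2, 1)$, so there is a unique prime $\gP_3$ of $L_f$ above $3$, with residue field $\F_9$; in particular $\gP_3$ is fixed by every element of $\Gal(L_f/\Q)$, and the image of $\tau$ on the residue field is either trivial or the Frobenius of $\F_9/\F_3$. Next I would carry out the analogous analysis for $L_h$: reducing $r(x) = x^3 + x^2 - 229x + 167$ modulo $3$ gives the splitting of $3$ in $K_h$, and the ray class description of $L_h/K_h$ with conductor $\gc = (\tfrac{1}{2}(c^2 - 16c + 25))$ then determines the splitting in $L_h$. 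I would isolate a prime $\gQ_3$ of $L_h$ above $3$ and read off the induced action $\bar\tau'$ of $\tau$ on the corresponding residue field.

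\textbf{Step 2 (Descent of residual representations).} Let $\bar\rho_{f, 3}, \bar\rho_{f', 3}, \bar\rho_{h, 3}$ denote the mod $3$ representations of $\Gal(\overline{F}/F)$ obtained by reducing at $\gP_3$ and $\gQ_3$. The identities from Table~\ref{table:eigenforms} translate into
\[
{}^\sigma\bar\rho_{f, 3} \cong \bar\rho_{f', 3},\qquad
{}^{\sigma^2}\bar\rho_{f, 3} \cong \bar\rho_{f, 3}^{\bar\tau},\qquad
{}^\sigma \bar\rho_{h, 3} \cong \bar\rho_{h, 3}^{\bar\tau'}.
\]
Combining these with the ramification data of Step 1, I would show that $\bar\rho_{f, 3}$ is a base change from a suitable subfield of $F$ (and similarly for $\bar\rho_{h, 3}$), playing the role of the base change from $\Q(\sqrt{2})$ in the mod $5$ proof. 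The residue degree of $\gP_3$ (resp.\ $\gQ_3$) controls the order of $\bar\tau$ (resp.\ $\bar\tau'$), which in turn dictates the power of $\sigma$ that fixes the residual representation.

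\textbf{Step 3 (Multiplicity one).} Finally I would invoke the multiplicity-one argument from \cite[\S 6]{bcddf18}, exactly as in the proof of the mod $5$ congruence, to deduce $\bar\rho_{f, 3} \cong \bar\rho_{h, 3}$. Together with the $\sigma$-conjugation relation ${}^\sigma f = f'$ this yields the required congruence $f \equiv f' \equiv h \pmod{3}$ after embedding all coefficient fields into a common residue field of characteristic $3$.

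\textbf{Main obstacle.} The principal difficulty, compared with the mod $5$ case, is the ramification analysis in Step 1 for $L_h$. Whereas $L_f$ is a small cyclic extension of $\Q$ in which the factorization of $3$ is transparent from cyclotomic theory, $L_h$ has degree $24$ and is only cyclic over the cubic field $K_h$; moreover, the ray class description involves the non-trivial conductor $\gc$, whose arithmetic at $3$ must be tracked carefully in order to pin down both $\gQ_3$ and the action $\bar\tau'$ on its residue field. Once this bookkeeping is complete, Steps 2 and 3 follow the template of the preceding proposition very closely.
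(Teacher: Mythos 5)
Your proposal follows the paper's proof essentially step for step: the paper likewise computes that $3$ has a unique prime $\gP_3$ in $L_f$ with $e = f = 2$, uses the relations ${}^\sigma\!f = f'$, ${}^{\sigma^2}\!f = f^\tau$, ${}^\sigma\!h = h^\tau$ together with $\tau$-invariance of the chosen primes to see that the mod $3$ representations are base changes from $\Q(\zeta_{16})^{+}$, and concludes with the multiplicity-one argument of \cite[\S 6]{bcddf18}. The only details you leave to computation are the ones the paper makes explicit, namely that $(3) = \gp_3\gp_3'$ in $K_h$ with $\gp_3'$ of inertia degree $2$ and totally ramified in $L_h$, so that the relevant residue field is again $\F_9$.
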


\begin{proof} There is a unique prime $\gP_3$ above $3$ in $L_f = L_{f'}$; it has inertia degree $2$ and ramification degree $2$. 
Let $\rho_{f, 3}, \rho_{f', 3}: \, \Gal(\Qbar/F) \to \GL_2(\CO_{L_f, \gP_3})$ the $\gP_3$-adic representations attached to $f$ and $f'$, 
respectively. By reduction modulo $\gP_3$, we get two representations $\bar{\rho}_{f, 3}, \bar{\rho}_{f', 3}: \, \Gal(\Qbar/F) \to \GL_2(\F_9)$. 
From Table~\ref{table:eigenforms}, we have and ${}^\sigma\!f = f'$, ${}^{\sigma^2}\!f = f^\tau$. Also, since $\gP_3$ is the unique prime
above $3$ in $L_f$, we have $\tau(\gP_3) = \gP_3$. It follows that $\bar{\rho}_{f, 3} = \bar{\rho}_{f', 3}$ is a base change from $\Q(\zeta_{16})^+$.

In the cubic subfield $K_h$ of $L_h$, the prime $3$ factors as $(3) = \gp_3 \gp_3'$, where $\gp_3$ has inertia degree $1$, and $\gp_3'$ inertia
degree $2$. The prime $\gp_3'$ is totally ramified in $L_h$. We let $\gP_3'$ be the unique prime above it, and 
$\rho_{h, 3}:\,\Gal(\Qbar/F) \to \GL_2(\CO_{L_h, \gP_3'})$ the $\gP_3'$-adic representation attached to $h$.  
By reduction modulo $\gP_3'$, we get a representation $\bar{\rho}_{h, 3}: \, \Gal(\Qbar/F) \to \GL_2(\F_9)$.  From Table~\ref{table:eigenforms}, 
we have and ${}^\sigma\!h = h^\tau$. Also, since $\gP_3'$ is the unique prime above $\gp_3'$ in $L_h$, we have $\tau(\gP_3') = \gP_3'$. 
It follows that $\bar{\rho}_{h, 3}$ is also a base change from $\Q(\zeta_{16})^+$.

By using the multiplicity one argument in~\cite[\S6]{bcddf18}, we show that $\bar{\rho}_{f, 3} \simeq \bar{\rho}_{h, 3}$. This implies that $f$, $f'$
and $h$ are congruent modulo $3$. 
\end{proof}

\section{\bf The $2$-torsion field of $\Jac(X_0^D(1))$ and the Harbater field}\label{sec:2-torsion-fields}

The main result of this section establishes that every simple factor of $\Jac(X_0^D(1))$ has a $2$-torsion field whose
normal closure is the Harbater field. We start with the following theorem. 

\begin{thm}\label{thm:harbater-field} Let $N$ the field of $2$-torsion of $\Jac(C)$ over $\Q$. Then $N$ is the Harbater field.
\end{thm}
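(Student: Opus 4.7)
The plan is to combine the classical description of the $2$-torsion of a hyperelliptic Jacobian with the Shimura-theoretic identification of the CM Weierstrass points, and to match the resulting field with the Harbater field $N$ via class field theory and Harbater's uniqueness.

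First, since $C/\Q$ is hyperelliptic of genus $16$ with $34$ Weierstrass points, $\Jac(C)[2]$ admits the standard description
\begin{equation*}
\Jac(C)[2] \cong \{\epsilon \in \F_2^{34}: \epsilon_1 + \cdots + \epsilon_{34} = 0\}/\langle (1, \dots, 1) \rangle
\end{equation*}
as a $\Gal(\overline{\Q}/\Q)$-module, with the action induced by the permutation action on the Weierstrass points. Because this representation of $S_{34}$ is faithful, the $2$-torsion field of $\Jac(C)$ over $\Q$ equals the field of definition (over $\Q$) of the set of Weierstrass points, i.e.\ the splitting field over $\Q$ of the hyperelliptic polynomial. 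It therefore suffices to show that this splitting field equals $N$.

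Second, I would identify the Hilbert class field $H_K$ with $N$ by class field theory. Since $K \subset \Q(\zeta_{64})$, the extension $K/\Q$ is Galois; a direct computation with $\beta = \zeta_{64}^{15}+\zeta_{64}^{17}$, whose stabiliser in $(\Z/64\Z)^\times$ is the order-$2$ subgroup $\{1,31\}$, shows that $\Gal(K/\Q)$ is cyclic of order $16$. Functoriality of the Hilbert class field then makes $H_K/\Q$ Galois of degree $272$, with short exact sequence
\begin{equation*}
1 \to \Cl(K)\cong\Z/17\Z \to \Gal(H_K/\Q) \to \Gal(K/\Q)\cong\Z/16\Z \to 1.
\end{equation*}
The conjugation action defines $\psi:\Z/16\Z \to (\Z/17\Z)^\times$. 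The $D_{17}$ structure of $\Gal(H_K/F)$ established in the proof of Theorem~\ref{thm:C-is-hyperelliptic} forces $\psi(\sigma^8)=-1$ for a generator $\sigma$ of $\Gal(K/\Q)$. Writing $\psi(\sigma)=g$, the relation $g^8=-1$ in the cyclic group $(\Z/17\Z)^\times$ forces $g$ to have order exactly $16$, so $\psi$ is an isomorphism and $\Gal(H_K/\Q)\cong F_{17}$. Since $K/\Q$ is unramified outside $2$ and $H_K/K$ is everywhere unramified, $H_K/\Q$ is unramified outside $2$, and the uniqueness of the Harbater field yields $H_K=N$.

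Third, to complete the identification, it remains to show that all $34$ Weierstrass points are defined over $H_K=N$ (the $17$ CM ones being so by Shimura reciprocity, as in the proof of Theorem~\ref{thm:C-is-hyperelliptic}). This is the main obstacle. The cleanest route is via Theorem~C: since $\Jac(C)\sim A_f\times A_{f'}\times A_g\times A_{g'}$ and each simple factor has a $2$-torsion field whose normal closure over $\Q$ is $N$, the $2$-torsion field of $\Jac(C)/\Q$ has normal closure $N$; being itself Galois over $\Q$ (as $\Jac(C)$ is defined over $\Q$) and containing $N$ (from the CM Weierstrass points), it must equal $N$. Thus the principal technical work reduces to Theorem~C, whose proof occupies Section~\ref{sec:2-torsion-fields}. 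The resulting inclusion $F \subset K \subset H_K = N$ is consistent: $F$ arises as the fixed field in $N$ of the unique normal subgroup $D_{17}$ of order $34$ in $F_{17}$, giving the expected cyclic quotient $\Gal(F/\Q) \cong \Z/8\Z$.
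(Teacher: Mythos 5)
Your overall skeleton is reasonable, and your middle step is in fact more explicit than the paper's: where the paper's proof simply says that ``direct calculations'' give $\Gal(N/\Q)\simeq F_{17}$, you compute $\Gal(K/\Q)\simeq\Z/16\Z$ from the stabiliser $\{1,31\}$ of $\beta$ in $(\Z/64\Z)^\times$ and then use $\Gal(H_K/F)\simeq D_{17}$ to force the conjugation action $\psi\colon\Z/16\Z\to(\Z/17\Z)^\times$ to be faithful, so that $H_K$ is already Galois over $\Q$ with group $F_{17}$ and Harbater's uniqueness gives $H_K=N$. That computation is correct. Your first step (the permutation description of $\Jac(C)[2]$ and its faithfulness, so that the $2$-torsion field is the splitting field of the hyperelliptic polynomial) is also correct and is essentially what the paper uses implicitly.

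The gap is in your third step. First, there is a circularity hazard: the paper's \emph{first} proof of Theorem~\ref{thm:2-torsion-fields} begins by invoking Theorem~\ref{thm:harbater-field}, so if you appeal to Theorem~C here you must use its second (congruence/dihedral) proof, which is independent; you do not say so. More seriously, Theorem~C controls the $2$-torsion fields of the simple \emph{isogeny} factors $A_f,A_{f'},A_g,A_{g'}$, and an isogeny of even degree can change the $2$-torsion field. What you can legitimately deduce is only that the semisimplification of $\Jac(C)[2]$ becomes trivial over $N$, hence that $\Gal(L/N)$ is a $2$-group, where $L$ is the actual $2$-torsion field of $\Jac(C)$. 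Since there do exist nontrivial $2$-extensions of $N$ unramified outside $2$ and $\infty$ (for instance $N(i)$: one checks $i\notin K$ because $\sigma_{31}(i)=-i$, and $H_K/K$ has odd degree, so $i\notin N$), Harbater's uniqueness cannot exclude $L\supsetneq N$; and a $2$-group acting on the $17$ non-CM Weierstrass points is only forced to have a fixed point, not to act trivially. So ``being Galois and containing $N$, it must equal $N$'' does not follow. The paper sidesteps this by arguing directly with the Weierstrass points: in the proof of Theorem~\ref{thm:C-is-hyperelliptic} it asserts that the $\Gal(\overline{F}/F)$-action on all of $\mathscr{W}$ factors through $\Gal(M/F)\simeq D_{17}$, and the proof of the present theorem then just takes $N$ to be the normal closure of $M$ over $\Q$. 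To close your argument you need an independent reason that the $17$ non-CM Weierstrass points are also defined over $N$ (equivalently, that $\Gal(L/N)$ acts trivially on $\mathscr{W}$), not merely the isogeny-level statement of Theorem~C.
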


\begin{proof} Keeping the notation in the proof of Theorem~\ref{thm:C-is-hyperelliptic}, $N$
is the normal closure of $M$. It follows from this, and direct calculations, that $\Gal(N/\Q) \simeq F_{17}$.
By construction, $N$ is unramified outside $2$ and $\infty$. However, by~\cite[Theorem 2.25]{har94}, there is a unique
Galois number field unramified outside $2$ and $\infty$, with Galois group $F_{17}$. So $N$ must be the Harbater field.
\end{proof}

\begin{rem}\rm The field $N$ is the splitting field of the polynomial
\begin{align*}
H &:= x^{17} - 2x^{16} + 8x^{13} + 16x^{12} - 16x^{11} + 64x^9 - 32x^8 - 80x^7 + 32x^6 + 40x^5\\
&\qquad{} + 80x^4 + 16x^3 - 128x^2 - 2x + 68. 
\end{align*}
This polynomial was computed by Noam Elkies following a \verb|mathoverflow.net|~\cite{er15} discussion initiated by Jeremy Rouse.
We thank David P. Roberts for bringing this discussion to our attention. 
\end{rem}

\subsection{The mod $2$ Hecke eigensystems} 
Let $\T_f$, $\T_{f'}$ $\T_g$, $\T_{g'}$ and $\T_h$ be the $\Z$-subalgebras acting 
on the Hecke constituents of $f$, $f'$, $g$, $g'$ and $h$ respectively. From the discussion in Subsection~\ref{subsec:newforms}, we have 
\begin{align*}
\T\otimes\Q &= (\T_f \otimes \Q) \times (\T_{f'} \otimes \Q) \times (\T_g \otimes \Q) \times (\T_{g'} \otimes \Q)
 \times (\T_h \otimes \Q)\\
&= L_f \times L_{f'} \times L_{g} \times L_{g'} \times L_{h}.
\end{align*}

By direct calculations, we get the following:
\begin{itemize}
\item $[\CO_{L_f} : \T_f] = [\CO_{L_{f'}} : \T_{f'}]$ divides $3$,
\item $[\CO_{L_g} : \T_g]  = [\CO_{L_{g'}} : \T_{g'}] = 1$,
\item $[\CO_{L_h} : \T_h]$ divides $3\cdot 5^6$.
\end{itemize}
Therefore $\T \otimes \Z_2$ decomposes into $\Z_2$-algebras as
$$\T \otimes \Z_2 = (\T_f \otimes \Z_2) \times (\T_{f'} \otimes \Z_2) \times (\T_g \otimes \Z_2) \times (\T_{g'} \otimes \Z_2)
\times (\T_h \otimes \Z_2).$$
The prime $2$ is inert in $L_f = L_{f'}$, and $L_g = L_{g'}$, so the first four factors are local $\Z_2$-algebras. 
Let $\gm_f$, $\gm_{f'}$, $\gm_g$ and $\gm_{g'}$ be the corresponding maximal ideals. Then, by the identities in Table~\ref{table:eigenforms}, 
we have $\sigma(\gm_f) = \gm_{f'}$ and $\sigma^2(\gm_f) = \tau_f(\gm_f)$ for some $\tau_f \in \Gal(\F_{16}/\F_2)$; and $\sigma(\gm_g) = \gm_{g'}$ 
and $\sigma^2(\gm_g) = \tau_g(\gm_g)$ for some $\tau_g \in \Gal(\F_{16}/\F_2)$. We let $\theta_f, \theta_{f'}, \theta_g, \theta_{g'}:\,\T\otimes \Z_2 \to \F_{16}$ 
be  the corresponding mod $2$ Hecke eigensystems.

Next, we recall that $L_h$ is the ray class field of conductor $\gc = (\frac{1}{2}(c^2 - 16c + 25))$ over the field $K_h = \Q(c)$, with $c^3 + c^2 - 229c + 167 = 0.$
The prime $2$ is totally ramified in $K_h$. Letting $\gp_2$ be the unique prime above it, we get that $\gp_2 = \gP \gP'$, where $\gP$ and $\gP'$ are inert primes,
and $\tau(\gP) = \gP'$. Therefore, there are two maximal ideals $\gm_{h}$ and $\gm_{h}'$ in $\T_h \otimes \Z_2$ such that 
$\sigma(\gm_{h}) = \gm_{h}'$ and $\sigma^2(\gm_{h}) = \tau_h(\gm_{h})$. We let $\theta_h, \theta_h':\,\T\otimes \Z_2 \to \F_{16}$ be
the resulting two mod $2$ Hecke eigensystems. 
 
\begin{prop}\label{prop:mod-2-constituents} The forms $f$, $f'$, $g$, $g'$ and $h$ give rise to two mod $2$ Hecke eigensystems $\theta$ and $\theta'$ that
$\theta' = \theta \circ \sigma$ and $\theta \circ \sigma^2 = \bar{\tau} \circ \theta$, where $\Gal(\F_{16}/\F_2)=\langle \bar{\tau}\rangle$. Up to relabelling, 
we have $\theta = \theta_f = \theta_g = \theta_h$, and $\theta' = \theta_{f'} = \theta_{g'} = \theta_{h}'$.
\end{prop}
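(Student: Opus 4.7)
The plan splits naturally into a Galois-action step and a congruence step.

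For the Galois action, start from the relations in Table~\ref{table:eigenforms}. The identity ${}^\sigma f = f'$ reads $a_\gq(f') = \sigma_\ast(a_{\sigma^{-1}(\gq)}(f))$ at every unramified prime $\gq$ of $F$, where $\sigma_\ast$ denotes the induced action on the coefficient field $L_f$. Combined with the compatibility $\sigma(\gm_f) = \gm_{f'}$ already observed in the preceding discussion, reducing modulo $\gm_f$ yields $\theta_{f'} = \theta_f \circ \sigma$. The identity ${}^{\sigma^2}\! f = f^\tau$, combined with $\sigma^2(\gm_f) = \tau_f(\gm_f)$, gives $\theta_f \circ \sigma^2 = \bar\tau \circ \theta_f$ with $\bar\tau$ generating $\Gal(\F_{16}/\F_2)$. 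The same manipulation works verbatim for the pairs $(g, g')$ and $(h, h')$, producing the desired Galois permutation structure on the six eigensystems.

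What remains is the substantive content: to show $\theta_f = \theta_g = \theta_h$ for a suitable relabelling, whence $\theta_{f'} = \theta_{g'} = \theta_h'$ follows by applying $\sigma$. I would proceed in the same style as the preceding mod 3 and mod 5 propositions: form the three residual representations $\bar\rho_{f,2}, \bar\rho_{g,2}, \bar\rho_{h,2}:\,\Gal(\overline F/F) \to \GL_2(\F_{16})$ obtained by reducing the $2$-adic Galois representations attached to $f$, $g$, $h$ modulo $\gm_f$, $\gm_g$, $\gm_h$ respectively (the target $\GL_2(\F_{16})$ is forced by the residue-field calculations already made for these ideals), verify that each is absolutely irreducible with image contained in $D_{17} \subset \GL_2(\F_{16})$, and then invoke the multiplicity one argument of~\cite[\S 6]{bcddf18} to conclude pairwise isomorphism. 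Agreement of the representations yields equality of the eigensystems on all of $\T$.

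The main obstacle will be the explicit Hecke-eigenvalue tabulation required to apply multiplicity one. In the mod $3$ and mod $5$ cases the argument was streamlined because $\bar\rho$ could be identified as a base change from a proper subfield of $F$ ($\Q(\zeta_{16})^+$ and $\Q(\sqrt{2})$ respectively), sharply cutting down the space of competing residual representations. For mod $2$ the natural ambient image $D_{17} \subset \GL_2(\F_{16})$ is considerably richer, and no obvious base-change shortcut is available; one must compare Hecke eigenvalues of $f$, $g$ and $h$ at enough split primes of small norm to separate all residual representations of $\Gal(\overline F/F)$ into $\GL_2(\F_{16})$ unramified outside $\gp$ that could a priori arise. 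This is a finite but nontrivial computation with the Hilbert Modular Forms package in \verb|Magma|, made heavier by the fact that the coefficient field of $h$ has degree $24$.
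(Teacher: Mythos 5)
Your Galois-action step coincides with the paper's: the identities $\theta' = \theta\circ\sigma$ and $\theta\circ\sigma^2 = \bar{\tau}\circ\theta$ are indeed read off from the relations ${}^{\sigma}\!f = f'$, ${}^{\sigma^2}\!f = f^\tau$ (and their analogues for $g$ and $h$) together with the behaviour of the maximal ideals above $2$, exactly as in the paper's closing sentence. For the substantive step you pick the right tool, the multiplicity one argument of~\cite[\S 6]{bcddf18}, but you organize it differently and your final paragraph miscalibrates what has to be computed. The paper does not pass through residual Galois representations here at all (irreducibility and the dihedral image $D_{17}$ are only established later, in Proposition~\ref{prop:dihedral-rep}, and are not inputs to this proposition); it works directly with the Hecke module. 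Concretely: letting $\T'$ be the subalgebra generated by the $T_\gq$ with $\N\gq\le 1000$, a \verb|Magma| computation shows that the socle of $\T\otimes\F_2$ as a $\T'\otimes\F_2$-module has exactly two irreducible constituents, each of dimension $4$ and multiplicity one, each splitting into four one-dimensional $\F_{16}$-valued eigensystems. Since all six eigensystems $\theta_f,\theta_{f'},\theta_g,\theta_{g'},\theta_h,\theta_h'$ occur in this socle, they are forced into the two available $\Gal(\F_{16}/\F_2)$-orbits, and a direct comparison of eigenvalues fixes the labelling. In particular, there is no need to ``separate all residual representations of $\Gal(\overline{F}/F)$ into $\GL_2(\F_{16})$ unramified outside $\gp$'', which is what your last paragraph contemplates; that would be a much harder classification problem, and it is not how the multiplicity one argument is deployed. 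The decisive finite computation is a socle computation for the single $40$-dimensional module at hand, and the absence of a base-change shortcut (which you correctly note distinguishes the mod $2$ case from the mod $3$ and mod $5$ ones) is compensated precisely by that module-theoretic bound on the number of mod $2$ eigensystems, not by an exhaustive comparison against all conceivable residual representations.
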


\begin{proof}
We will apply the multiplicity one argument in~\cite[\S 6]{bcddf18} to deduce that, up to relabelling, $\theta_f = \theta_g = \theta_h$, and 
$\theta_{f'} = \theta_{g'} = \theta_{h}'$. Let $M$ be the underlying $\F_2$-module to $\T \otimes \F_2$. Then, the pair $(\theta_f,\theta_{f'})$
comes from two simple Hecke constituents of dimension $4$ over $\F_2$ that are conjugate by the action of $\Gal(F/\Q)$. These Hecke constituents
belong to the socle $S$ of $M$, i.e. the largest semi-simple $\T\otimes\F_2$-submodule of $M$. Likewise for the pairs $(\theta_g,\theta_{g'})$ and
$(\theta_h,\theta_{h}')$. Let $\T'$ be the $\Z$-subalgebra of $\T$ generated by the Hecke operators $T_\gp$, with $\N\gp \le 1000$.
We view $M$ as a $\T'\otimes \F_2$-module, and let $S'$ be its socle. By direct calculations in \verb|magma|, we show that $S'$ has two irreducible constituents,
and each constituent has dimension $4$ and multiplicity one. Furthermore, each of those constituents decomposes into $4$ one-dimensional 
Hecke constituents over $\T'\otimes \F_{16}$. This means that $S'$ must necessarily be the socle of $M$ viewed as a $\T\otimes \F_2$-module, and that
its $\T'\otimes \F_{16}$-decomposition is also the $\T\otimes \F_{16}$-decomposition of $S$. By comparing these one-dimensional $\F_{16}$-valued
Hecke eigensystems with the reduction modulo $2$ of the Hecke eigenvalues of the newforms in $S_2^D(1)$, we see that $\theta_f = \theta_g = \theta_h$, 
and $\theta_{f'} = \theta_{g'} = \theta_{h}'$, up to relabelling. The identities $\theta' = \theta \circ \sigma$ and $\theta \circ \sigma^2 = \bar{\tau} \circ \theta$
follow from the relations between the forms.
\end{proof}

\subsection{The fields of $2$-torsion of the simple factors of $\Jac(X_0^D(1))$}

\begin{thm}\label{thm:2-torsion-fields}
Let $A$ be a simple factor of $\Jac(X_0^D(1))$, and $M = F(A[2])$ the field of $2$-torsion of $A$.
Then, the normal closure of $M$ is the Harbater field $N$.
\end{thm}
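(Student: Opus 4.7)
The plan is to combine Proposition~\ref{prop:mod-2-constituents} with Theorem~\ref{thm:harbater-field}, so that the assertion for any individual simple factor is reduced to the already-established statement for $\Jac(C)$ via Galois conjugation and a mod-$2$ multiplicity-one argument.

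First I would handle the four-dimensional factors $A\in\{A_f,A_{f'},A_g,A_{g'}\}$. Because $2$ is inert in each of the quartic coefficient fields $L_f=L_{f'}=\Q(\zeta_{15})^+$ and $L_g=L_{g'}$ with residue field $\F_{16}$, the group $A[2]$ is a free $\F_{16}$-module of rank $2$, and the $\Gal(\overline{F}/F)$-action is encoded by a residual representation $\bar{\rho}_A:\Gal(\overline{F}/F)\to\GL_2(\F_{16})$ whose kernel cuts out $F(A[2])$. The image of $\bar{\rho}_f$ in $\PGL_2(\F_{16})$ contains the dihedral group $D_{17}=\Gal(M/F)$ extracted from the CM Weierstrass-point analysis in the proof of Theorem~\ref{thm:C-is-hyperelliptic}, which acts without invariant line and is therefore absolutely irreducible. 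Proposition~\ref{prop:mod-2-constituents} now gives $\theta_f=\theta_g=:\theta$ and $\theta_{f'}=\theta_{g'}=\theta'=\theta\circ\sigma$, so Brauer--Nesbitt combined with Chebotarev forces $\bar{\rho}_f\simeq\bar{\rho}_g$ and $\bar{\rho}_{f'}\simeq\bar{\rho}_{g'}$. Consequently, $F(A_f[2])=F(A_g[2])=:M_\theta$ and $F(A_{f'}[2])=F(A_{g'}[2])=\sigma(M_\theta)=:M_{\theta'}$.

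Next I would exploit the decomposition $F(\Jac(C)[2])=M_\theta\cdot M_{\theta'}$ coming from \eqref{eq:jac-decomp2}. Because $\sigma\in\Gal(F/\Q)$ carries $M_\theta$ to $M_{\theta'}$, the normal closure of $M_\theta$ over $\Q$ automatically contains $M_{\theta'}$, hence contains $F(\Jac(C)[2])$. Conversely $M_\theta\subset F(\Jac(C)[2])$, so the two normal closures over $\Q$ coincide, and Theorem~\ref{thm:harbater-field} identifies them with the Harbater field $N$, settling the four fourfold cases. For the $24$-dimensional factor $A_h$, the ring $\CO_{L_h}\otimes\Z_2$ splits into two local $\Z_2$-algebras indexed by the primes $\gP,\gP'$ of $L_h$ above $2$ described in Subsection~\ref{subsec:newforms}. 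Correspondingly $A_h[2]=A_h[\gP]\oplus A_h[\gP']$ as $\Gal(\overline{F}/F)$-modules, each summand a free $\F_{16}$-module of rank $2$. Proposition~\ref{prop:mod-2-constituents} identifies the two residual representations $\bar{\rho}_{h,\gP}$ and $\bar{\rho}_{h,\gP'}$ with $\theta$ and $\theta'$, and the same Brauer--Nesbitt step yields $\bar{\rho}_{h,\gP}\simeq\bar{\rho}_f$ and $\bar{\rho}_{h,\gP'}\simeq\bar{\rho}_{f'}$. Therefore $F(A_h[2])=M_\theta\cdot M_{\theta'}=F(\Jac(C)[2])$, and again its normal closure over $\Q$ is $N$.

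The principal obstacle in this outline is the rigidification step: passing from equality of mod-$2$ Hecke eigensystems (Proposition~\ref{prop:mod-2-constituents}) to a genuine isomorphism of residual Galois representations. This requires the absolute irreducibility of $\bar{\rho}_f$, which does not follow from the trace data alone and must be imported from the geometric input, namely the dihedral structure on the set of CM Weierstrass points established in the proof of Theorem~\ref{thm:C-is-hyperelliptic}. Once this is in place, the rest of the argument is a routine combination of Galois descent and the already-proved Theorem~\ref{thm:harbater-field}.
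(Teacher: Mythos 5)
Your overall strategy coincides with the paper's first proof of Theorem~\ref{thm:2-torsion-fields}: reduce everything to the simple factors of $\Jac(C)$ via Proposition~\ref{prop:mod-2-constituents}, identify the $2$-torsion fields of $A_f,A_g$ (resp.\ $A_{f'},A_{g'}$) with a single field $M_\theta$ (resp.\ $M_{\theta'}=M_\theta^\sigma$), and invoke Theorem~\ref{thm:harbater-field}. The rigidification worry you single out is legitimate, and the way the paper resolves it is worth knowing: Proposition~\ref{prop:dihedral-rep} shows (via Serre's conjecture in the totally ramified case and a level--weight analysis) that $\bar{\rho}_\theta$ and $\bar{\rho}_{\theta'}$ are induced from unramified characters of $K$ of order $17$, so their images are $D_{17}$ and irreducibility is automatic; this is cleaner than trying to extract irreducibility from the action of $\Gal(\overline{F}/F)$ on the CM Weierstrass points, which only controls $\Jac(C)[2]$ as a whole and does not obviously survive projection to one four-dimensional factor.

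There are, however, two concrete problems. First, your treatment of $A_h$ is wrong as stated: since $2$ is totally ramified in $K_h$ and $\gp_2=\gP\gP'$ with $\gP,\gP'$ of residue degree $4$, one has $2\CO_{L_h}=(\gP\gP')^3$, so $A_h[2]$ has $\F_2$-dimension $48$ while $A_h[\gP]\oplus A_h[\gP']$ has dimension $16$; the equality $A_h[2]=A_h[\gP]\oplus A_h[\gP']$ cannot hold, and $F(A_h[2])$ may strictly contain $M_\theta M_{\theta'}$ because of the extension classes among the three copies of $\bar{\rho}_\theta$ (resp.\ $\bar{\rho}_{\theta'}$) inside $A_h[\gP^3]$ (resp.\ $A_h[\gP'^3]$). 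Second, your closing step needs the containment $F(\Jac(C)[2])\subseteq M_\theta M_{\theta'}$, i.e.\ that $\Jac(C)[2]$ contributes nothing beyond its Jordan--H\"older constituents; this is not automatic, since the isogeny decomposition~\eqref{eq:jac-decomp2} does not respect $2$-torsion and $\Jac(C)[2]$ need not be semisimple. The paper closes differently and more robustly: after arranging $M_\theta\subseteq N$, it observes that $17$ divides the order of $\Gal(N_\theta/\Q)$ (from the Frobenius data of $\theta$), so $[N:N_\theta]$ divides $16$, and since $F_{17}$ has no non-trivial normal subgroup of order dividing $16$ one gets $N_\theta=N$ from a single containment. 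Replacing your two-sided compositum argument by this group-theoretic step repairs both issues at once, since it only requires that each simple factor's residual representation realizes the eigensystem $\theta$ or $\theta'$ inside $N$.
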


We will give two proofs of this result, starting with the simplest one.

\begin{proof}[First proof of Theorem~\ref{thm:2-torsion-fields}] In light of Proposition~\ref{prop:mod-2-constituents},
it is enough to prove this for the simple factors of $\Jac(C)$. To this end, recall that
\begin{align*}
\Jac(C) \sim A_f \times A_{f'} \times A_g \times A_{g'}.
\end{align*}
By Theorem~\ref{thm:harbater-field}, we know that the field of $2$-torsion of $\Jac(C)$ is the Harbater field.
So it is enough to show that the compositum of the fields of $2$-torsion of its simple factors is also the Harbater field.
But, again by Proposition~\ref{prop:mod-2-constituents}, $A_f$ and $A_g$ have the same field of $2$-torsion.
It is the field $M_\theta$ cut out by the Galois representation attached to the Hecke eigensystem $\theta$. Similarly,
$A_{f'}$ and $A_{g'}$ have the same field of $2$-torsion, the field $M_{\theta'}$ cut out by the Galois representation 
attached to $\theta'$. Since $\theta$ and $\theta'$ are interchanged by $\Gal(F/\Q)$, we must have 
$M_{\theta'} = M_\theta^\sigma$. Therefore $M_\theta$ and $M_{\theta'}$ have the same normal closure $N_\theta = N_{\theta'}$.
By replacing the isogeny $\phi_f:\,\Jac(C) \to A_f$ if necessary, we can assume that $M_\theta$, and hence $N_\theta$,
is a subfield of $N$. From the Frobenius data attached to $\theta$, we see that the order of $\Gal(N_\theta/\Q)$ is divisible by
$17$, hence $[N:N_\theta] \mid 16$. Since $\Gal(N/\Q) \simeq F_{17}$ has no non-trivial normal 
subgroup whose order divides $16$, we conclude that $N_\theta = N$.
\end{proof}

For the second proof, we need the following result. 

\begin{prop}\label{prop:dihedral-rep} 
Let $\theta$ and $\theta'$ be the Hecke eigensystems in Proposition~\ref{prop:mod-2-constituents}, and 
$\bar{\rho},\, \bar{\rho}': \Gal(\Qbar/F) \to \GL_2(\F_{16})$ the mod $2$ Galois representations attached to them.
Then, there are characters $\chi, \chi': \Gal(\Qbar/K) \to \F_{2^8}^\times$, with trivial conductor such that $\bar{\rho} = \Ind_{K}^F \chi$, and 
$\bar{\rho}' = \Ind_{K}^F \chi'$. 
\end{prop}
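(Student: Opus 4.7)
The plan is to pin down the image of $\bar{\rho}$ inside $\GL_2(\F_{16})$ as the full dihedral group $D_{17}$, and then to invoke the classical fact that any faithful two-dimensional representation of $D_n$ with $n$ odd and characteristic coprime to $n$ is induced from a character of its unique index-two cyclic subgroup. Applying this to the cyclic subgroup $\Gal(H_K/K) \simeq \Cl(K) \simeq \Z/17\Z$ will produce the character $\chi$, with its ramification and field of values essentially forced.

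For the image identification I would first observe that, in the notation of the proof of Theorem~\ref{thm:C-is-hyperelliptic}, the field $M$ coincides with $H_K$ itself: both $K/\Q$ and $H_K/K$ are Galois (the latter canonically, so stable under every $\tau \in \Gal(\Qbar/F)$), hence $H_K/F$ is already Galois of degree $34$. Consequently $H_K/\Q$ is Galois of degree $272 = |F_{17}|$, and the identification $M = N$ inside $\Qbar$ follows. Let $M_{\bar{\rho}}$ denote the fixed field of $\ker(\bar{\rho})$. Theorem~\ref{thm:2-torsion-fields} yields $M_{\bar{\rho}} \subseteq N = M$, so the image of $\bar{\rho}$ is a quotient of $\Gal(M/F) \simeq D_{17}$. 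The normal subgroups of $D_{17}$ have indices $1$, $2$, or $34$; the first two would force the normal closure of $M_{\bar{\rho}}/\Q$ to have degree at most $16$, contradicting Theorem~\ref{thm:2-torsion-fields}. So $\bar{\rho}$ surjects onto $D_{17}$.

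Any two-dimensional representation of $D_{17}$ in characteristic $\neq 17$ is, by Mackey's criterion, absolutely irreducible and isomorphic to $\Ind_{C_{17}}^{D_{17}}\psi$ for some nontrivial character $\psi$ of the cyclic subgroup. In our situation this cyclic subgroup corresponds to $\Gal(M/K) = \Gal(H_K/K) \simeq \Cl(K)$, so $\bar{\rho} \simeq \Ind_K^F \chi$ for a character $\chi \colon \Gal(\Qbar/K) \to \overline{\F}_2^{\times}$ factoring through $\Gal(H_K/K)$. The conductor of $\chi$ is trivial because $H_K/K$ is unramified at every finite prime by construction of the Hilbert class field. Since the order of $2$ modulo $17$ is $8$, the image of $\chi$ lies in $\mu_{17}(\overline{\F}_2) \subset \F_{2^8}^{\times}$, but in no smaller subfield, giving the stated target. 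The argument for $\bar{\rho}'$ is identical: Theorem~\ref{thm:2-torsion-fields} forces its image to be $D_{17}$ as well, so $\bar{\rho}' \simeq \Ind_K^F \chi'$ with $\chi'$ factoring through $\Gal(H_K/K)$; concretely one may take $\chi' = \chi^{\sigma}$ for the generator $\sigma \in \Gal(F/\Q)$ of Proposition~\ref{prop:mod-2-constituents}, which stabilises $K$ since $K/\Q$ is Galois.

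The main obstacle is the first step, converting the field-theoretic conclusion of Theorem~\ref{thm:2-torsion-fields} into the group-theoretic surjectivity of $\bar{\rho}$ onto $D_{17}$. The key input is the observation that $H_K$ is itself Galois over $\Q$ and hence equals the Harbater field $N$; once this is in hand, the dihedral structure of the image dictates the induced shape of the representation, and the ramification behaviour of the Hilbert class field together with the arithmetic of $2$ modulo $17$ handles the remaining assertions.
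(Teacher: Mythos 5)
Your argument is correct in substance, but it reaches the proposition by a genuinely different route from the paper. You work backwards from Theorem~\ref{thm:2-torsion-fields}: since the normal closure of the $2$-torsion field is $N$, and $N$ equals the Hilbert class field $H_K$ (your observation that $H_K/\Q$ is already Galois of degree $272$ is correct and worth making explicit), the representation $\bar{\rho}$ factors through $\Gal(H_K/F)\simeq D_{17}$ and must surject onto it, because the only other quotients of $D_{17}$ have order $1$ or $2$ and would give a normal closure over $\Q$ of degree at most $16$. The classical description of irreducible two-dimensional representations of a dihedral group of odd order prime to the characteristic then yields $\bar{\rho}\simeq \Ind_K^F\chi$ with $\chi$ factoring through $\Cl_K\simeq\Z/17\Z$, hence of trivial conductor and valued in $\F_{2^8}^\times$ because $2$ has order $8$ modulo $17$. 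The paper argues in the opposite direction and uses no information about $2$-torsion fields: it notes that $\theta$ has coefficient field $\F_{16}$ whereas the level-one mod $2$ eigensystems computed in~\cite{dem09} have coefficient fields $\F_{2^s}$ with $s=1,2,8$, so by the weight part of Serre's conjecture in the totally ramified case~\cite{gs11} the eigensystem $\theta$ occurs in level $(1)$ and non-trivial weight; it then constructs the unramified characters of $K$ directly from $\Cl_K$, checks that the induced representations also have level $(1)$, non-trivial weight and coefficients in $\F_{16}$, and matches the two lists using the multiplicity-one analysis of Proposition~\ref{prop:mod-2-constituents}. Your approach is more elementary and purely Galois-theoretic; the paper's approach is automorphic and constructive, identifying $\bar{\rho}$ among a finite list of eigensystems.

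Two caveats. First, Proposition~\ref{prop:dihedral-rep} exists in the paper precisely to feed the \emph{second} proof of Theorem~\ref{thm:2-torsion-fields}; since your proof takes that theorem as input, it is non-circular only because the paper's first proof of the theorem is independent of this proposition, and your argument could not replace the paper's proof without collapsing the two proofs of Theorem~\ref{thm:2-torsion-fields} into one. Second, the representation-theoretic fact you invoke should be stated for \emph{faithful} (equivalently, here, irreducible) two-dimensional representations of $D_{17}$ rather than for all of them; your surjectivity step does supply the needed hypothesis, so this is only a matter of phrasing.
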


\begin{proof} We already computed the Hecke constituents of the space $S_2(1)$ in~\cite{dem09}. The mod $2$ Hecke eigensystems in that case
have coefficient fields $\F_{2^s}$ where $s = 1, 2, 8$. Therefore, since $\theta$ has coefficient field $\F_{16}$, it cannot arise from an eigenform of level $1$. 
By the Serre conjecture for totally real fields (the totally ramified case)~\cite{gs11}, it must appear on the quaternion algebra $D'$ with level $(1)$ and non-trivial 
weight. The same is true for $\theta'$. In fact, the analysis conducted in the proof of Proposition~\ref{prop:mod-2-constituents} also shows that they are the only 
eigensystems that can appear at that weight. (We note that there are only two Serre weights in this case.)

\medskip
Let $\chi:\, \Gal(\Qbar/K) \to \overline{\F}_2^\times$ be a character with trivial conductor such that $\chi^s \neq \chi$, where $\Gal(K/F) = \langle s \rangle$. 
By class field theory, we can identity $\chi$ with its image under the Artin map. Since $\chi$ is unramified, it must factor as 
$\chi: K^\times \backslash \A_K^\times \twoheadrightarrow \Cl_K \to \overline{\F}_2^\times$.
Furthermore, since $\Cl_K \simeq \Z/17\Z$, we must have 
$\chi: K^\times \backslash \A_K^\times \to \F_{2^8}^\times$, and the representation $\bar{\rho}_{\chi}:=\Ind_K^F \chi:\, \Gal(\Qbar/F) \to \GL_2(\F_{16})$ has 
coefficients in $\F_{16}$. So, $\bar{\rho}_{\chi}$ has level $(1)$ and non-trivial weight by the argument above. Therefore, it must be isomorphic to a 
Galois conjugate of $\bar{\rho}$. Up to relabelling, we can assume that 
$\bar{\rho} \simeq \bar{\rho}_{\chi}$. Since $\theta$ and $\theta'$ are $\Gal(F/\Q)$-conjugate, there is also a character 
$\chi': K^\times \backslash \A_K^\times \to \F_{2^8}^\times$ such that $\bar{\rho}' \simeq \bar{\rho}_{\chi'}$. 

\medskip
Alternatively, we can show that $\theta$ appears on $D'$ with the non-trivial weight without
using the fact that it has coefficients in $\F_{16}$. Indeed, we have 
$$\bar{\rho}_{\chi}|_{I_\gp} \simeq \begin{pmatrix} 1& \ast\\ 0 & 1\end{pmatrix}.$$
Let $K_{\gP}$ be the completion of $K$ at $\gP$, the unique prime above $\gp$. 
Since $K = F[\beta]$, and $\beta^2 = -2 - \alpha$ is a generator of $\gp$, then we have $K_{\gp} = F_\gp[\sqrt{\varpi}]$,
where $\varpi$ is a uniformiser of $F_\gp$. Therefore, $\bar{\rho}_{\chi}|_{D_\gp}$ doesn't arise from a finite flat group 
scheme. Hence, $\bar{\rho}_{\chi}$ must have non-trivial weight. 
\end{proof}

We are now ready for the second proof of Theorem~\ref{thm:2-torsion-fields}. 

\begin{proof}[Second proof of Theorem~\ref{thm:2-torsion-fields}] 
Let $\bar{\rho}_{\theta},\,\bar{\rho}_{\theta'}:\,\Gal(\Qbar/F) \to \GL_2(\F_{16})$ be the mod $2$ Galois representations attached to the
eigensystems $\theta$ and $\theta'$. By Proposition~\ref{prop:dihedral-rep}, $\bar{\rho}_{\theta}$ and $\bar{\rho}_{\theta'}$ are dihedral and we have that 
$\mathrm{im}(\bar{\rho}_{\theta}) = \mathrm{im}(\bar{\rho}_{\theta'}) = D_{17}$. Let $M_\theta, M_{\theta'}$ be the fields cut out by $\bar{\rho}_{\theta}$ 
and $\bar{\rho}_{\theta'}$; and $N_\theta$ and $N_{\theta'}$ the normal closure of $M_\theta$ and $M_{\theta'}$, respectively. 
By Proposition~\ref{prop:mod-2-constituents}, we have $M_{\theta'} = M_{\theta}^\sigma$, hence $N_\theta = N_{\theta'}$.  Also, by construction $M_\theta$ 
and $M_{\theta}^\sigma$ are unramified extension of $K$. So, by uniqueness of the Hilbert class field, we must have 
$M_\theta = M_{\theta}^\sigma = M_\theta M_{\theta}^\sigma = H_K$, where $M_\theta M_{\theta}^\sigma$ is the compositum of $M_\theta$ and 
$M_{\theta}^\sigma$; and $H_K$ is the Hilbert class field of $K$. Since $\theta \circ \sigma^2 = \bar{\tau} \circ \theta$, we have  
$$\Gal(N_\theta/\Q) = D_{17}\rtimes \Z/8\Z = F_{17}.$$
Again by~\cite[Theorem 2.25]{har94}, we must have $N = N_\theta = N_{\theta'}$.
\end{proof}

\begin{rem}\rm
From Theorem~\ref{thm:2-torsion-fields}, we see that none of the fourfolds $A_f$, $A_{f'}$, $A_g$ or $A_{g'}$ can be the 
Jacobian of a hyperelliptic curve since the action of $\Gal(\Qbar/F)$ on the points of $2$-torsion cannot factor through $S_{10}$ 
(see Subection~\ref{subsec:galois-action}). However, as we explained earlier, $A_f$, $A_{f'}$, $A_g$ and $A_{g'}$ descend, separately, 
into pairwise conjugate abelian varieties over $\Q(\sqrt{2})$. And the products $A_f \times A_{f'}$ and $A_g \times A_{g'}$ are $8$-dimensional
abelian varieties which further descend to $\Q$. So, we conclude with the following questions. 
Do there exist hyperelliptic curves $C_f$ and $C_g$ defined over $F$ such that, we have
$$\Jac(C_f) \sim A_f \times A_{f'},\,\text{and}\,\, \Jac(C_g) \sim A_g \times A_{g'}?$$
If so, do these two curves descend to $\Q$ as well? We were asked these two questions by Noam Elkies in an email. An affirmative answer to 
them would mean that the Harbater field is given by hyperelliptic curves of genus $8$, which is much smaller.  A priori, the hyperelliptic
polynomials of these curves should have degree $18$. However, the structure of the Galois group $\Gal(N/\Q) = F_{17}$ indicates that one of their
roots would be rational and could be moved to $\infty$. This means that the hyperelliptic polynomials of the curves $C_f$ and $C_g$ would in fact
have degree $17$, the same as that of the Elkies polynomial displayed earlier.
\end{rem}

\newcommand{\etalchar}[1]{$^{#1}$}
\providecommand{\bysame}{\leavevmode\hbox to3em{\hrulefill}\thinspace}
\providecommand{\MR}{\relax\ifhmode\unskip\space\fi MR }
\providecommand{\MRhref}[2]{%
  \href{http://www.ams.org/mathscinet-getitem?mr=#1}{#2}
}
\providecommand{\href}[2]{#2}

\end{document}